\let\oldsqrt\sqrt
\def\sqrt{\mathpalette\DHLhksqrt}
\def\DHLhksqrt#1#2{%
\setbox0=\hbox{$#1\oldsqrt{#2\,}$}\dimen0=\ht0
\advance\dimen0-0.2\ht0
\setbox2=\hbox{\vrule height\ht0 depth -\dimen0}%
{\box0\lower0.4pt\box2}}
\newcommand{\R}{\mathbb{R}} 
\newcommand{\N}{\mathbb{N}} 
\newcommand{\dist}{\textnormal{dist}} 
\newcommand{\diam}{\textnormal{diam}} 
\newcommand{\supp}{\textnormal{supp}} 
\DeclareMathOperator{\essinf}{essinf} 
\renewcommand{\phi}{\varphi}
\newcommand{\cC}{{\mathcal C}}
\newcommand{\cD}{{\mathcal D}}
\newcommand{\cF}{{\mathcal F}}
\newcommand{\cH}{{\mathcal H}}
\newcommand{\cJ}{{\mathcal J}}
\newcommand{\cV}{{\mathcal V}}
\theoremstyle{definition}
\newtheorem{defi}{Definition}[section]
\newtheorem{bem}[defi]{Remark}
\newtheorem{bsp}[defi]{Example}
\theoremstyle{plain} 
\newtheorem{satz}[defi]{Theorem}
\newtheorem{prop}[defi]{Proposition}
\newtheorem{lemma}[defi]{Lemma}
\theoremstyle{definition}
\numberwithin{equation}{section}
\title{Symmetry of solutions to nonlocal nonlinear boundary value problems in radial sets}
\author{Sven Jarohs\footnote{Goethe-Universit\"at, Frankfurt, jarohs@math.uni-frankfurt.de.}
}
\date{\today}
\begin{document}
\maketitle
\begin{abstract}
For open radial sets $\Omega\subset \R^N$, $N\geq 2$ we consider the nonlinear problem 
\[
 (P)\qquad\left\{\begin{aligned}
  I u&=f(|x|,u)&& \text{ in $\Omega$,}\\
	u&\equiv 0 &&\text{ on $\R^{N}\setminus \Omega$,}\\
\lim_{|x|\to\infty}u(x)&=0,&&
 \end{aligned}\right.
\]
where $I$ is a nonlocal operator and $f$ is a nonlinearity. Under mild symmetry and monotonicity assumptions on $I$, $f$ and $\Omega$ we show that any continuous bounded solution of $(P)$ is axial symmetric once it satisfies a simple reflection inequality with respect to a hyperplane. In the special case where $f$ does not depend on $|x|$, we show that any nonnegative nontrivial continuous bounded solution of $(P)$ in $\R^N$ is radially symmetric (up to translation) and strictly decreasing in its radial direction. Our proves rely on different variants of maximum principles for antisymmetric supersolutions which can be seen as extensions of the results in \cite{JW14}. As an application, we prove an axial symmetry result for minimizers of an energy functional associated to $(P)$.
\end{abstract}

{\footnotesize
\begin{center}
\textit{Keywords.} nonlocal operators $\cdot$ maximum principles $\cdot$ symmetries
\end{center}
\begin{center}
\end{center}
}
\section{Introduction}
In this work we study a class of nonlocal and nonlinear boundary value problems in an open set $\Omega\subset \R^N$. To be precise, we study for $s\in(0,1)$ bounded continuous solutions of the problem  
\[
 (P)\qquad\left\{\begin{aligned}
  (-\Delta)^s u&=f(|x|,u)&& \text{ in $\Omega$;}\\
	u&\equiv 0&& \text{ on $\R^{N}\setminus \Omega$;}\\
\lim_{|x|\to\infty}u(x)&=0,&&
 \end{aligned}\right.
\]
with a nonlinearity $f: [0,\infty)\times \R\to\R$. Here $(-\Delta)^s$ is the fractional Laplacian which is usually defined via the Fourier transform as
\[
 \cF((-\Delta)^su)(\xi)=|\xi|^{2s}\cF(u)(\xi) \quad\text{  for $\xi \in \R^N$ and $u\in \cC^2_c(\R^N)$.}
\]
Note that $(-\Delta)^su$ can be represented via the following principle value integral
\[
 (-\Delta)^su(x)=c_{N,s} P.V. \int_{\R^{N}} \frac{u(x)-u(y)}{|x-y|^{N+2s}}\ dy:= c_{N,s}\lim_{\epsilon\to0}\int_{|x-y|>\epsilon}\frac{u(x)-u(y)}{|x-y|^{N+2s}}\ dy,
\]
where $c_{N,s}= s(1-s)\pi^{-N/2}4^{s}\frac{\Gamma(\frac{N+2s}{2})}{\Gamma(1-s)}$ is a normalization constant. In the following we will work with a weak formulation of solutions, i.e. a function $u\in \cH^s_0(\Omega):=\{u\in H^s(\R^N)\;:\; u\equiv 0\; \text{on $\R^N\setminus \Omega$}\}$ is called a solution of $(P)$, if for all $\varphi\in\cH^s_0(\Omega)$ with compact support in $\R^N$ the integral $\int_{\R^{N}}f(|x|,u(x))\varphi(x)\ dx$ exists and we have
\[
 \frac{c_{N,s}}{2}\int_{\R^N}\int_{\R^{N}}\frac{(u(x)-u(y))(\varphi(x)-\varphi(y))}{|x-y|^{N+2s}}\ dxdy=\int_{\Omega}f(|x|,u(x))\varphi(x)\ dx.
\]
Here $H^s(\R^N)=\{u\in L^2(\R^N)\;:\; \int_{\R^N}\int_{\R^{N}}\frac{(u(x)-u(y))^2}{|x-y|^{N+2s}}\ dxdy<\infty\}$ is the fractional Sobolev space of order $s$ (see \cite{NPV11} and the references in there for more details on fractional Sobolev spaces).\\

Problem $(P)$ has been studied extensively in recent years (see e.g. \cite{BB00,BLW05,Chen_Li_Ou,FW13-2,Chen_Song,CS07,CS,FLS13,JW13}). In particular, if $\Omega$ is a ball and $f$ satisfies some monotonicity assumptions in $r$, there have been several results which prove that in this case nonnegative bounded solutions of $(P)$ are radial symmetric (see e.g. \cite{BLW05,JW13,sciunzi}). The case where $\Omega$ is the exterior of a ball, radial symmetry for a related problem with a different boundary condition has been provided in \cite{SV14}. The proofs in these works are based on different variants of the moving plane method.\\

In this work we will analyze problem $(P)$ in radial sets $\Omega$, e.g. for $R>r>0$ we consider $\Omega=B_R(0)$, $\Omega=B_R(0)\setminus B_r(0)$ $\Omega=\R^N$, $\Omega=\R^N\setminus B_R(0)$, $\Omega=\R^N\setminus B_R(0)\cup B_r(0)$, etc. Moreover, we will allow the function $u$ to change sign. An approach with the moving plane method as in the above works is not possible and in general radial symmetry cannot be expected in this case. We will consider a particular kind of axial symmetry called \textit{foliated Schwarz symmetry} (see \cite{SW03}; we also refer to the general survey -- in particular Section 2.3 -- in \cite{T})\\

Let $D\subset\R^N$, $N\geq 2$ be a radial domain, $p\in S^{N-1}:=\{x\in \R^N\;:\; |x|=1\}$. A function $u:D\to \R$ is called \textit{foliated Schwarz symmetric with respect to $p$ in $D$}, if for every $r>0$ with $re_1\in D$ and $c\in \R$, the restricted superlevel set $\{x\in rS^{N-1}\;:\; u(x)\geq c\}$ is equal to $rS^{N-1}$ or a geodesic ball in the sphere $rS^{N-1}$ centered at $rp$.\\
We simply call $u$ \textit{foliated Schwarz symmetric}, if $u$ has this property for some unit vector $p\in \R^N$.\\

We will give an equivalent definition in Section \ref{foliatedsymmgoal} (see also \cite[Proposition 3.3]{SW13}) below, which we will use in our proofs. Note that if $u:\R^N\to \R$ is such that $u|_D$ is foliated Schwarz symmetric with respect to some $p$ for some radial set $D\subset \R^N$, then $u1_D$ is axially symmetric with respect to the axis $\R\cdot p$ and nonincreasing in the polar angel $\theta=\arccos (\frac{x}{|x|}\cdot p)$.

In order to prove foliated Schwarz symmetry of solutions, we will elaborate a rotating plane method for nonlocal boundary value problems. This method can be seen as a variant of the moving plane method and is used in the local case $s=1$ in \cite{P01,PW07,GPW10,DP13} to prove axial symmetry for solutions with low Morse index. Later in \cite{SW13} this method has been used to prove axial symmetry for solutions of a related local time depended problem. To start the rotating plane method one usually assumes that $u$ has the following property: 
\begin{enumerate}\label{u1}
\item[(U1)] There is $e_1\in S^{N-1}$ such that $u\geq u\circ Q_{H_{e_1}}$ with $u\not\equiv u\circ Q_{H_{e_1}}$ on $H_{e_1}:=\{x\in \R^N\;:\; x\cdot e_1>0\}$, where $Q_{H_{e_1}}$ denotes the reflection at $\partial H_{e_1}$.
\end{enumerate}

In case $\Omega$ is bounded, we have the following result.

\begin{satz}\label{result2}
Let $\Omega\subset\R^N$, $N\geq 2$ be a radial bounded set. Furthermore, assume $f$ satisfies
\begin{enumerate}
\item[(F1)] For every $K>0$ there is $L=L(K)>0$ such that
\[
\sup_{r\in[0,\infty)}|f(r,u)-f(r,v)|\leq L|u-v|\qquad \text{ for all $u,v\in[-K,K]$.}
\] 
\end{enumerate}
 Then any continuous bounded solution $u$ of $(P)$ which satisfies (U1) is foliated Schwarz symmetric.
\end{satz}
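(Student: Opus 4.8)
The strategy is the \emph{rotating plane method}: starting from the hyperplane $\partial H_{e_1}$ given by (U1), continuously rotate the reflection hyperplane and track the sign of the antisymmetric function $w_e := u - u\circ Q_{H_e}$, where $Q_{H_e}$ is the reflection at $\partial H_e$ and $H_e = \{x\cdot e>0\}$. The key point is that $w_e$ is antisymmetric with respect to $\partial H_e$ and, because $\Omega$ is radial and $f$ depends on $x$ only through $|x|$, it satisfies a linear inequality
\[
 \hl w_e = f(|x|,u) - f(|x|,u\circ Q_{H_e}) = c_e(x) w_e \quad\text{in } \Omega\cap H_e,
\]
with $c_e \in L^\infty$ by (F1) (using the $L^\infty$-bound on $u$). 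Thus one may invoke the maximum principles for antisymmetric supersolutions referenced in the excerpt (the extensions of \cite{JW14}): in a bounded region, once the measure/geometry of $\Omega\cap H_e$ is controlled one obtains $w_e\ge 0$ in $H_e$, and moreover the strong maximum principle gives $w_e>0$ in the interior or $w_e\equiv 0$.

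First I would set up the rotation: for $\vartheta\in[0,\pi]$ let $e_\vartheta$ be the unit vector obtained by rotating $e_1$ by angle $\vartheta$ in a fixed two-plane containing $e_1$, write $w_\vartheta := w_{e_\vartheta}$, and define
\[
 \vartheta^\ast := \sup\{\vartheta\in[0,\pi] : w_{\vartheta'}\ge 0 \text{ in } H_{e_{\vartheta'}} \text{ for all } \vartheta'\in[0,\vartheta]\}.
\]
By (U1) this set contains a neighbourhood of $0$ (here one uses a small-volume / narrow-region maximum principle together with continuity of $u$ and the $L^\infty$ bound, exactly as in the local rotating plane method), so $\vartheta^\ast>0$. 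Next I would show $\vartheta^\ast$ is attained, i.e. $w_{\vartheta^\ast}\ge 0$ in $H_{e_{\vartheta^\ast}}$: this follows from continuity of $\vartheta\mapsto w_\vartheta$ (uniform continuity of $u$, which is continuous and decays at infinity, plus boundedness of $\Omega$) and the closedness of the condition $\{w\ge 0\}$. By the strong maximum principle for antisymmetric supersolutions, either $w_{\vartheta^\ast}\equiv 0$ — in which case $u$ is symmetric with respect to $\partial H_{e_{\vartheta^\ast}}$ — or $w_{\vartheta^\ast}>0$ in the interior of $\Omega\cap H_{e_{\vartheta^\ast}}$.

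The crux is the \emph{opening step}: if $w_{\vartheta^\ast}>0$ in the interior and $\vartheta^\ast<\pi$, I must derive a contradiction by showing $w_\vartheta\ge 0$ for $\vartheta$ slightly past $\vartheta^\ast$. The obstacle is that $\Omega\cap H_{e_\vartheta}$ need not be a narrow region, so one cannot simply reuse the small-volume principle; instead one argues as in the moving plane method: split $\Omega\cap H_{e_\vartheta}$ into a compact "good" part $K$, where $w_{\vartheta^\ast}\ge \delta>0$ and hence $w_\vartheta\ge \delta/2>0$ for $\vartheta$ close to $\vartheta^\ast$ by uniform continuity, and a "bad" part of small measure near $\partial H_{e_{\vartheta^\ast}}$ and near $\partial\Omega$, on which the small-volume maximum principle for antisymmetric functions applies. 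One must be careful that the nonlocal operator couples these regions, so the maximum principle used must be the nonlocal antisymmetric version that tolerates a positive "tail" contribution from $K$ — this is precisely what the cited extensions of \cite{JW14} provide. Combining, $w_\vartheta\ge 0$ in $H_{e_\vartheta}$ for all $\vartheta$ in a neighbourhood of $\vartheta^\ast$, contradicting maximality; hence $\vartheta^\ast=\pi$ or $u$ is symmetric with respect to some $\partial H_{e_{\vartheta^\ast}}$.

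Finally I would translate this into foliated Schwarz symmetry. If $u$ is symmetric with respect to $\partial H_{e_{\vartheta^\ast}}$ for some $\vartheta^\ast\in(0,\pi)$, one rotates further in the same two-plane from $e_{\vartheta^\ast}$; if symmetry occurs for \emph{every} rotation plane through a common axis one concludes axial symmetry directly. Otherwise $\vartheta^\ast=\pi$, which means $w_\vartheta\ge 0$ in $H_{e_\vartheta}$ for all $\vartheta\in[0,\pi]$ in that two-plane, i.e. $u$ is monotone nonincreasing in the polar angle measured from a pole, in every direction of rotation; taking the intersection of these monotonicity statements over all two-planes containing $e_1$ (and using the equivalent characterisation of foliated Schwarz symmetry to be given in Section~\ref{foliatedsymmgoal}) yields that $u$ is foliated Schwarz symmetric with respect to a single $p\in S^{N-1}$. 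The main obstacle, as indicated, is making the opening step rigorous in the nonlocal setting, where the antisymmetric maximum principle must absorb the long-range interaction term; the role of (F1) is only to guarantee that $c_e$ in the linearised equation is a bounded coefficient so that these maximum principles apply.
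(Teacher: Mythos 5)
Your core argument --- linearizing the difference of $u$ and its reflection into an antisymmetric supersolution of $Iv=c_e(x)v$ with $c_e\in L^\infty$ via (F1), starting the rotation from $e_1$ using (U1), and handling the opening step by splitting $\Omega\cap H_{e_\vartheta}$ into a compact set where the previous reflection difference is strictly positive and a small-measure remainder on which the small-volume antisymmetric maximum principle applies --- is exactly the paper's argument (Propositions \ref{weak1}, \ref{strong1}/\ref{strong2} and the proof of Theorem \ref{schwarzsymm}).

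The gap is in the endgame. First, your case $\vartheta^\ast=\pi$ cannot occur and does not mean what you claim: $H_{e_\pi}=H_{-e_1}$ has the same boundary hyperplane as $H_{e_1}$, so $w_{e_\pi}=w_{e_1}$, and $w_{e_1}\ge 0$ on \emph{both} half-spaces together with antisymmetry forces $w_{e_1}\equiv 0$, contradicting (U1); in particular it does not encode ``monotonicity in the polar angle in every direction''. Second, and more seriously, producing \emph{one} symmetry hyperplane per two-plane $P$ is not enough to conclude. The characterization of foliated Schwarz symmetry the paper relies on (Proposition \ref{foliatedchar}, i.e.\ Proposition 3.3 of \cite{SW13}) requires, for every two-dimensional subspace $P$ containing $e_1$, \emph{two distinct} directions $p_1\neq p_2$ lying in the \emph{same connected component} of the set $M$ of dominant directions, with $u\equiv u\circ Q_{H_{p_i}}$ for both; a single symmetry axis on a circle (even combined with one-sided dominance) does not pin down a pole $p$, and ``intersecting monotonicity statements over all two-planes'' is not how the conclusion is reached. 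The fix is to rotate in \emph{both} angular directions from $e_1$, define $\varphi^+=\sup\{\varphi: e^\varphi\in M\}$ and $\varphi^-=\inf\{\varphi: e^\varphi\in M\}$, show by the strict-positivity/continuation dichotomy that $v_{e^{\varphi^\pm}}\equiv 0$ at both endpoints, and note that $e^{\varphi^+}\neq e^{\varphi^-}$ lie in the same component of $M\cap P$ by construction; only then does Proposition \ref{foliatedchar} yield a single $p$ working for all components of $\Omega$. (A minor further point: for disconnected $\Omega$ the strong maximum principle in the form of Proposition \ref{strong1} only localizes to one component, which is why the general Theorem \ref{schwarzsymm} additionally assumes $k$ strictly decreasing; for Theorem \ref{result2} this is harmless since the fractional kernel is strictly decreasing, but your write-up should acknowledge which strong maximum principle is being invoked.)
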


If $\Omega$ is unbounded we will need the following assumption on the decay of $u$:
\begin{enumerate}\label{u2}
 \item[(U2)] $\lim\limits_{|x|\to\infty}u(x)=0$.
\end{enumerate}

\begin{satz}\label{result3}
Let $\Omega\subset\R^N$, $N\geq 2$ be a radial set and let $f$ satisfy (F1) and 
\begin{enumerate}
\item[(F2)] $f(r,0)=0$ for all $r\in[0,\infty)$ and there is $\delta>0$ such that
\[
\sup_{r\in[0,\infty)}\frac{f(r,u)}{u}\leq 0\qquad\text{ for all $u\in [-\delta,\delta]$.}
\]
\end{enumerate}
Then any continuous bounded solution $u$ of $(P)$ which satisfies (U1) and (U2) is foliated Schwarz symmetric.
\end{satz}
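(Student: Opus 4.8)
The plan is to run the rotating plane method in the unbounded setting, using the decay condition (U2) to compensate for the loss of compactness. I would begin exactly as in the bounded case (Theorem \ref{result2}): for a unit vector $e\in S^{N-1}$ write $H_e=\{x\cdot e>0\}$, $Q_e$ for the reflection across $\partial H_e$, and for a solution $u$ set $v_e:=u-u\circ Q_e$, which is antisymmetric with respect to $\partial H_e$. Using the weak formulation and (F1), $v_e$ satisfies $(-\Delta)^s v_e = c_e(x) v_e$ in $H_e$ in the weak sense, where $c_e(x)=\frac{f(|x|,u(x))-f(|x|,u\circ Q_e(x))}{u(x)-u\circ Q_e(x)}$ is bounded by $L=L(\|u\|_\infty)$; note here one uses that $|x|$ is invariant under $Q_e$ since $\Omega$ is radial, so $f$ sees the same first argument on both sides. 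The goal is the standard dichotomy: for every $e$, either $v_e\equiv 0$ in $H_e$, or $v_e>0$ in $H_e$ (and, by the same argument applied to $-e$, this forces a genuine ordering). Foliated Schwarz symmetry with respect to the appropriate $p$ then follows from the equivalent characterization to be given in Section \ref{foliatedsymmgoal}, exactly as in the bounded case, once this dichotomy is available for all rotations of the initial hyperplane.

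The core of the argument is a maximum principle for antisymmetric supersolutions on the \emph{unbounded} half-space $H_e$, which is where (U2) and (F2) enter and which is the genuinely new ingredient compared to \cite{JW14}. Concretely: if $w$ is a bounded continuous antisymmetric function on $\R^N$ with $(-\Delta)^s w \ge c(x) w$ weakly in $H_e$, $w\ge 0$ on $H_e\setminus\Omega$, $w\to 0$ at infinity, and $c(x)\le 0$ for $|w(x)|$ small (which is what (F2) delivers once $u$ is small, i.e. outside a large ball, by (U2)), then $w\ge 0$ in $H_e$. I would prove this by contradiction: if $\inf_{H_e} w=-m<0$, then by (U2) the infimum is attained at some $x_0\in H_e\cap B_\rho$ for $\rho$ large; evaluating the (pointwise, by interior regularity) equation at $x_0$ and using antisymmetry to split the nonlocal integral over $H_e$ and its reflection — the kernel comparison $|x_0-y|<|x_0-Q_e y|$ for $y\in H_e$ being the key geometric fact — gives $(-\Delta)^s w(x_0)<0$ strictly unless $w$ is constant on $H_e$, while $c(x_0)w(x_0)\ge 0$ because $c(x_0)\le 0$ (the value $|w(x_0)|=m$ can itself be taken small after a first reduction, or one argues with the standard two-step: first push the region where $c\le 0$ in using smallness of $w$ near infinity, then propagate). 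This yields the contradiction and hence $w\ge 0$; a strong maximum principle (again via the strict kernel inequality) then upgrades $w\ge 0$ to $w\equiv 0$ or $w>0$ in $H_e$.

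With the maximum principle in hand, the rotating plane argument is essentially formal and parallels the known local-case scheme from \cite{P01,SW13}: start from the hyperplane $\partial H_{e_1}$ given by (U1), for which $v_{e_1}\ge 0$, $v_{e_1}\not\equiv 0$, so by the strong maximum principle $v_{e_1}>0$ in $H_{e_1}$; then rotate $e_1$ continuously, and let $\vartheta^*$ be the supremum of angles for which the inequality $u\ge u\circ Q_e$ persists in $H_e$. By continuity of $u$ and of the reflections, and applying the maximum principle to the limiting antisymmetric function at angle $\vartheta^*$, one shows the ordering is preserved at $\vartheta^*$; if $v_{e_{\vartheta^*}}\not\equiv 0$ one could rotate further (using the strong maximum principle and an argument that a strict interior inequality cannot be destroyed by an infinitesimal rotation — here one may need a small technical lemma controlling $v_e$ near $\partial H_e$, e.g. a Hopf-type bound, or else the soft argument that the set of admissible angles is open and closed), contradicting maximality unless the ordering "closes up," i.e. $v_{e_{\vartheta^*}}\equiv 0$, giving a hyperplane symmetry; repeating from the opposite side pins down a single axis $\R p$ and monotonicity in the polar angle, which is foliated Schwarz symmetry.

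The main obstacle I anticipate is the maximum principle on the unbounded half-space with a merely bounded (not necessarily signed everywhere) potential $c(x)$: the naive argument needs $c\le 0$, but (F2) only gives this where $u$ is small. The decay (U2) is exactly designed to handle this — it guarantees $\{|u|>\delta/2\}$ is bounded, so $c\le 0$ outside a large ball — but one must then run a two-region argument (a "small domain / narrow region" maximum principle near infinity combined with the interior strong maximum principle) and carefully patch them, making sure the nonlocal tails between the two regions are controlled. This patching, together with verifying that the infimum of an antisymmetric $w$ with $w\to 0$ is actually attained in $H_e$ (ruling out escape to the boundary $\partial H_e$, where $w=0$, and to infinity), is where the real work lies; everything else is a careful but routine adaptation of the rotating plane scheme and of \cite{JW14}.
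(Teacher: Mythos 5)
Your overall strategy coincides with the paper's: linearize the reflected difference $v_e$, use (F1) for boundedness of the potential $c_e$, use (U2) together with (F2) to produce a radius $\rho$ with $c_e\le 0$ on $\Omega\setminus B_\rho(0)$ uniformly in $e$, run the rotating plane method starting from the hyperplane given by (U1), and conclude by exhibiting two symmetry hyperplanes in every two-dimensional plane containing $e_1$ (Proposition \ref{foliatedchar}). The genuine gap lies in the central maximum principle on the half-space. Your contradiction at an interior minimum $x_0$ of $w=v_e$ only closes when $x_0$ lies where $c\le 0$; but in the rotating plane step the dangerous region is exactly $\Omega_e\cap B_\rho(0)$ minus the compact set $K$ on which continuity from the previous angle gives $v_e\ge 0$ --- a neighbourhood of $\partial H_e$ (and of $\partial\Omega$) \emph{inside} the large ball, where $c$ may be positive and where the minimum may well sit. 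Your suggested repairs (``$m$ can be taken small after a first reduction'', ``push the region where $c\le 0$ in'') are not arguments, and your ``narrow region maximum principle near infinity'' is mislocated: the problematic strip is near $\partial H_e$, not near infinity.

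The missing lemma is Proposition \ref{weak3}: if $c\le 0$ on $U\setminus B$ and the antisymmetric supersolution $v$ is nonnegative on the part of $B\cap U$ at distance $\ge d$ from $\partial H$, then $v\ge 0$ in all of $H$, with $d$ depending only on $\|c\|_{L^\infty}$ and the kernel. Its proof rests on a specifically nonlocal mechanism your sketch does not identify: the divergence $\int_0^\infty k(r)r^{N-1}\,dr=\infty$ in (k) forces $\kappa_H(x)=\int_{\R^N\setminus H}k(|x-y|)\,dy$ to exceed $\|c\|_{L^\infty}$ on the strip $\{x\in H:\dist(x,\partial H)<d\}$, and the antisymmetric test-function inequality of Lemma \ref{sec:linear-problem-tech} (with test function $1_H(v+\delta)^-$) converts this into absorption of the zeroth-order term on that strip; this, not a Hopf-type bound, is also what lets the rotation start from (U1) and continue past the critical angle without your unproved ``a strict interior inequality cannot be destroyed by an infinitesimal rotation'' step. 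A secondary point: evaluating $(-\Delta)^s w$ pointwise at $x_0$ requires interior regularity beyond the weak formulation in $\cH^s_0(\Omega)$; the paper's purely variational argument avoids this and is what allows the same proof to cover the general kernels of Theorem \ref{schwarzsymm2}.
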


\begin{bem}
 We emphasize that $\Omega$ does not need to be connected in Theorems \ref{result2} and \ref{result3}.
\end{bem}

As an application we will analyze global minimizers of functionals of the form
\[
K:\ \cH^s_0(\Omega)\to \R,\qquad K[u]=\frac{c_{N,s}}{4}\int_{\R^N}\int_{\R^{N}}\frac{(u(x)-u(y))^2}{|x-y|^{N+2s}}\ dxdy-\int_{\Omega} F(|x|,u(x))\ dx.
\]
Here $\Omega\subset \R^N$, $N\geq 2$ is a radial open set and
\[
 F(r,u)=\int_0^{u} f(r,\tau)\ d\tau\qquad\text{ for $r\geq0$, $u\in \R$,}
\]
where $f:[0,\infty)\times \R\to \R$ satisfies (F1), (F2) and for some $a_1,a_2$ and $q\in[2,\frac{2N}{N-2s}]$ we have
\[
 |f(r,u)|\leq a_1|u|+ a_2|u|^{q-1}\qquad\text{ for all $r\geq 0$, $u\in \R$.}
\]

\begin{satz}\label{result4}
 Let $\Omega$ and $F$ be given as above and assume $u\in \cH^s_0(\Omega)$ satisfies either
\begin{enumerate}
 \item $u$ is a continuous bounded global minimizer of $K$ or
\item $u$ is a continuous bounded minimizer of $K$ subject to $\|u\|^q=1$, where $q$ is as for $F$.
\end{enumerate}
If $u$ satisfies additionally (U2), then $u$ is foliated Schwarz symmetric.
\end{satz}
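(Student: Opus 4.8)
The plan is to reduce the minimization statement to the hypotheses of Theorems \ref{result2} and \ref{result3}: concretely, I will show that a minimizer (global, or constrained by $\|u\|^q = 1$) either is foliated Schwarz symmetric already, or else satisfies the reflection property (U1) with respect to \emph{some} hyperplane, after which the conclusion follows from the previous symmetry theorems. First I would recall the \emph{polarization} (two-point rearrangement) machinery: for a half-space $H = H_e$ with reflection $Q_H$, define the polarization $u_H$ by $u_H(x) = \max\{u(x), u(Q_H x)\}$ for $x \in H$ and $u_H(x) = \min\{u(x), u(Q_H x)\}$ for $x \notin H$. The key analytic facts are that polarization does not increase the Gagliardo seminorm $[u]_s^2 = c_{N,s} \int\!\!\int \frac{(u(x)-u(y))^2}{|x-y|^{N+2s}}\,dxdy$ (this is the nonlocal Pólya–Szegő-type inequality; it holds because $\Omega$ is radial, so $Q_H\Omega = \Omega$ and the kernel $|x-y|^{-N-2s}$ has the right rearrangement monotonicity), and that $\int_\Omega F(|x|,u_H)\,dx = \int_\Omega F(|x|,u)\,dx$ since polarization is measure-preserving on each radial sphere and $F(|x|,\cdot)$ depends on $x$ only through $|x|$. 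The same invariance holds for $\|u_H\|^q = \|u\|^q$, so in the constrained case $u_H$ remains admissible.

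The core argument then runs as follows. Since $u$ is a minimizer and $u_H$ is admissible with $K[u_H] \le K[u]$, equality must hold, so $[u_H]_s = [u]_s$; by the equality case of the nonlocal Pólya–Szegő inequality (the strict version, which I expect to be available from the cited literature or provable by the same antisymmetric maximum principle techniques the paper develops), this forces, for each half-space $H$, either $u \ge u\circ Q_H$ on $H$ or $u \le u\circ Q_H$ on $H$; i.e. $u$ is comparable to its reflection across \emph{every} hyperplane through the origin. At this point there is a dichotomy. If for some $H$ the inequality $u \ge u\circ Q_H$ is strict in the sense that $u \not\equiv u\circ Q_H$, then (U1) holds and Theorem \ref{result2} (when $\Omega$ is bounded) or Theorem \ref{result3} (when $\Omega$ is unbounded, using (F2) and the assumed (U2)) applies directly and gives foliated Schwarz symmetry. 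If instead $u \equiv u \circ Q_H$ for \emph{every} hyperplane $H$ through the origin, then $u$ is invariant under all reflections fixing the origin, hence radial, and a radial function is trivially foliated Schwarz symmetric with respect to any $p$. Either way we are done; I would also note that in the unbounded case the admissibility $u \in \cH^s_0(\Omega)$ together with minimality and the subcritical/critical growth bound $|f(r,u)| \le a_1|u| + a_2|u|^{q-1}$ ensures the relevant integrals are finite and (U2) is the only decay input actually needed to invoke Theorem \ref{result3}.

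The main obstacle I anticipate is the \emph{equality case} of the polarization inequality for the fractional seminorm: proving that $[u_H]_s = [u]_s$ forces $u$ and $u \circ Q_H$ to be ordered on $H$ requires more than the plain inequality. I would handle it by writing the seminorm difference $[u]_s^2 - [u_H]_s^2$ as a manifestly nonnegative integral — splitting $\R^N \times \R^N$ into the four pieces $(H\times H)$, $(H^c\times H^c)$, $(H\times H^c)$, $(H^c\times H)$ and using the pointwise inequality that drives polarization monotonicity — so that vanishing of the difference forces the integrand to vanish a.e.; combined with the continuity of $u$ this yields the pointwise ordering, with the strict alternative $u\not\equiv u\circ Q_H$ unless $u$ is symmetric across $\partial H$. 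A secondary point to be careful about is that the polarization preserves the support condition $u\equiv 0$ on $\R^N\setminus\Omega$, which is exactly where the radial symmetry of $\Omega$ (rather than mere reflection symmetry across the specific $\partial H$) enters, and it is why the theorem is stated for radial $\Omega$. The constrained case (ii) requires only the extra remark that the constraint set $\{\|u\|^q=1\}$ is polarization-invariant, which is immediate from $\|u_H\|_q = \|u\|_q$.
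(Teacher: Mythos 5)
Your high-level strategy coincides with the paper's: polarize, show the minimizer is ordered with respect to some reflection, deduce (U1), and invoke Theorems \ref{result2}/\ref{result3}. The difference lies in how the ordering is extracted, and this is exactly where your argument is incomplete. You want to derive it from the \emph{equality case} of the polarization inequality $\cJ(u_H,u_H)\le\cJ(u,u)$, which you admit you would need to either find in the literature or prove; the paper only cites \cite{B92,VW04} for the inequality itself, not for a rigidity statement. Your sketch of the equality case (splitting $\R^N\times\R^N$ into the four blocks relative to $H$ and $H^c$) is essentially sound: the pointwise deficit is $\bigl(|a-b|\,|c-d|-(a-b)(c-d)\bigr)\bigl(k(|x-y|)-k(|x-\bar y|)\bigr)$ with $a=u(x)$, $b=u(\bar x)$, $c=u(y)$, $d=u(\bar y)$, so its vanishing a.e.\ forces $(u(x)-u(\bar x))(u(y)-u(\bar y))\ge 0$ for a.e.\ $x,y\in H$ whenever $k(|x-y|)>k(|x-\bar y|)$. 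For the fractional kernel, which is globally strictly decreasing, this does yield the global sign dichotomy via continuity, so your route can be made to work for Theorem \ref{result4} as stated; but it requires writing out this rigidity lemma in full, and it would degrade for kernels that are only strictly decreasing near the origin (the setting of the paper's general version), where one would additionally need a connectedness or chaining argument.

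The paper avoids this issue entirely by exploiting that a minimizer is a critical point. If $u$ is not radial, pick $x_0$ with $u(x_0)>u(-x_0)$ and $H\in\cH_0(x_0)$ with $Q_H(x_0)=-x_0$. By (\ref{pol}) and the invariance of the potential term, $u_H$ is again a (constrained) minimizer, hence \emph{both} $u$ and $u_H$ solve the Euler--Lagrange equation $(P')$. Then $v=u_H-u$ is an antisymmetric supersolution of a linearized problem with $v\ge 0$ on $H$, and Proposition \ref{strong1} gives the dichotomy $v\equiv 0$ or $v>0$ in $\Omega\cap H$; since $v(x_0)=0$ at an interior point, $v\equiv 0$, i.e.\ $H$ is dominant for $u$ and $u\not\equiv u\circ Q_H$, which is (U1). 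This replaces your missing Pólya--Szegő rigidity lemma by the antisymmetric strong maximum principle already established in Section \ref{mp}, and it needs only one well-chosen hyperplane rather than the ordering for all of them. If you want to complete your version, you must actually prove the equality case; otherwise the argument as written has a gap at its central step.
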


\begin{bem}
There have been several results on the analysis of minimizers involving nonlocal equations, we refer e.g. to \cite{SV12,SV13,B14,SV15}. However, up to the authors knowledge there have been no symmetry results such as the one above.
\end{bem}

To prove our results, we will work only with the bilinearform associated to the fractional Laplacian. In this way, we will only need the monotonicity and symmetry properties of the kernel function $z\mapsto |z|^{-N-2s}$, $z\in \R^N\setminus\{0\}$ and in particular not its isotropy or its scaling laws. Hence our results extend to a more general class of nonlocal operators. In the spirit of \cite{FK12,FKV13, KM14b,SV13, JW14,J15} we will consider a more general class of nonlocal operators which includes the fractional Laplacian but also considers operators of e.g. zeroth order. To be precise, we will consider a self-adjoint nonnegative operator $I$ which is associated to the following nonlocal bilinear form
\begin{equation}\label{bilinearform}
\cJ(v,w)=\int_{\R^{N}}\int_{\R^N} (v(x)-v(y))(w(x)-w(y))k(|x-y|)\ dxdy \qquad v,w\in \cC^2_c(\R^N),
\end{equation}
where $k:(0,\infty)\to [0,\infty)$ is a decreasing function such that
\begin{enumerate}
\item[(k)] there is $r_0>0$ with $k|_{(0,r_0)}$ is strictly decreasing and $k$ satisfies
\[
\int_{0}^{\infty} \min\{1,r^2\} k(r)r^{N-1}\ dr<\infty\quad\text{ and }\quad \int_{0}^{\infty}k(r)r^{N-1}\ dr=\infty.
\]
\end{enumerate}
We note that under these assumptions the operator $I$ is represented for $\varphi\in \cC^2_c(\R^N)$, $x\in \R^N$ by
\begin{equation}\label{defiop}
[I\varphi](x)=P.V. \int_{\R^{N}} (\varphi(x)-\varphi(y))k(|x-y|)\ dy:= \lim_{\epsilon\to0}\int_{|x-y|>\epsilon}(\varphi(x)-\varphi(y))k(|x-y|)\ dy.
\end{equation}
With $I$ in place of $(-\Delta)^s$ in $(P)$, we then analyze symmetry properties of solutions of
\[
 (P')\qquad\left\{\begin{aligned}
  I u&=f(|x|,u)&& \text{ in $\Omega$;}\\
	u&\equiv 0&& \text{ on $\R^{N}\setminus \Omega$;}\\
\lim_{|x|\to\infty}u(x)&=0,&&
 \end{aligned}\right.
\]

Here we will again use a weak formulation of solutions, i.e. for $\Omega\subset\R^N$ open, denote
\begin{equation}\label{defispace}
\cD^{J}(\Omega):=\{u\in L^2(\R^{N})\;:\; \cJ(u,u)<\infty \;\text{and}\; u\equiv 0 \;\text{ on $\R^N\setminus \Omega$}\},
\end{equation}
which equipped with the scalar product
\[
\langle u,v\rangle_J:=\langle u,v\rangle_{L^2(\R^{N})}+\cJ(u,v)
\]
is a Hilbert space. A function $u\in \cD^{J}(\Omega)$ is called a \textit{solution of $(P')$} if for all $\varphi\in \cD^{J}(\Omega)$ with compact support in $\R^N$ the integral $\int_{\R^{N}}f(|x|,u(x))\varphi(x)\ dx$ exists and 
\[
\cJ(u,\varphi)=\int_{\Omega}f(|x|,u(x))\varphi(x)\ dx.
\]
The following examples satisfy (k):
\begin{bsp}\label{bsp1}
Let $k:(0,\infty)\to(0,\infty)$ be given for $r>0$ by
\begin{enumerate}
\item For $s\in(0,1)$ the function $k:(0,\infty)\to(0,\infty)$ given by $k(r)=c_{N,s}r^{-N-2s}$ for $r>0$ with $c_{N,s}$ as above satisfies (k).
\item $k(r)= 1_{[0,1]}(r)r^{-N}$ or $k(r)= -1_{[0,1]}(r)[\ln r]^{-N}$ satisfy (k). In particular, these give examples for operators of order zero.
\item $k(r)= \tilde{c}_{N,s}r^{-\frac{N+2s}{2}}K_{\frac{N+2s}{2}}(r)$, $s\in(0,1)$, where $K_\nu:(0,\infty)\to [0,\infty)$ denotes the modified Bessel function of second kind with order $\nu$ and $\tilde{c}_{N,s}=s(1-s)\pi^{-\frac{N}{2}}4^s \frac{2^{1-\frac{N+2s}{2}}}{\Gamma(2-s)}$ is a normalization constant. Note that for $u\in \cC^2_c(\R^N)$ the operator $(\textnormal{id}-\Delta)^s$ has the following integral representation (see e.g. \cite{FF14})
\[
\left[(\textnormal{id}-\Delta)^su\right](x) =P.V.\int_{\R^{N}}(u(x)-u(y))k(|x-y|)\ dy + u(x).
\]
Thus this operator is included in problem $(P')$ with $I=(\textnormal{id}-\Delta)^s-\textnormal{id}$ and $f$ replaced by $\tilde{f}(r,u)=-u+f(r,u)$ for $r\geq0$, $u\in \R$.
\end{enumerate}
\end{bsp}

Theorems \ref{result2} and \ref{result3} then extend to

\begin{satz}\label{schwarzsymm}
Let $\Omega\subset\R^N$, $N\geq 2$ be a bounded radial set. Moreover, if $\Omega$ is not connected, assume additionally that $k:(0,\infty)\to(0,\infty)$ is strictly decreasing. Let $f$ satisfies (F1). Then every bounded continuous solution $u$ of $(P')$ satisfying (U1) $u$ is foliated Schwarz symmetric.
\end{satz}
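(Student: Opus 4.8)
\emph{Strategy.} The plan is to run the rotating plane method, following the scheme used for Theorems \ref{result2} and \ref{result3}; since $\Omega$ is bounded, neither (F2) nor (U2) is needed, and the only genuinely new point compared with those results is that the maximum principles must be available for the general kernel $k$ satisfying (k).

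\emph{Set-up.} For $e\in S^{N-1}$ write $H_e:=\{x\in\R^N:x\cdot e>0\}$, let $Q_e$ denote the reflection at $\partial H_e$, and put $u_e:=u\circ Q_e$ and $w_e:=u-u_e$. Because $\Omega$ is radial, $Q_e\Omega=\Omega$ and $|Q_ex|=|x|$; since the kernel in $\cJ$ depends on $x,y$ only through $|x-y|$, a change of variables shows that $u_e$ is again a bounded continuous solution of $(P')$. Hence $w_e\in\cD^{J}(\Omega)$ is antisymmetric with respect to $\partial H_e$ (that is, $w_e\circ Q_e=-w_e$), it vanishes on $\R^N\setminus\Omega$, and
\[
\cJ(w_e,\varphi)=\int_\Omega c_e(x)\,w_e(x)\,\varphi(x)\ dx\qquad\text{for all }\varphi\in\cD^{J}(\Omega)\text{ with compact support},
\]
where $c_e(x):=\frac{f(|x|,u(x))-f(|x|,u_e(x))}{u(x)-u_e(x)}$ if $u(x)\neq u_e(x)$ and $c_e(x):=0$ otherwise. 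By (F1) with $K:=\|u\|_{L^\infty(\R^N)}$ one has $\|c_e\|_{L^\infty}\le L:=L(K)$ for all $e$, so each $w_e$ is an antisymmetric weak solution of a linear nonlocal equation with zeroth order coefficient bounded uniformly in $e$.

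\emph{Maximum principles and rotating the plane.} I would then invoke two maximum principles for antisymmetric functions (the nonlocal analogues of those in \cite{JW14}, now for the class (k)): a small volume principle, providing a threshold $\delta_0=\delta_0(N,k,L)>0$ such that $w_e\ge0$ on $H_e$ whenever $w_e\ge0$ on $H_e\setminus U$ for some open $U\subseteq H_e$ with $|U|<\delta_0$; and a strong principle, stating that if $w_e\ge0$ on $H_e$ and $w_e\not\equiv0$ then $w_e>0$ on $\Omega\cap H_e$. Granting these, set $\cE:=\{e\in S^{N-1}:w_e\ge0\text{ on }H_e\}$; by (U1), $e_1\in\cE$ and $w_{e_1}\not\equiv0$, and $\cE$ is closed because $u$ is bounded and continuous and $e\mapsto Q_e$ is continuous, so $w_{e_n}\to w_e$ locally uniformly as $e_n\to e$. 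The key local step is that if $e\in\cE$ with $w_e\not\equiv0$, then a whole $S^{N-1}$ neighbourhood of $e$ lies in $\cE$: the strong principle gives $w_e>0$ on $\Omega\cap H_e$, and picking a compact $K\subseteq\Omega\cap H_e$ with $|K|>\tfrac12|\Omega|-\tfrac12\delta_0$ --- recall that radiality of $\Omega$ forces $|\Omega\cap H_e|=\tfrac12|\Omega|$ for every $e$, since $\partial H_e$ is a hyperplane through the origin --- one obtains, for $e'$ close enough to $e$, that $K\subseteq\Omega\cap H_{e'}$, that $w_{e'}\ge0$ on $K$ by uniform continuity, and that $|(\Omega\cap H_{e'})\setminus K|=\tfrac12|\Omega|-|K|<\delta_0$, so the small volume principle gives $e'\in\cE$. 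Consequently, rotating in any two plane $\{\cos\vartheta\,e_1+\sin\vartheta\,\nu:\vartheta\in\R\}$ with $\nu\cdot e_1=0$, the largest $\vartheta^*$ with $\cos\vartheta'\,e_1+\sin\vartheta'\,\nu\in\cE$ for all $\vartheta'\in[0,\vartheta^*]$ is attained, is strictly less than $\pi$ (otherwise $-e_1\in\cE$, which with $e_1\in\cE$ forces $w_{e_1}\equiv0$), and by the neighbourhood step one necessarily has $w_{e_{\vartheta^*}}\equiv0$, i.e. $u$ is symmetric with respect to the hyperplane $\partial H_{e_{\vartheta^*}}$, where $e_{\vartheta^*}=\cos\vartheta^*\,e_1+\sin\vartheta^*\,\nu$.

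\emph{Conclusion, and the main obstacle.} Carrying this out for all $\nu\perp e_1$ produces a family of hyperplanes of symmetry of $u$ together with the information that $\cE$ is invariant under each of the corresponding reflections. It remains --- and here the equivalent description of foliated Schwarz symmetry from Section \ref{foliatedsymmgoal} enters --- to show that this family of mirror hyperplanes must share a common axis $\R p$; otherwise the reflections would generate too large a symmetry group and $u$ would be radial, contradicting $w_{e_1}\not\equiv0$. Once $u$ is axially symmetric about $\R p$, the half spheres produced along the rotations assemble into the statement that $w_e\ge0$ on $H_e$ for every $e$ with $e\cdot p\ge0$, which for continuous $u$ is equivalent to $u$ being foliated Schwarz symmetric with respect to $p$. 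I expect the two delicate points to be: (i) the strong antisymmetric maximum principle for a merely decreasing kernel --- and it is precisely for this, when $\Omega$ is disconnected, that one must assume $k$ strictly decreasing on all of $(0,\infty)$, so that the long range part of the kernel links the connected components of $\Omega$; and (ii) the final geometric assembly of a single closed hemisphere inside $\cE$ out of the rotating plane data.
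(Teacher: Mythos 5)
Your proposal follows essentially the same route as the paper: linearize via the reflection difference, use the small-volume and strong antisymmetric maximum principles (Propositions \ref{weak1}, \ref{strong1}/\ref{strong2}) to show the set of admissible directions is open where the difference is nontrivial and that the critical angles in each two-plane through $e_1$ yield symmetry hyperplanes. The ``final geometric assembly'' you flag as delicate point (ii) is exactly what Proposition \ref{foliatedchar} provides, since your rotation argument produces, for every two-plane $P$ containing $e_1$, two distinct symmetry directions in the same connected component of $\cE\cap P$ --- precisely the hypothesis of that proposition.
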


\begin{satz}\label{schwarzsymm2}
Let $\Omega\subset\R^N$, $N\geq 2$ be a unbounded radial set. Moreover, if $\Omega$ is not connected, assume additionally that $k:(0,\infty)\to(0,\infty)$ is strictly decreasing. Let $f$ satisfies (F1) and (F2). Then every bounded continuous solution $u$ of $(P)$ satisfying (U1) and (U2) is foliated Schwarz symmetric.
\end{satz}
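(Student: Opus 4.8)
The plan is to establish Theorem~\ref{schwarzsymm2} by a rotating plane argument for the bilinear form $\cJ$; since such an argument uses only the symmetry and the monotonicity of the kernel, it runs along the same lines as the proof of Theorem~\ref{result3}, the strict monotonicity of $k$ being needed only to propagate positivity across the components of a disconnected $\Omega$. The first step is to linearize. For $e\in S^{N-1}$ write $Q_e$ for the reflection in $e^\perp$, $H_e=\{x\cdot e>0\}$ and $w_e:=u-u\circ Q_e$. Since $\Omega$ is radial, $Q_e\Omega=\Omega$, and since $|Q_ex|=|x|$, testing $(P')$ against functions $\varphi\ge0$ with compact support in $H_e\cap\Omega$ (after antisymmetrization) shows that $w_e$ is antisymmetric, $w_e\circ Q_e=-w_e$, and solves $\cJ(w_e,\varphi)=\int_\Omega c_e\,w_e\,\varphi$ for all such $\varphi$, where $c_e(x)=\big(f(|x|,u(x))-f(|x|,u(Q_ex))\big)/w_e(x)$ and $c_e=0$ where $w_e$ vanishes; by $(F1)$, $\|c_e\|_\infty\le L=L(\|u\|_\infty)$. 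Here the extra hypotheses enter: by $(U2)$ there is $\rho>0$ with $|u|\le\delta$ on $\R^N\setminus B_\rho$, so $(F2)$ forces $c_e\le0$ on $\R^N\setminus B_\rho$.

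The second step is to assemble the maximum principles for antisymmetric supersolutions that I would prove first (extending \cite{JW14}): a small-measure principle, stating that an antisymmetric $v$ with $\cJ(v,\varphi)\ge\int c\,v\,\varphi$ on an open set $D\subset H_e$ whose measure lies below a threshold depending only on $N$, $L$, $k$, and with $v\ge0$ on $H_e\setminus D$, is $\ge0$ on $D$; a decay principle, with the same conclusion for arbitrary $D$ provided $c\le0$ on $D$ and $v$ vanishes at infinity; and a strong maximum principle, that an antisymmetric $v\ge0$ on $H_e$ with $v\not\equiv0$ is $>0$ throughout $H_e\cap\Omega$ — this last being the point where, when $\Omega$ is disconnected, one needs $k>0$ everywhere, i.e. $k$ strictly decreasing. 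Combining the first two by splitting $H_e\cap\Omega$ into its intersection with $B_\rho$ (where, after discarding a small bad set near $\partial H_e\cup\partial\Omega$, the small-measure principle applies) and its part outside $B_\rho$ (where $c_e\le0$ and $w_e\to0$, so the decay principle applies) yields the tool I will use: if $w_e\ge0$ on $H_e$ outside a set of small enough measure, then $w_e\ge0$ on all of $H_e$.

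The third step is the rotation. By $(U1)$ there is $e_1$ with $w_{e_1}\ge0$ on $H_{e_1}$ and $w_{e_1}\not\equiv0$, hence $w_{e_1}>0$ on $H_{e_1}\cap\Omega$ by the strong maximum principle. Fix $e_2\perp e_1$, put $e_\theta=\cos\theta\,e_1+\sin\theta\,e_2$, and let $\theta_0=\sup\{\theta\in[0,\pi]: w_{e_\vartheta}\ge0 \text{ on } H_{e_\vartheta}\text{ for every }\vartheta\in[0,\theta]\}$. As $\theta\to0^+$, the symmetric difference $H_{e_\theta}\triangle H_{e_1}$ meets $B_\rho$ in a set of vanishing measure and $w_{e_\theta}\to w_{e_1}$ locally uniformly, so the tool above gives $w_{e_\theta}\ge0$ on $H_{e_\theta}$ for small $\theta>0$, whence $\theta_0>0$; and $\theta_0=\pi$ is impossible, since then $-e_1=e_\pi$ would, together with $e_1$ and antisymmetry, force $w_{e_1}\equiv0$. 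At $\theta_0$, continuity yields $w_{e_{\theta_0}}\ge0$ on $H_{e_{\theta_0}}$, and by the strong maximum principle either $w_{e_{\theta_0}}\equiv0$ or $w_{e_{\theta_0}}>0$ on $H_{e_{\theta_0}}\cap\Omega$; in the latter case the same continuity-plus-tool argument would let us increase $\theta$ past $\theta_0$, against its definition. So $w_{e_{\theta_0}}\equiv0$: $u$ is symmetric in $H_{e_{\theta_0}}$. A standard combinatorial step (carrying this out in every $2$-plane through $e_1$, as in \cite{P01,PW07,GPW10,DP13,SW13}) then produces a unit vector $p$ with $e_1\cdot p>0$ and $w_e\equiv0$ for every $e\perp p$, so that $u$ is invariant under all rotations fixing $\R p$; feeding this invariance back into $w_{e_1}\ge0$ on $H_{e_1}$ shows that $u$ is moreover nonincreasing in the polar angle $\arccos(x\cdot p/|x|)$, i.e. foliated Schwarz symmetric with respect to $p$ in the sense of Section~\ref{foliatedsymmgoal}.

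I expect the main obstacle to be the maximum principle on the unbounded set $H_e\cap\Omega$: away from $\partial H_e$ it is neither of small measure nor bounded, so the small-measure principle alone is useless there, and one must genuinely use $(U2)$ and $(F2)$ both to switch off the potential outside a large ball and to prevent mass of $w_e$ from escaping to infinity, then splice the outer estimate to the small-measure estimate near $\partial H_e\cup\partial\Omega$ — with the splicing quantitative and uniform in $\theta$, so that the step of rotating a little further truly goes through. The only other delicate point is the strong maximum principle connecting the components of $\Omega$, which is precisely why $k$ is assumed strictly decreasing when $\Omega$ is disconnected.
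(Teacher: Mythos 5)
Your architecture is the paper's own: linearize via reflection differences, use (F1) for the uniform bound on $c_e$ and (U2) together with (F2) to force $c_e\le0$ outside a fixed ball $B_\rho(0)$, splice a maximum principle valid near $\partial H_e$ to one valid where $c_e\le 0$, and close the rotation with the strong maximum principle (strict monotonicity of $k$ entering exactly where you place it, to connect components of a disconnected $\Omega$). Two steps, however, do not go through as written.

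First, your ``tool'' is mis-stated, and its derivation is circular if read literally. In the rotation step you only know $w_{e_\theta}\ge0$ on a compact $K\subset B_\rho(0)$; the region $H_{e_\theta}\setminus B_\rho(0)$ has infinite measure and carries no sign information, so the hypothesis ``$w_{e_\theta}\ge0$ on $H_{e_\theta}$ outside a set of small measure'' is not available. Moreover you cannot apply the small-measure principle on the piece inside $B_\rho(0)$ and the decay principle on the piece outside \emph{sequentially}: each of these antisymmetric maximum principles requires $v\ge0$ on the complement of its domain inside $H_e$, which contains the other piece. What is needed is a single combined statement — the paper's Proposition \ref{weak3} — proved by testing with $1_{H_e}(v+\delta)^-$ over all of $H_e\cap\Omega$ at once and observing that the set where this test function is positive \emph{and} $c>0$ is confined to a strip near $\partial H_e$ inside $B_\rho(0)$, where the killing term $\kappa_{H_e}$ dominates $c_\infty$ (with a threshold $d$ independent of the hyperplane); outside that strip one uses either the sign of $c$ or the small-measure bound on $\Lambda_1$. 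This is precisely the quantitative, $\theta$-uniform splicing you flag as the main obstacle, so the gap is one of execution rather than of strategy. Second, rotating in only one direction yields a single symmetry hyperplane $e_{\theta_0}$ per two-dimensional plane $P$, whereas the characterization of foliated Schwarz symmetry (Proposition \ref{foliatedchar}) requires \emph{two} distinct symmetry directions in the same connected component of $M\cap P$ — and $e_1$ itself is not one, since (U1) asserts $u\not\equiv u\circ Q_{H_{e_1}}$. You must also take the infimum angle $\varphi^-$ as in the paper and show that both extremal directions are symmetry directions before the combinatorial step can produce the axis $p$.
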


\begin{bem}
\begin{enumerate}
\item  Note that Theorem \ref{result2} and \ref{result3} are special cases of Theorem \ref{schwarzsymm}, \ref{schwarzsymm2} resp. 
\item By a recent regularity result of Kassmann and Mimica \cite{KM14b} we have that a bounded solution $u\in \cD^J(\Omega)$ of $(P')$ satisfies $u\in C(\Omega)$ if (F1) is satisfied.
\end{enumerate}
\end{bem}

Finally, we will also consider the case of nonnegative bounded solutions of
\[
 (R)\qquad\left\{\begin{aligned}
  Iu&=f(u)&& \text{ in $\R^N$;}\\
\lim_{|x|\to\infty}u(x)&=0,&&
 \end{aligned}\right.
\]
where $I$ is as before the nonlocal operator associated to the bilinearform as in (\ref{bilinearform}) with a kernel function $k$ satisfying (k).

\begin{satz}\label{result1}
 Assume (k), $N\in \N$ and let $f:\R\to \R$ satisfy (F1) and (F2). Then every nonnegative bounded continuous solution $u \in \cD^{J}(\R^N)$ of $(R)$
is radial symmetric up to translation, i.e. there is $z_0\in \R^N$ such that $u(\cdot-z_0)$ is radially symmetric. Moreover, either $u \equiv 0$ in $\R^{N}$, or $u(\cdot-z_0)$ is strictly decreasing in its radial direction and therefore $u>0$ in $\R^N$.
\end{satz}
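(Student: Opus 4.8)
The plan is to run the method of moving planes in an arbitrary direction, obtain a hyperplane of symmetry for each direction, and then assemble these into radial symmetry about a common point. (Theorems \ref{schwarzsymm}--\ref{schwarzsymm2} give foliated Schwarz symmetry only once (U1) is known, and for $(R)$ this is not available a priori, so a direct moving-plane argument is needed.) Fix $e\in S^{N-1}$; for $\lambda\in\R$ set $T_\lambda=\{x\in\R^N: x\cdot e=\lambda\}$, $\Sigma_\lambda=\{x\cdot e>\lambda\}$, let $Q_\lambda$ be the reflection at $T_\lambda$, and put $w_\lambda:=u\circ Q_\lambda-u$, which is antisymmetric with respect to $T_\lambda$. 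Since $k(|\cdot|)$ is reflection invariant, $I$ commutes with $Q_\lambda$, so on $\Sigma_\lambda$ the function $w_\lambda$ solves (weakly, against antisymmetric test functions) $Iw_\lambda=c_\lambda w_\lambda$, where $c_\lambda(x)=\frac{f(u(Q_\lambda x))-f(u(x))}{u(Q_\lambda x)-u(x)}$ on the set where the denominator is nonzero and $c_\lambda=0$ otherwise; by (F1), $|c_\lambda|\le L=L(\|u\|_\infty)$. I would then establish, for each $e$: (a) there is $\lambda_0(e)\in\R$ with $w_{\lambda_0(e)}\equiv0$, i.e.\ $u$ is symmetric in $T_{\lambda_0(e)}$; and (b) $w_\lambda>0$ in $\Sigma_\lambda$ for every $\lambda>\lambda_0(e)$, unless $u\equiv0$.

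For (a), the starting step is $w_\lambda\ge0$ in $\Sigma_\lambda$ for all sufficiently large $\lambda$. Here (U2) and (F2) enter: for $\lambda$ large every $x\in\Sigma_\lambda$ has $|x|$ large, hence $u(x)\le\delta$, and on $\{w_\lambda<0\}$ one has $0\le u(Q_\lambda x)<u(x)\le\delta$, so $f(u(x))\le0$ and $f(u(Q_\lambda x))\le0$ by (F2); this sign, together with the favourable antisymmetric contribution --- for $x,y\in\Sigma_\lambda$ one has $|x-Q_\lambda y|>|x-y|$, hence $k(|x-Q_\lambda y|)\le k(|x-y|)$, so the reflection ``loses mass'' --- puts $w_\lambda$ in the scope of the maximum principle for antisymmetric supersolutions on unbounded subsets of a half-space, forcing $w_\lambda^-\equiv0$. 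Next, decrease $\lambda$ and set $\lambda_0(e):=\inf\{\mu:\ w_\lambda\ge0\ \text{in}\ \Sigma_\lambda\ \text{for all}\ \lambda\ge\mu\}$. One has $\lambda_0(e)>-\infty$: if $w_\lambda\ge0$ in $\Sigma_\lambda$ for every $\lambda$, then letting $\lambda\to-\infty$ with $x$ fixed, $u(Q_\lambda x)\to0$ (the reflected point escapes to infinity) gives $u(x)\le0$, whence $u\equiv0$. By continuity $w_{\lambda_0(e)}\ge0$ in $\Sigma_{\lambda_0(e)}$; if $w_{\lambda_0(e)}\not\equiv0$, the strong maximum principle for antisymmetric supersolutions yields $w_{\lambda_0(e)}>0$ in $\Sigma_{\lambda_0(e)}$, and then a compactness argument --- the only places where $w_\lambda$ could turn negative as $\lambda$ is lowered slightly are a neighbourhood of $T_{\lambda_0(e)}$, where the ``lost mass'' term dominates, and a neighbourhood of infinity, where (U2)+(F2) apply --- shows $w_\lambda\ge0$ in $\Sigma_\lambda$ for $\lambda$ slightly below $\lambda_0(e)$, contradicting minimality. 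Hence $w_{\lambda_0(e)}\equiv0$. For (b): for $\lambda>\lambda_0(e)$ we have $w_\lambda\ge0$; if $w_\lambda\equiv0$ for some such $\lambda$, then $u$ is symmetric in the two parallel hyperplanes $T_\lambda$ and $T_{\lambda_0(e)}$, hence periodic in direction $e$, hence $u\equiv0$ by (U2); otherwise the strong maximum principle gives $w_\lambda>0$ in $\Sigma_\lambda$.

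Carrying (a)--(b) out for every $e\in S^{N-1}$, and assuming $u\not\equiv0$: the symmetry hyperplane in direction $e$ is unique (a second one would give periodicity), $e\mapsto\lambda_0(e)$ is continuous with $\lambda_0(-e)=-\lambda_0(e)$, and a standard argument shows all the $T_{\lambda_0(e)}$ share a common point $z_0$; thus $v:=u(\cdot-z_0)$ is symmetric in every hyperplane through $0$, i.e.\ radial. The strict inequality in (b) translates, for any $p,q$ with $|p|<|q|$ (take $e=\frac{q-p}{|q-p|}$ and $\lambda=\frac{(p+q)\cdot e}{2}>0$, so that $Q_\lambda q=p$ and $q\in\Sigma_\lambda$), into $v(p)=v(Q_\lambda q)>v(q)$; hence $v$ is strictly decreasing in $|x|$. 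Finally, if $v(x_1)=0$ for some $x_1\ne0$, then $v\equiv0$ on $\{|x|\ge|x_1|\}$ and $v>0$ on $B_{|x_1|}(0)$ by monotonicity, so evaluating the equation at $x$ with $|x_1|<|x|$ gives $Iv(x)=-\int_{B_{|x_1|}(0)}v(y)k(|x-y|)\,dy<0$, whereas $Iv(x)=f(v(x))=f(0)=0$, a contradiction. Hence $v>0$ in $\R^N$, and $v$ is radial, positive and strictly radially decreasing; the remaining possibility is $u\equiv0$.

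The \emph{main obstacle} is the starting step and, more broadly, the maximum principle for antisymmetric supersolutions on unbounded subdomains of a half-space: because $(R)$ lives on all of $\R^N$ there is no boundary to exploit, so the positivity of $w_\lambda$ for large $\lambda$ has to be extracted from the decay (U2), the sign condition (F2) near $u=0$, and the monotonicity of $k$ alone. Making the ``slightly below $\lambda_0$'' continuation rigorous --- controlling where $w_\lambda$ can first become negative and ruling it out near $T_{\lambda_0}$ via the ``lost mass'' term and near infinity via (U2)+(F2) --- is the other delicate point; both are handled by the variants of the antisymmetric maximum principle developed in the paper (extending \cite{JW14}). Note also that here no extra strict monotonicity of $k$ is needed (unlike in Theorems \ref{schwarzsymm}--\ref{schwarzsymm2}), which is consistent with $\R^N$ being connected, so the strong maximum principle follows from (k) alone.
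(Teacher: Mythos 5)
Your proposal follows the same overall strategy as the paper's proof: moving planes in an arbitrary direction $e$, a starting step for large $\lambda$, the critical value $\lambda_\infty=\lambda_0(e)$ with $V_{\lambda_\infty}u\equiv0$, and the assembly of the symmetry hyperplanes into radial symmetry about a common point together with strict radial decrease. Your continuation step (``slightly below $\lambda_0(e)$'') is exactly the paper's Lemma \ref{step2}(ii), carried out via Proposition \ref{weak3}. Where you genuinely diverge is the starting step: the paper first proves $u>0$ everywhere (Lemma \ref{pos}, strong maximum principle applied to $u$ itself), sets $\kappa=\inf_{B_\rho(0)}u>0$, and for large $\lambda$ derives the quantitative bound $V_\lambda u\ge \kappa/2$ on the reflected ball $Q_\lambda(B_\rho(0))$ --- the only part of the bad set $G_\lambda$ inside $H_\lambda$ --- so that Proposition \ref{weak3} applies (Lemma \ref{start1}). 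You instead observe that on $\{w_\lambda<0\}\cap\Sigma_\lambda$ one automatically has $0\le u(Q_\lambda x)<u(x)\le\delta$ once $\lambda$ is large, so the coefficient is controlled wherever the test function lives; this dispenses with Lemma \ref{pos} and the explicit lower bound, and positivity of $u$ is recovered only at the end from strict decrease plus decay. Your proof that $\lambda_0(e)>-\infty$ (send $\lambda\to-\infty$ with $x$ fixed, so the reflected point escapes to infinity) also differs from the paper, which simply reruns the starting step in direction $-e$. Both routes work; yours is leaner at the start, while the paper's version invokes its stated maximum principles verbatim.

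Two caveats. First, Lemma \ref{weak2} and Proposition \ref{weak3} as stated require $c\le 0$ on all of $U$ (resp.\ $U\setminus B$), whereas your starting step only controls matters on $\{w_\lambda<0\}$; this is harmless since the proofs only use the sign of $c$ on the support of the test function $1_H(v+\epsilon)^-\subset\{v<0\}$, but it must be said. Relatedly, knowing $f(u(x))\le0$ and $f(u(Q_\lambda x))\le0$ is not by itself the sign condition that is needed --- what enters is the sign of the difference quotient $c_\lambda$ --- though here you are making exactly the same use of (F2) as the paper does in \eqref{decay}, so this is not a gap relative to the paper's own argument. Second, your closing evaluation of $Iv(x)$ pointwise at a zero of $v$ is not justified for weak solutions in $\cD^J(\R^N)$ and is unnecessary: strict radial decrease together with $v\ge0$ and $v\to0$ at infinity already gives $v>0$.
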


Radial symmetry for local equations via the maximum principle goes back to \cite{GNN79,BN91}. For equations involving the fractional Laplacian symmetry results have been shown in \cite{FW13-2,felmer-quaas-tan,Chen_Li_Ou}. Chen, Li and Ou \cite{Chen_Li_Ou} show radial symmetry for solutions of equations of type $(-\Delta)^su=u^{\frac{N+2s}{N-2s}}$ in $\R^N$ via the inverse of the fractional Laplacian using the moving plane method for integral equations. This method is generalized by Felmer, Quaas and Tan \cite{felmer-quaas-tan} to prove radial symmetry for positive solutions of equations of type $(-\Delta)^s+u=f(u)$ in $\R^N$. In \cite{MC08} the authors show radial symmetry for positive solutions in $L^q(\R^N)$ of equations of type $(\textnormal{id}-\Delta)^su=u^{\beta}$ in $\R^N$, $\beta>1$, if $q>\max\{\beta, \frac{N(\beta-1)}{2s}\}$. For this they use also the inverse operator. For classical positive solutions of equations of type $(-\Delta)^su=f(u)$ in $\R^N$ radial symmetry was also analyzed in \cite{FW13-2}.

As before, our proof relies only on monotonicity and symmetry properties of the kernel function $k$. In particular, the inverse operator is not needed for our arguments. Moreover, we note that our approach only requires the solution $u$ to be nonnegative.

The paper is organized as follows. In Section \ref{setup} we will collect basic statements on nonlocal bilinear forms which we will need for our proves. Section \ref{mp} is devoted to a linearized form of problem (P') based on the difference of the solution and its reflected counterpart w.r.t. some hyperplane. In particular, we will prove different variants of maximum principles involving antisymmetric functions. The results stated in this section can be seen as generalizations to the results in \cite{JW14}. Section \ref{foliatedsymmgoal} is devoted to the proves of our axial symmetry results and in Section \ref{functional} we will apply these results to prove axial symmetry of global minimizers. Finally, in Section \ref{mr} we will give the proof of Theorem \ref{result1}.

\section*{Acknowledgments} The author thanks J.~Van~Schaftingen for the referenced inequality (\ref{pol}) below and T.~Weth for discussions and helpful comments.

\section{Preliminaries}\label{setup}
We fix some notation. For subsets $D,U \subset \R^N$ we write $\dist(D,U):= \inf\{|x-y|\::\: x \in D,\, y \in U\}$.  If $D= \{x\}$ is a singleton, we write $\dist(x,U)$
in place of $\dist(\{x\},U)$. For $U\subset\R^{N}$ and $r>0$ we consider $B_{r}(U):=\{x\in\R^{N}\;:\; \dist(x,U)<r\}$, and we let, as usual  
 $B_r(x)=B_{r}(\{x\})$ be the open ball in $\R^{N}$ centered at $x \in \R^N$ with radius $r>0$. Moreover, we denote $S^{N-1}:=\partial B_1(0)$. For any subset $M \subset \R^N$, we denote by $1_M: \R^N \to \R$ the
characteristic function of $M$ and by $\diam(M)$ the diameter of $M$. If $M$ is measurable $|M|$ denotes the Lebesgue measure of $M$. Moreover, if $w: M \to \R$ is a function,  we let $w^+= \max\{w,0\}$ resp. $w^-=-\min\{w,0\}$ denote the positive and negative part of $w$, respectively. 

Throughout the remainder of this paper, we assume the decreasing function $k:(0,\infty)\to[0,\infty)$ satisfies (k). We let $\cJ$ be the corresponding quadratic form defined in (\ref{bilinearform}) and, for an open set $\Omega \subset \R^N$, we consider $\cD^{J}(\Omega)$ as in (\ref{defispace}). It follows from (k) that $k$ is positive on a set of positive measure.  Thus, by   
\cite[Lemma 2.7]{FKV13} we have
\begin{equation}
\label{l2-bound}
\Lambda_{1}(\Omega):=\inf_{u\in \cD^{J}(\Omega)}\frac{\cJ(u,u)}{\|u\|^2_{L^{2}(\Omega)}} \: >\:0 \qquad \text{for every open bounded set  $\Omega\subset\R^{N}$,}
\end{equation}
which amounts to a Poincar\'e-Friedrichs type inequality. In particular, $\cD^{J}(\Omega)$ is a Hilbert space with scalar product $\cJ$ for any open bounded set $\Omega$. We will need lower bounds for $\Lambda_1(\Omega)$ in the case where $|\Omega|$ is small. For this we set for $\Omega\subset \R^N$ open

\begin{equation}\label{kappadefi}
\kappa_{\Omega}:\R^N\to [0,\infty], \qquad \kappa_{\Omega}(x)=\int_{\R^N\setminus \Omega}k(|x-y|)\ dy.
\end{equation}
Note that we have for any $u:\R^N\to \R$ measurable with $\cJ(u,u)<\infty$ and $\supp(u)\subset \Omega$

\[
\cJ(u,u)\geq \int_{\Omega}u^2(x)\kappa_{\Omega}(x)\ dx.
\]
 
\begin{lemma}[see \cite{JW14}, Lemma 2.1]\label{3-mengen-k}
We have $\Lambda_1(\Omega)\geq \inf\limits_{x\in \Omega} \kappa_{\Omega}(x)$ and
\[
\lim_{r\to 0}\;\;\inf_{\substack{\Omega\subset \R^N\\ |\Omega|=r}}\;\; \inf_{x\in \Omega}\; \kappa_{\Omega}(x)=\infty.
\]
\end{lemma}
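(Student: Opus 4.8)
\emph{Sketch of proof.} The plan is to obtain the first inequality at once from the pointwise lower bound on $\cJ$ recorded just before the statement, and to obtain the limit from a rearrangement (``bathtub'') comparison of $\Omega$ with a ball of the same volume.

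For $\Lambda_1(\Omega)\ge\inf_{x\in\Omega}\kappa_\Omega(x)$ I would take any $u\in\cD^{J}(\Omega)$ and use $\cJ(u,u)\ge\int_\Omega u^2(x)\kappa_\Omega(x)\,dx$ — which itself follows by discarding in the double integral defining $\cJ(u,u)$ everything except the part over $\Omega\times(\R^N\setminus\Omega)$, where $u(y)=0$ — then bound $\kappa_\Omega$ below by its infimum over $\Omega$, divide by $\|u\|_{L^2(\Omega)}^2$, and take the infimum over $u$. This step is routine.

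For the limit, the key step is the estimate that, for fixed $x$, among all sets of a prescribed measure $r$ the quantity $\int_{\R^N\setminus\Omega}k(|x-\cdot|)$ is smallest when $\Omega$ is the ball centred at $x$ of measure $r$. Concretely, set $\rho_r:=(r/|B_1(0)|)^{1/N}$ and
\[
g(r):=\int_{\R^N\setminus B_{\rho_r}(x)}k(|x-y|)\,dy=c_N\int_{\rho_r}^{\infty}k(t)\,t^{N-1}\,dt,
\]
with $c_N>0$ a dimensional constant; note $g(r)$ is independent of $x$ and, by the integrability half of (k) together with monotonicity of $k$ (so $\int_{\rho_r}^1 k(t)t^{N-1}\,dt\le k(\rho_r)/N$), one has $g(r)<\infty$ for $r>0$. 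I would then show $\kappa_\Omega(x)\ge g(r)$ whenever $|\Omega|=r$: writing $A:=\Omega\setminus B_{\rho_r}(x)$ and $B:=B_{\rho_r}(x)\setminus\Omega$ one has $|A|=|B|$, and since $\R^N\setminus\Omega$ and $\R^N\setminus B_{\rho_r}(x)$ share the common piece $\R^N\setminus(\Omega\cup B_{\rho_r}(x))$ (on which the integral of $k(|x-\cdot|)$ is finite, being $\le g(r)$), subtracting gives $\kappa_\Omega(x)-g(r)=\int_B k(|x-y|)\,dy-\int_A k(|x-y|)\,dy$. If the first integral is infinite then $\kappa_\Omega(x)=\infty\ge g(r)$; otherwise, since $k$ decreases, $k(|x-y|)\ge k(\rho_r)$ on $B\subset B_{\rho_r}(x)$ while $k(|x-y|)\le k(\rho_r)$ on $A\subset\R^N\setminus B_{\rho_r}(x)$, so the difference is $\ge k(\rho_r)(|B|-|A|)=0$. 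Taking the infimum over $x\in\Omega$ and over $\Omega$ with $|\Omega|=r$ yields $\inf_{|\Omega|=r}\inf_{x\in\Omega}\kappa_\Omega(x)\ge g(r)$. Finally, as $r\downarrow0$ we have $\rho_r\downarrow0$, so by monotone convergence and the divergence half of (k), $g(r)=c_N\int_{\rho_r}^{\infty}k(t)t^{N-1}\,dt\uparrow c_N\int_0^{\infty}k(t)t^{N-1}\,dt=\infty$, which proves the claim.

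The step I expect to require the most care is the comparison itself, because (k) does not force $k$ to be integrable near the origin (e.g. $k(r)=1_{[0,1]}(r)r^{-N}$), so one cannot write $\kappa_\Omega(x)=\int_{\R^N}k(|x-\cdot|)-\int_\Omega k(|x-\cdot|)$ as a difference of finite quantities; the argument must be carried out on the complements $\R^N\setminus\Omega$ and $\R^N\setminus B_{\rho_r}(x)$ directly, with the degenerate case $\int_{B_{\rho_r}(x)\setminus\Omega}k(|x-\cdot|)=\infty$ (which makes $\kappa_\Omega(x)=\infty$) disposed of separately as above.
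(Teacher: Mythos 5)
Your proof is correct, and it follows the same route as the source the paper cites for this lemma (\cite{JW14}, Lemma 2.1): the first inequality drops straight out of the pointwise bound $\cJ(u,u)\ge\int_\Omega u^2\kappa_\Omega$ recorded before the statement, and the limit comes from the comparison $\kappa_\Omega(x)\ge\int_{\R^N\setminus B_{\rho_r}(x)}k(|x-y|)\,dy$ for $|\Omega|=r$ together with the divergence condition in (k). Your care in carrying out the bathtub comparison on the complements (rather than subtracting two possibly infinite integrals) and in disposing of the case $\int_{B_{\rho_r}(x)\setminus\Omega}k(|x-\cdot|)=\infty$ separately is exactly what is needed, since (k) permits kernels that are non-integrable at the origin.
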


\begin{prop}[see \cite{JW14}, Proposition 2.3]
(i) We have $\cC^{0,1}_{c}(\R^{N}) \subset \cD^{J}(\R^N)$.\\[0.1cm]
(ii) Let $v \in \cC_c^2(\R^N)$. Then the principle value integral in (\ref{defiop}) exists for every $x \in \R^N$. Moreover, $I v \in L^\infty(\R^N)$,  and for every bounded open set $\Omega \subset \R^N$ and every $u \in \cD^{J}(\Omega)$ we have 
$$
\cJ(u,v)= \int_{\R^N} u(x) [Iv](x)\,dx.
$$
\end{prop}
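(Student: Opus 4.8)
\medskip
\noindent\textit{Proof proposal.} The plan is to base everything on the two integrability consequences of (k), namely $\int_{B_1(0)}|z|^2k(|z|)\,dz<\infty$ and $\int_{\R^N\setminus B_1(0)}k(|z|)\,dz<\infty$ (the latter since $\min\{1,r^2\}=1$ for $r\ge1$; moreover, since $k$ is decreasing and finite on $(0,\infty)$, $\int_{\R^N\setminus B_\eps(0)}k(|z|)\,dz<\infty$ for every $\eps>0$). For (i), I would note that $v\in\cC^{0,1}_c(\R^N)$ is trivially in $L^2(\R^N)$; taking $\supp v\subset B_\rho(0)$ and using that $(v(x)-v(y))^2k(|x-y|)$ vanishes unless $x$ or $y$ lies in $B_\rho(0)$, the symmetry of the kernel gives $\cJ(v,v)\le 2\int_{B_\rho(0)}\int_{\R^N}(v(x)-v(y))^2k(|x-y|)\,dy\,dx$; for fixed $x$ I split the inner integral at $|x-y|=1$, bounding the integrand by $[v]_{\mathrm{Lip}}^2|x-y|^2k(|x-y|)$ on $\{|x-y|<1\}$ (integrable by the first fact) and by $4\|v\|_\infty^2k(|x-y|)$ on $\{|x-y|\ge1\}$ (integrable by the second), getting a bound uniform in $x$, hence $\cJ(v,v)<\infty$.

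For the pointwise existence and boundedness in (ii), fix $v\in\cC_c^2(\R^N)$ and $x\in\R^N$. Since $k$ is radial, $\int_{\eps<|x-y|<1}\nabla v(x)\cdot(y-x)\,k(|x-y|)\,dy=0$ for all $\eps\in(0,1)$ (the integrand is odd about $x$), so I would rewrite
\[
 \int_{\eps<|x-y|<1}(v(x)-v(y))k(|x-y|)\,dy=\int_{\eps<|x-y|<1}\bigl(v(x)-v(y)+\nabla v(x)\cdot(y-x)\bigr)k(|x-y|)\,dy ;
\]
by Taylor's theorem the new integrand is bounded by $\tfrac12\|D^2v\|_\infty|x-y|^2k(|x-y|)$, and together with absolute convergence of $\int_{|x-y|\ge1}(v(x)-v(y))k(|x-y|)\,dy$ this shows the principal value $[Iv](x)$ exists and, crucially, that $[Iv]_\eps(x):=\int_{|x-y|>\eps}(v(x)-v(y))k(|x-y|)\,dy$ is bounded uniformly in $(x,\eps)$ by $\tfrac12\|D^2v\|_\infty\int_{|z|<1}|z|^2k(|z|)\,dz+2\|v\|_\infty\int_{|z|\ge1}k(|z|)\,dz$ and converges to $[Iv](x)$ as $\eps\to0$ for every $x$; in particular $Iv\in L^\infty(\R^N)$.

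For the identity, let $\Omega$ be bounded open and $u\in\cD^J(\Omega)$; then $u\equiv0$ off $\Omega$, so $u\in L^1(\R^N)$. By Cauchy--Schwarz and (i), $\cJ(u,v)$ is an absolutely convergent double integral, so $\cJ_\eps(u,v):=\int\!\!\int_{|x-y|>\eps}(u(x)-u(y))(v(x)-v(y))k(|x-y|)\,dx\,dy\to\cJ(u,v)$ by dominated convergence. On $\{|x-y|>\eps\}$ the kernel is integrable and $u\in L^1$, so Fubini applies and symmetrizing the kernel in $x\leftrightarrow y$ identifies $\cJ_\eps(u,v)$ with $\int_{\R^N}u(x)[Iv]_\eps(x)\,dx$; letting $\eps\to0$, the pointwise convergence $[Iv]_\eps\to[Iv]$ with the uniform $L^\infty$-bound above and $u\in L^1$ give, again by dominated convergence, $\int u[Iv]_\eps\to\int u[Iv]$, whence $\cJ(u,v)=\int_{\R^N}u(x)[Iv](x)\,dx$.

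The genuinely delicate point is this last identity. For kernels as singular as $|z|^{-N-2s}$ with $s\ge\tfrac12$ the inner integral $\int(v(x)-v(y))k(|x-y|)\,dy$ is only conditionally (principal-value) convergent, so one cannot apply Fubini directly to the symmetrized expression $\cJ(u,v)=c\int\!\!\int u(x)(v(x)-v(y))k$; the remedy is the $\eps$-truncation together with the first-order Taylor cancellation, which uses only the radiality of $k$ and is exactly what furnishes the $\eps$-uniform bounds needed for both limit passages. The hypothesis that $\Omega$ is bounded enters only through $u\in L^1(\R^N)$, which is what makes the truncated Fubini step and the final dominated-convergence step legitimate.
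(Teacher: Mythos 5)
Your proof is correct and follows what is essentially the standard (and, as far as one can tell, the cited) argument — the paper itself gives no proof here but refers to \cite{JW14}, Proposition 2.3: a second-order Taylor expansion combined with the radial cancellation of the first-order term yields existence of the principal value and the $\eps$-uniform $L^\infty$ bound, and the $\eps$-truncation with Fubini and two applications of dominated convergence yields the duality identity, with the boundedness of $\Omega$ entering exactly through $u\in L^1(\R^N)$ as you say. The one point to flag is normalization: your symmetrization gives $\cJ_\eps(u,v)=\int_{\R^N}u(x)[Iv]_\eps(x)\,dx$ only for the $\tfrac12$-normalized form of $\cJ$ used in Lemma \ref{sec:linear-problem-tech-1} and in the weak formulation, whereas the definition in (\ref{bilinearform}) carries no $\tfrac12$ and would produce an extra factor $2$ — an inconsistency of the paper rather than of your argument, but worth stating explicitly.
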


Next, we wish to extend the definition of $\cJ(v,\phi)$ to more general pairs of functions $(v,\phi)$. In the following, for a measurable subset $U' \subset \R^N$, we define $\cV^{J}(U')$ as the space of all functions $v \in L^2(\R^N)+ L^{\infty}(\R^{N})$ such that 
\begin{equation}\label{subset}
\rho(v,U'):=  \int_{U'}\int_{U'}(v(x)-v(y))^2J(x-y)\ dxdy<\infty.
\end{equation}
Note that $\cD^{J}(U) \subset \cV^{J}(U')$ for any pair of open subsets $U,U' \subset \R^N$.

\begin{lemma}[see \cite{JW14}, Lemma 2.5]
\label{sec:linear-problem-tech-1}  
Let $U' \subset \R^N$ be an open set and $v,\phi \in \cV^{J}(U')$. Moreover, suppose that $\phi \equiv 0$ on $\R^N \setminus U$ for some open bounded subset $U \subset U'$ with $\dist(U,\R^N \setminus U')>0$. Then 
\begin{equation}
  \label{eq:finite-int}
\int_{\R^N} \int_{\R^N} |v(x)-v(y)| |\phi(x)-\phi(y)|k(|x-y|) \,dx dy < \infty,  
\end{equation}
and thus 
$$
\cJ(v,\phi) := \frac{1}{2} \int_{\R^N} \int_{\R^N} (v(x)-v(y)) (\phi(x)-\phi(y))k(|x-y|) \,dx dy  
$$
is well defined.   
\end{lemma}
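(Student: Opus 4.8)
The plan is to decompose $\R^N\times\R^N$ according to the positions of $x$ and $y$ relative to $U$ and $U'$, and to treat the near-diagonal contribution (where $k$ may be non-integrable) by a genuinely different device from the contribution away from the diagonal. First I would note that, since $\phi\equiv 0$ on $\R^N\setminus U$, the integrand in \eqref{eq:finite-int} vanishes whenever both $x\notin U$ and $y\notin U$; using the symmetry of the integrand under $x\leftrightarrow y$, it therefore suffices to prove
\[
A:=\int_{U}\int_{\R^N}|v(x)-v(y)|\,|\phi(x)-\phi(y)|\,k(|x-y|)\,dy\,dx<\infty,
\]
after which the integral in \eqref{eq:finite-int} is bounded by $2A$. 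I would then split the inner integration over $y$ into the part $y\in U'$ and the part $y\in\R^N\setminus U'$.

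For the near-diagonal part $x\in U$, $y\in U'$, the idea is simply Cauchy--Schwarz with respect to the measure $k(|x-y|)\,dx\,dy$ on $U\times U'$, which gives the bound
\[
\Bigl(\int_U\!\int_{U'}(v(x)-v(y))^2k(|x-y|)\,dx\,dy\Bigr)^{1/2}\Bigl(\int_U\!\int_{U'}(\phi(x)-\phi(y))^2k(|x-y|)\,dx\,dy\Bigr)^{1/2}.
\]
Since $U\subseteq U'$, the two factors are at most $\rho(v,U')^{1/2}$ and $\rho(\phi,U')^{1/2}$, respectively, which are finite by the assumption $v,\phi\in\cV^J(U')$. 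This is the only place where the non-integrability of $k$ at the origin (the requirement $\int_0^\infty k(r)r^{N-1}\,dr=\infty$ in (k)) is felt, and it is exactly absorbed by the finiteness of the Gagliardo-type forms $\rho(\cdot,U')$; in particular no pointwise bound on $v(x)-v(y)$ near the diagonal is available or needed.

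For the far part $x\in U$, $y\in\R^N\setminus U'$, I would set $\delta:=\dist(U,\R^N\setminus U')>0$, so that $|x-y|\ge\delta$ throughout this region and, moreover, $\phi(y)=0$ there, whence the integrand reduces to $|v(x)-v(y)|\,|\phi(x)|\,k(|x-y|)$. Two elementary consequences of (k) are needed: splitting $\int_\delta^\infty k(r)r^{N-1}\,dr$ at $r=1$ and using $k(r)r^{N-1}\le\delta^{-2}r^2k(r)r^{N-1}$ on $[\delta,1]$ yields $c_\delta:=\int_{|z|\ge\delta}k(|z|)\,dz<\infty$, while the monotonicity of $k$ gives $k(|z|)\le k(\delta)$ for $|z|\ge\delta$, hence $\int_{|z|\ge\delta}k(|z|)^2\,dz\le k(\delta)\,c_\delta<\infty$. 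Writing $v=v_1+v_2$ with $v_1\in L^2(\R^N)$ and $v_2\in L^\infty(\R^N)$, and bounding $|v(x)-v(y)|\le|v_1(x)|+|v_1(y)|+2\|v_2\|_\infty$, the far part breaks into the terms carrying $|v_1(x)|+2\|v_2\|_\infty$, which contribute at most $c_\delta\int_U|\phi(x)|\bigl(|v_1(x)|+2\|v_2\|_\infty\bigr)\,dx$, and the term carrying $|v_1(y)|$, for which Cauchy--Schwarz in $y$ together with $\int_{|x-y|\ge\delta}k(|x-y|)^2\,dy\le k(\delta)c_\delta$ gives the bound $\|v_1\|_{L^2(\R^N)}(k(\delta)c_\delta)^{1/2}\int_U|\phi(x)|\,dx$. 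Since $U$ is bounded, $v_1|_U$ and $\phi|_U$ lie in $L^2(U)\subseteq L^1(U)$, so all of these quantities are finite; combined with the near-diagonal estimate this yields $A<\infty$, hence \eqref{eq:finite-int}, and then $\cJ(v,\phi)$ is an absolutely convergent integral and therefore well defined.

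The argument is mostly bookkeeping, and the main (modest) obstacle is to arrange it so that the singular near-diagonal contribution is routed entirely through the finiteness of $\rho(v,U')$ and $\rho(\phi,U')$ --- one cannot afford any crude pointwise estimate there --- while the decomposition $v=v_1+v_2$ is used only on the region $|x-y|\ge\delta$, where $k$ is bounded and integrable. The compact containment $U\subset\subset U'$ (via $\delta>0$) is precisely what makes these two regimes together exhaust $\R^N\times\R^N$ on the support of the integrand.
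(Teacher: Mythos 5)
Your proof is correct and follows essentially the same route as the argument in \cite{JW14} that the paper cites for this lemma: Cauchy--Schwarz against the Gagliardo forms $\rho(\cdot,U')$ on the near-diagonal region, and integrability of $k$ away from the origin (extracted from (k) via the $\min\{1,r^2\}$ condition) combined with the $L^2+L^\infty$ decomposition of $v$ on the region $|x-y|\ge\dist(U,\R^N\setminus U')$, where $\phi$ vanishes at one of the two points. I see no gaps.
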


 \begin{lemma}[see \cite{JW14}, Lemma 2.6]\label{3-cutoff}
If $U' \subset \R^N$ is open and $v\in \cV^J(U')$, then $v^\pm \in \cV^J(U')$ and $\rho(v^\pm,U') \le \rho(v,U')$. In addition, if $v\in \cV^J(\R^N)$, then $0\geq \cJ(v^+,v^-)> -\infty$.
\end{lemma}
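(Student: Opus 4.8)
The plan is to reduce both assertions to elementary pointwise inequalities for the positive and negative parts. First I would record the elementary inequality
\[
(a^+-b^+)^2+(a^--b^-)^2\le (a-b)^2\qquad\text{for all }a,b\in\R,
\]
which is checked by distinguishing the cases $a,b\ge0$, $a,b\le0$ and $a\ge0\ge b$ (together with the symmetric one); in the mixed case one uses $ab\le 0$, so that $(a-b)^2=a^2-2ab+b^2\ge a^2+b^2=(a^+-b^+)^2+(a^--b^-)^2$. Applying this with $a=v(x)$, $b=v(y)$ and integrating over $U'\times U'$ against the nonnegative kernel $k(|x-y|)$ gives at once
\[
\rho(v^+,U')+\rho(v^-,U')\le\rho(v,U')<\infty ,
\]
so in particular $\rho(v^\pm,U')\le\rho(v,U')<\infty$.

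To finish the first claim I would verify $v^\pm\in L^2(\R^N)+L^\infty(\R^N)$. Writing $v=v_1+v_2$ with $v_1\in L^2(\R^N)$ and $v_2\in L^\infty(\R^N)$, the $1$-Lipschitz continuity of $t\mapsto t^+$ yields $|v^+-v_2^+|=|(v_1+v_2)^+-v_2^+|\le |v_1|$, hence $v^+=(v^+-v_2^+)+v_2^+$ with $v^+-v_2^+\in L^2(\R^N)$ and $v_2^+\in L^\infty(\R^N)$; the same argument applies to $v^-$. Combined with the bound on $\rho(v^\pm,U')$ above, this gives $v^\pm\in\cV^J(U')$.

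For the second claim, assume now $v\in\cV^J(\R^N)$. Since $v^+(x)v^-(x)=0$ for every $x$, one has the pointwise identity
\[
(v^+(x)-v^+(y))(v^-(x)-v^-(y))=-v^+(x)v^-(y)-v^+(y)v^-(x)\le 0 ,
\]
using $v^\pm\ge0$. On the other hand, $|ab|\le\tfrac12(a^2+b^2)$ with $a=v^+(x)-v^+(y)$, $b=v^-(x)-v^-(y)$ shows that the absolute value of this integrand, multiplied by $k(|x-y|)$, is dominated by $\tfrac12[(v^+(x)-v^+(y))^2+(v^-(x)-v^-(y))^2]k(|x-y|)$, whose integral over $\R^N\times\R^N$ equals $\tfrac12(\rho(v^+,\R^N)+\rho(v^-,\R^N))$ and is finite by the first part applied with $U'=\R^N$. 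Hence the integral defining $\cJ(v^+,v^-)$ is absolutely convergent, so $\cJ(v^+,v^-)$ is well defined and finite, and the pointwise sign above forces $\cJ(v^+,v^-)\le0$; that is, $0\ge\cJ(v^+,v^-)>-\infty$.

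I do not expect a genuine obstacle here. The only points needing a little care are the membership $v^\pm\in L^2(\R^N)+L^\infty(\R^N)$, handled by the Lipschitz splitting above, and the fact that the well-definedness of $\cJ(v^+,v^-)$ cannot be deduced from Lemma~\ref{sec:linear-problem-tech-1} (since $v^-$ need not vanish outside a bounded set) but must instead be obtained directly from the absolute integrability just established.
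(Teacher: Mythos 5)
Your proof is correct and follows essentially the same route as the cited source (the paper itself only refers to \cite{JW14}, Lemma 2.6): the pointwise identity $(a-b)^2=(a^+-b^+)^2+(a^--b^-)^2+2\bigl(a^+b^-+a^-b^+\bigr)$ underlying your case analysis is exactly the standard argument, and it yields both $\rho(v^\pm,U')\le\rho(v,U')$ and the sign and absolute convergence of $\cJ(v^+,v^-)$. Your two points of care --- the Lipschitz splitting showing $v^\pm\in L^2(\R^N)+L^\infty(\R^N)$, and deriving well-definedness of $\cJ(v^+,v^-)$ from absolute integrability rather than from Lemma~\ref{sec:linear-problem-tech-1} --- are exactly right.
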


\section{A linear problem associated with a hyperplane reflection}\label{mp}

In the following, we put $\cH$ as the set of all open affine half spaces in $\R^{N}$ and denote $Q_H(x)$ as the reflection of $x\in \R^N$ at $\partial H$ w.r.t. a given half space $H\in \cH$. Moreover, we put $\cH_0:=\{H\in \cH\;:\; 0\in \partial H\}$ and for $p\in \R^N$ we put
\begin{equation}\label{def:hp}
\cH_0(p):=\{H\in \cH_0\;:\; p\in H\}\quad \text{ and }\quad H_p:=\{x\in \R^N\;:\; x\cdot p>0\}\in \cH_0(p).
\end{equation}
For the sake of brevity, we sometimes write $\bar x$ in place of $Q_H(x)$ for $x \in \R^N$. Given $H\in \cH$, a function $v:\R^{N}\to \R^{N}$ is called  antisymmetric (with respect to $H$) if $v(\bar x)=-v(x)$ for $x \in \R^{N}$. 

\begin{bem}\label{symmetry-need}
We note that since $k$ satisfies (k) we have for any $H\in \cH$:
\begin{enumerate}
\item[(J1)] $k(|x-y|)=k(|\bar{x}-\bar{y}|)$ for all $x,y\in \R^N$ and
\item[(J2)] $k(|x-y|)\geq k(|x-\bar{y}|)$ for all $x,y\in H$
\end{enumerate}
Moreover, by (k) there is $r_0>0$ such that $k|_{(0,r_0))}$ is strictly decreasing. Thus we also have the following strict variant of (J2):
\begin{enumerate}
\item[(J3)] $\underset{y\in B_r(x)}\essinf\ (k(|x-y|) - k(|x- \bar y|))>0$ for all $x\in H$ and $r< \min\{r_0,\dist(x,\partial H)\}$.
 \end{enumerate}
 \end{bem}

The following is similar to but more general than Lemma 3.2 in \cite{JW14}

\begin{lemma}[see Lemma 4.7, \cite{J15}]\label{sec:linear-problem-tech}
Let $U' \subset \R^N$ be an open set with $Q_H(U')=U'$, and let $v \in \cV^{J}(U')$ be an antisymmetric function such that there is $\kappa\geq0$ with $v \ge -\kappa$ on $H \setminus U$ for some open bounded set $U \subset H$ with 
$\overline U \subset U'$.  Then the function $w:= 1_H\, (v+\kappa)^-$ is contained in $\cD^{J}(U)$ and satisfies 
\begin{equation}
  \label{eq:key-ineq}
\cJ(w,w) \le - \cJ(v,w) -2\kappa \int_H w(x) \kappa_H(x)\ dx.
\end{equation}
\end{lemma}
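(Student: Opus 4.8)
The plan is to follow the standard "splitting at the hyperplane" computation for antisymmetric functions, but carefully keeping track of the extra constant $\kappa$. First I would argue that $w := 1_H\,(v+\kappa)^-$ lies in $\cD^J(U)$: by hypothesis $v \ge -\kappa$ on $H\setminus U$, so $(v+\kappa)^-$ vanishes on $H\setminus U$ and hence $w$ is supported in $\overline U \subset H$; combining Lemma~\ref{3-cutoff} (which gives $(v+\kappa)^\pm \in \cV^J(U')$ with controlled $\rho$) with the fact that $v+\kappa \in \cV^J(U')$ and that $U$ is bounded with positive distance to $\R^N\setminus U'$, one gets $w\in L^2$ with $\cJ(w,w)<\infty$, so $w\in\cD^J(U)$. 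At this point $\cJ(v,w)$ is well-defined by Lemma~\ref{sec:linear-problem-tech-1} since both $v$ and $w$ lie in $\cV^J(U')$ and $w$ is compactly supported inside $U'$.

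The core of the argument is a direct expansion of $\cJ(v,w)$ using the antisymmetry of $v$ and the decomposition $\R^N = H \cup Q_H(H) \cup \partial H$. Writing $\bar x = Q_H(x)$ and using $v(\bar x) = -v(x)$ together with $w(\bar x) = 0$ for $x\in H$ and $w$ supported in $H$, the double integral $\cJ(v,w)$ splits into integrals over $H\times H$, $H\times Q_H(H)$, $Q_H(H)\times H$, and by the change of variables $y\mapsto \bar y$ one folds everything onto $H\times H$. The outcome should be an identity of the schematic form
\begin{equation*}
\cJ(v,w) = \int_H\int_H (v(x)-v(y))(w(x)-w(y))\,k(|x-y|)\,dxdy + \int_H\int_H (v(x)+v(y))(w(x)-w(y))\,k(|x-\bar y|)\,dxdy + 2\kappa\!\int_H w(x)\kappa_H(x)\,dx,
\end{equation*}
where the last term collects the contribution of the "nonlocal boundary" $\kappa_H(x) = \int_{\R^N\setminus H} k(|x-y|)\,dy$ and arises precisely because $v$ is shifted by $\kappa$: on the support of $w$ one has $v = -w - \kappa + v^+ \le -w-\kappa$ in the relevant sense, so the terms pairing $w(x)$ with the value "$\kappa$" survive. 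I would then bound each folded kernel integral from above by $-\cJ(w,w)$, using that on $\{w>0\}$ one has $v+\kappa = -w$ and $v^+ = 0$, so that $(v(x)-v(y))(w(x)-w(y))$ and $(v(x)+v(y))(w(x)-w(y))$ are each $\le -(w(x)-w(y))^2 k(|x-y|)$ after exploiting $k(|x-y|)\ge k(|x-\bar y|)$ from (J2); summing gives exactly \eqref{eq:key-ineq}.

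The main obstacle I anticipate is the bookkeeping in the folding step: one must be careful that $w$ is genuinely zero outside $H$ and at reflected points, that the cross terms involving $v^+$ (which is nonnegative and supported where $w=0$) contribute with the correct sign rather than being discarded, and that the constant $\kappa$ is threaded through correctly so the coefficient of $\int_H w\,\kappa_H$ comes out as $-2\kappa$ and not $-\kappa$ or $-4\kappa$. A secondary technical point is justifying all the integral manipulations (Fubini, change of variables) under only the integrability afforded by Lemma~\ref{sec:linear-problem-tech-1} rather than absolute convergence of each piece separately; this is handled by first truncating $v$ (e.g. replacing $v$ by $v$ cut off at large radius, or by working with $v^+$ and $v^-+\kappa$ separately as in Lemma~\ref{3-cutoff}) and passing to the limit. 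Since this is essentially the computation of Lemma~3.2 in \cite{JW14} with the constant $\kappa$ carried along, I would lean on that reference for the routine estimates and only spell out the new terms involving $\kappa$.
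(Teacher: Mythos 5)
Your overall strategy --- fold the bilinear form onto $H\times H$ using antisymmetry and (J1)--(J2), exploit the sign structure of $v+\kappa$ on the support of $w$, and track the $\kappa$-shift through the reflection --- is the same as the paper's, but two of your steps do not hold up as written. First, the membership $w\in\cV^J(U')$ (needed before you may invoke Lemma \ref{sec:linear-problem-tech-1} to define $\cJ(v,w)$, and a fortiori for $w\in\cD^J(U)$) does not follow from Lemma \ref{3-cutoff} alone: that lemma controls positive and negative parts, but $w=1_H(v+\kappa)^-$ also involves multiplication by $1_H$, and $(v+\kappa)^-$ does \emph{not} vanish on $\R^N\setminus H$ (there $(v(x)+\kappa)^-=(\kappa-v(\bar x))^-$, which is positive wherever $v(\bar x)>\kappa$), so $w$ is not itself a negative part of a function already known to lie in $\cV^J(U')$. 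The paper spends the first half of its proof on exactly this point, establishing $\rho(1_Hv,U')\le\rho(v,U')$ by a folding computation that uses $(a-b)^2+(a+b)^2\ge 2a^2$ together with (J1)--(J2); only then does Lemma \ref{3-cutoff} apply to $1_Hv+\kappa$ to give $w=(1_Hv+\kappa)^-\in\cV^J(U')$. You need to supply this estimate rather than cite Lemma \ref{3-cutoff}.

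Second, your ``schematic identity'' cannot be the right starting point: the term $\int_H\int_H(v(x)+v(y))(w(x)-w(y))k(|x-\bar y|)\,dx\,dy$ is identically zero, because $|x-\bar y|=|\bar x-y|$ makes the integrand odd under the swap $x\leftrightarrow y$. The correct folding of the cross contribution from $H\times(\R^N\setminus H)$ pairs $w(x)$ \emph{alone} (not $w(x)-w(y)$) with $v(x)+v(y)$, and the factor $2\kappa$ arises from the relation $\tilde v(y)=-\tilde v(\bar y)+2\kappa$ for $y\notin H$, where $\tilde v=v+\kappa$. The clean way to organize the computation (the paper's route) is the pointwise identity $(w+\tilde v)\,w\equiv 0$ on $\R^N$, which yields $\cJ(w,w)+\cJ(v,w)=-\int_H\int_{\R^N}w(x)\,[w(y)+\tilde v(y)]\,k(|x-y|)\,dy\,dx$; one then splits the inner integral over $H$ and $\R^N\setminus H$, reflects, and applies (J2). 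Relatedly, your closing step ``bound each folded integral by $-\cJ(w,w)$ and sum'' does not produce \eqref{eq:key-ineq}: two such bounds would sum to $-2\cJ(w,w)$, and in any case this discards rather than retains the term $-2\kappa\int_H w\,\kappa_H$ that the lemma asserts. The plan is recoverable, but both the integrability step and the central identity must be redone along the lines above.
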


 \begin{proof}
 Since $v$ is antisymmetric we have by (J1), (J2) and the symmetry of $U'$
 \begin{align}
  &\rho(v,U') =\int_{U'\cap H}\int_{U'\cap H}(v(x)-v(y))^2 k(|x-y|)\ dxdy  \notag\\
 &\qquad\qquad+\int_{U'\setminus H}\int_{U'\setminus H}(v(x)-v(y))^2 k(|x-y|)\ dxdy+2\int_{U'\setminus H}\int_{U'\cap H}(v(x)-v(y))^2 k(|x-y|)\ dxdy  \notag\\
 &=2\int_{U'\cap H}\int_{U'\cap H}\Bigl[(v(x)-v(y))^2 k(|x-y|) + (v(x)+v(y))^2 k(|x-\bar y|)\Bigr]\ dxdy  \notag\\
 &\geq \int_{U'\cap H}\int_{U'\cap H} \Bigl[(v(x)-v(y))^2 k(|x- y|) + [(v(x)-v(y))^2 + (v(x)+v(y))^2] k(|x-\bar y|)\Bigr]\ dxdy  \notag\\ 
 &\geq \int_{U'\cap H}\int_{U'\cap H} \Bigl[(v(x)-v(y))^2 k(|x-y|)+ 2v^2(x) k(|x-\bar y|)\Bigr]\ dxdy  \notag\\ 
 &= \int_{U'}\int_{U'}(1_{H}v(x)-1_{H}v(y))^2 k(|x-y|)\ dxdy = \rho(1_{H}\,v ,U') \label{bound3}
 \end{align}
 Since $\rho(1_H v,U')=\rho(1_H v+\kappa,U')$ we have $1_Hv+\kappa\in \cV^J(U')$. And thus, since $v\geq -\kappa$ in $H\setminus U$ and $\kappa\geq 0$, we have $(1_Hv +\kappa)^-=1_H(v+\kappa)^-$. Hence by Lemma \ref{3-cutoff}  $w \in \cV^{J}(U')$. Since $w \equiv 0$ in $\R^N \setminus U$ and $\dist(U, \R^{N}\setminus U')>0$, $\cJ(v,w)$ is well defined and finite by Lemma~\ref{sec:linear-problem-tech-1}. To show  (\ref{eq:key-ineq}) we first note that with $\tilde{v}=v+\kappa$ we have
 $$
 [w+\tilde{v}]w= [ 1_H \tilde{v}^{\ +} + 1_{\R^N \setminus H} \tilde{v}]1_H \tilde{v}^{\ -} \equiv 0 \qquad  \text{on $\R^N$}
 $$
 and therefore  
 $$
 [w(x)-w(y)]^2 + [v(x)-v(y)] [w(x)-w(y)] = - \Bigl(w(x)[w(y)+\tilde{v}(y)] + w(y)[w(x)+\tilde{v}(x)]\Bigr)
 $$
 for $x,y \in \R^N$. Using this identity in the following together with the antisymmetry of $v$, the symmetry properties of $k$ and the fact that $w \equiv 0$ on $\R^N \setminus H$, we find that 
 \begin{align}
 \cJ(w,w) + &\cJ(v,w) = \cJ(w,w) + \cJ(\tilde{v},w)\notag\\
 &=-\int_{H} \int_{\R^N}w(x) [w(y)+\tilde{v}(y)] k(|x-y|)\,dy dx\\
  &=-\int_{H} \int_{\R^N}w(x) [1_H(y)\tilde{v}^{\ +}(y)  +1_{\R^N \setminus H} \tilde{v}(y)] k(|x-y|)\,dy dx\notag\\
 &=-\int_{H} \int_{H}w(x) [\tilde{v}^{\ +}(y)k(|x- y|) +(-v(y)+\kappa) k(|x-\bar y|)] \,dy dx \notag\\
 &=-\int_{H} \int_{H}w(x) [\tilde{v}^{\ +}(y)k(|x- y|)+ (-\tilde{v}(y)+2\kappa) k(|x-\bar y|)] \,dy dx\notag \\
&\leq -\int_{H} \int_{H}w(x) [\tilde{v}^{\ +}(y)\left(k(|x- y|)-k(|x-\bar y|)\right) +2\kappa k(|x-\bar y|)] \,dy dx \:\le\: 0, \label{ineq:lowerbound}
  \end{align}
 where in the last step we used (J2). Hence $\cJ(w,w)\leq -\cJ(v,w)$ and thus $\cJ(w,w)<\infty$. Since $w \equiv 0$ on $\R^N \setminus U$, it thus follows that $w \in \cD^{J}(U)$. Thus also the right hand side of (\ref{ineq:lowerbound}) is finite and hence (\ref{eq:key-ineq}) is true.
 \end{proof}

In order to implement the moving plane method and the rotating plane method, we have to deal with antisymmetric supersolutions of a class of linear problems. A related notion was introduced in \cite{JW13,JW14,J15}.

\begin{defi}\label{3-defi-anti}
Let $H\in \cH$, $U\subset H$ be an open set and let $c\in L^{\infty}(U)$.  We call an antisymmetric
function $v: \R^N \to \R^N$ an \textit{antisymmetric supersolution of the problem}
\begin{equation}\label{linear-prob}
Iv= c(x)v\quad \text{ in $U$,}\quad  v \equiv 0 \;\; \text{on $H \setminus U$.}
\end{equation}
if $v \in \cV^{J}(U')$ for some open bounded set $U' \subset \R^N$ with $Q_H(U')=U'$ and $\overline U \subset U'$,  $v\geq 0$ on $H\setminus U$, 
$ \liminf_{\substack{|x|\to\infty\\ x\in H}}v(x)\geq 0$ and
\begin{equation}\label{3-eq-sol2}
\cJ(v,\phi)\geq \int_{U}c(x)v(x)\varphi(x)\ dx \;\; \text{ for all $\varphi\in \cD^{J}(U)$, $\varphi\geq0$ with compact support in $\R^N$.} 
\end{equation}
\end{defi}

\begin{bem}\label{3-anti}
Let (k) be satisfied and assume $f:\R\to \R$ satisfies (F1). Then we have the following. If $u \in \cD^J(\Omega)$ is a solution of 
\[
 \qquad\left\{\begin{aligned}
  I u&=f(u)&& \text{ in $\Omega$;}\\
	u&\equiv 0&& \text{ on $\R^{N}\setminus \Omega$,}\\
 \end{aligned}\right.
\]
 then for $H\in \cH$ with $Q_H(\Omega\cap H)\subset \Omega$ we have that $v:=u\circ Q_H-u$ is an antisymmetric supersolution of (\ref{linear-prob}) with $U=\Omega\cap H$ and $c \in L^\infty(U)$ given for $x\in U$ by 
$$
c(x)= \left \{
  \begin{aligned}
  &\frac{f(u(\bar x))-f(u(x))}{v(x)}&&\qquad \text{if $v(x) \not= 0$;}\\
  &0 &&\qquad \text{if $v(x)= 0$.}
  \end{aligned}
\right.
$$
Indeed, since $u\in \cD^{J}(\Omega)\subset\cD^J(\R^N)$, we have $v \in \cD^{J}(\R^N)$ and thus $v \in \cV^{J}(U')$ for any open set $U' \subset \R^N$. Moreover, $\lim_{\substack{ |x|\to \infty \\ x\in H}} v(x)=0$ is satisfied since $u$ is satisfies $\lim\limits_{|x|\to \infty}u(x)=0$. Furthermore, if $\varphi\in \cD^{J}(U)$ with compact support in $\R^N$, then $\varphi\circ Q_H-\varphi\in \cD^J(\Omega)$ and this function has compact support in $\R^N$. Moreover, if $\varphi \ge 0$ then we have by (J1) 
\begin{align*}
&\cJ(v,\phi)= \cJ(u \circ Q_H - u,\phi)=\cJ(u,\phi \circ Q_H - \phi)= 
\int_{\R^N}f(u(x))[\varphi (Q_H(x)) - \varphi(x)]\,dx\\
&=\!\!\!\int_{ \R^{N}\setminus H}\!\!\!f(u(x))\varphi \circ Q_H\,dx - \int_{H}\!\!f(u(x))\varphi\,dx=\!\!\int_{H}[f(u(\bar x))-f(u(x))]\varphi(x)\,dx = \int_{H}c(x) v(x) \varphi(x)\,dx.
\end{align*}
The boundedness of $c$ follows from (F1).\\
Note that the same calculation holds if $\Omega$ is radial, $H\in \cH_0$ and $f:[0,\infty)\times \R\to \R$ satisfies (F1).
\end{bem} 

Next we present some variants of maximum principles for antisymmetric supersolutions of (\ref{linear-prob}). 

\begin{prop}[see \cite{JW14}, Proposition 3.5]\label{weak1}
Let $U \subset H$ be an open bounded set and let $c\in L^{\infty}(U)$ with $\|c^+\|_{L^\infty(U)} <\Lambda_1(U)$, where $\Lambda_1(U)$ is given in (\ref{l2-bound}).\\
Then every antisymmetric supersolution $v$ of (\ref{linear-prob}) in $U$  with $v\geq 0$ in $H\setminus U$ satisfies $v\geq 0$ a.e. in $H$. 
\end{prop}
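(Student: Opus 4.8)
The plan is to argue by contradiction, testing the supersolution inequality against the natural candidate $w := 1_H\,v^-$ and using Lemma \ref{sec:linear-problem-tech} with $\kappa = 0$. First I would note that since $v$ is an antisymmetric supersolution of (\ref{linear-prob}) with $v \ge 0$ on $H \setminus U$, we may apply Lemma \ref{sec:linear-problem-tech} with $\kappa = 0$: the function $w = 1_H v^-$ lies in $\cD^J(U)$ and satisfies $\cJ(w,w) \le -\cJ(v,w)$. Next, I would justify that $w$ itself is an admissible test function in the supersolution inequality (\ref{3-eq-sol2}): indeed $w \in \cD^J(U)$, $w \ge 0$, and $w$ has compact support (as $U$ is bounded), so by density/approximation one gets $\cJ(v,w) \ge \int_U c(x)v(x)w(x)\,dx$. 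Now on the set where $w > 0$ we have $v = -v^- = -w < 0$, hence $v(x)w(x) = -w(x)^2$, so that $\int_U c(x)v(x)w(x)\,dx = -\int_U c(x) w(x)^2\,dx$. Combining these gives
\[
\cJ(w,w) \;\le\; -\cJ(v,w) \;\le\; -\int_U c(x)v(x)w(x)\,dx \;=\; \int_U c(x) w(x)^2\,dx \;\le\; \|c^+\|_{L^\infty(U)} \|w\|_{L^2(U)}^2.
\]

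On the other hand, the Poincaré–Friedrichs inequality (\ref{l2-bound}) gives $\cJ(w,w) \ge \Lambda_1(U) \|w\|_{L^2(U)}^2$ since $w \in \cD^J(U)$. Putting the two bounds together yields
\[
\Lambda_1(U)\,\|w\|_{L^2(U)}^2 \;\le\; \|c^+\|_{L^\infty(U)}\,\|w\|_{L^2(U)}^2,
\]
and since $\|c^+\|_{L^\infty(U)} < \Lambda_1(U)$ by hypothesis, this forces $\|w\|_{L^2(U)}^2 = 0$, i.e.\ $w \equiv 0$ a.e. Hence $v^- \equiv 0$ on $H$, which is exactly the claim $v \ge 0$ a.e.\ in $H$.

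The step I expect to require the most care is the verification that $w = 1_H v^-$ is genuinely an admissible test function in (\ref{3-eq-sol2}): the definition only grants the inequality for $\varphi \in \cD^J(U)$ with compact support, and while $w$ has compact support because $U$ is bounded, one should confirm $w \in \cD^J(U)$ (this is the content of Lemma \ref{sec:linear-problem-tech}) and that the inequality extends from, say, smooth or Lipschitz test functions to all nonnegative elements of $\cD^J(U)$ by an approximation argument, using Lemma \ref{sec:linear-problem-tech-1} to control the bilinear form $\cJ(v,w)$. The remaining ingredients — the sign computation $vw = -w^2$ on $\{w>0\}$, the $L^\infty$ bound on $c^+$, and the Poincaré inequality — are routine. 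I would also remark that no use of the decay/behavior at infinity is needed here since $U$ is bounded; the condition $\liminf_{|x|\to\infty, x\in H} v(x) \ge 0$ in Definition \ref{3-defi-anti} plays no role in this bounded case.
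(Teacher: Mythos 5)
Your argument is correct and is essentially the proof of \cite[Proposition 3.5]{JW14} that the paper cites: apply Lemma \ref{sec:linear-problem-tech} with $\kappa=0$ to get $w=1_H v^-\in\cD^J(U)$ and $\cJ(w,w)\le-\cJ(v,w)$, test (\ref{3-eq-sol2}) with $w$ (admissible directly by Definition \ref{3-defi-anti}, no approximation needed), and close with the Poincar\'e bound (\ref{l2-bound}). No issues.
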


\begin{lemma}[see \cite{J15}]\label{weak2}
Let $H\in \cH$ and let $U\subset H$ be an open set. Let $c\in L^{\infty}(U)$ with $c\leq 0$ in $U$. If $v$ is an antisymmetric supersolution of (\ref{linear-prob}) in $U$, then $v\geq 0$ a.e. in $H$.
\end{lemma}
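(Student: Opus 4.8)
The plan is to show that $w:=1_H v^-$ vanishes almost everywhere in $H$, which is exactly the assertion $v\ge 0$ a.e.\ in $H$. Following \cite{J15}, I would combine the key inequality of Lemma~\ref{sec:linear-problem-tech} with the supersolution inequality \eqref{3-eq-sol2}. The new feature compared with Proposition~\ref{weak1} is that the sign condition $c\le 0$ removes the need for any smallness of $c$: no bound on $\Lambda_1(U)$ is required, and $U$ may even be unbounded.

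Fix $\kappa>0$. Since $v$ is an antisymmetric supersolution, $v\ge 0$ on $H\setminus U$ and $\liminf_{|x|\to\infty,\,x\in H}v(x)\ge 0$, so there is $R>0$ with $v\ge-\kappa$ on $H\setminus B_R(0)$, hence on $H\setminus U_R$, where $U_R:=U\cap B_R(0)$ is open and bounded. Applying Lemma~\ref{sec:linear-problem-tech} with $U_R$ in place of $U$ produces $w_\kappa:=1_H(v+\kappa)^-\in\cD^J(U_R)$, a nonnegative function with compact support contained in $\overline{B_R(0)}$, such that
\begin{equation}\label{plan-key2}
\cJ(w_\kappa,w_\kappa)\le -\cJ(v,w_\kappa)-2\kappa\int_H w_\kappa(x)\,\kappa_H(x)\,dx.
\end{equation}
Because $w_\kappa\ge0$ has compact support and $\cD^J(U_R)\subset\cD^J(U)$, it is an admissible test function in \eqref{3-eq-sol2}, so $\cJ(v,w_\kappa)\ge\int_U c(x)v(x)w_\kappa(x)\,dx$. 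The decisive pointwise sign is $v\,(v+\kappa)^-\le 0$: wherever $(v+\kappa)^->0$ one has $v<-\kappa<0$, so the product is negative. Hence $v\,w_\kappa\le 0$ on $U_R$ and, since $c\le 0$, also $c\,v\,w_\kappa\ge 0$, giving $\cJ(v,w_\kappa)\ge0$. As $\kappa_H\ge0$ and $\kappa>0$, \eqref{plan-key2} forces $\cJ(w_\kappa,w_\kappa)\le 0$; since $\cJ(w_\kappa,w_\kappa)\ge 0$ always, $\cJ(w_\kappa,w_\kappa)=0$.

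Since $U_R$ is bounded, the Poincar\'e--Friedrichs inequality \eqref{l2-bound} then yields $0=\cJ(w_\kappa,w_\kappa)\ge\Lambda_1(U_R)\|w_\kappa\|_{L^2(U_R)}^2$ with $\Lambda_1(U_R)>0$, whence $w_\kappa=0$ a.e.; that is $(v+\kappa)^-=0$ a.e.\ in $H$, i.e.\ $v\ge-\kappa$ a.e.\ in $H$. Letting $\kappa\downarrow0$ along a sequence gives $v\ge0$ a.e.\ in $H$. If $U$ is bounded one simply takes $\kappa=0$ and $U_R=U$ throughout. The argument is essentially routine once Lemma~\ref{sec:linear-problem-tech} is available; the only point demanding attention is the unbounded case, where it is precisely the decay hypothesis on $v$ that permits truncation to the bounded set $U_R$ while keeping $w_\kappa$ compactly supported, and where the extra term in \eqref{plan-key2} together with the lower-order term $\int_U c\,v\,w_\kappa$ both carry the favourable sign, so the truncation costs nothing. (One should also read the $\liminf$ condition of Definition~\ref{3-defi-anti} for the a.e.-defined $v$; in all our applications $v$ is continuous and the condition is unambiguous.)
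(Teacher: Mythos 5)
Your proof is correct and follows essentially the same route as the paper: truncate to the bounded set $U\cap B_R(0)$ using the decay of $v$, test with $1_H(v+\kappa)^-$ via Lemma~\ref{sec:linear-problem-tech}, use $c\le 0$ together with $v\,(v+\kappa)^-\le 0$ to kill the right-hand side, and conclude via the Poincar\'e--Friedrichs inequality \eqref{l2-bound} before letting $\kappa\downarrow 0$. No gaps.
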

\begin{proof}
Since $\liminf_{\substack{|x|\to\infty\\ x\in H}}v(x)\geq 0$ we have that for every $\epsilon>0$ there is $R>0$ such that $v\geq -\epsilon$ on $H\setminus B_{R}(0)$. Put $\varphi=1_H(v+\epsilon)^-$. Then $\tilde{v}\in \cD^{J}(U\cap B_{R}(0))$ by Lemma \ref{sec:linear-problem-tech} and we have
\begin{align*}
 0\geq \cJ(\varphi,\varphi)\leq -\cJ(v,\varphi)\leq  -\int_{U}c(x)v(x)\varphi(x)\ dx\leq 0,
\end{align*}
since $c\leq 0$ in $U$ and $v(x)\varphi(x)=-\varphi^2(x)-\epsilon\varphi(x)\leq 0$ for $x\in U$. Thus $0=\cJ(\varphi,\varphi)\geq \Lambda_1(B_{R}(0)\cap U)\|\varphi\|^2_{L^2(H)}$ for any $\epsilon>0$. This proves the claim.
\end{proof}

\begin{prop}\label{weak3}
Let $H\in \cH$ and let $U\subset H$ be an open set. Let $c\in L^{\infty}(U)$ and assume there is $B\subset \R^N$ such that $c\leq 0$ in $ U\setminus B$. Then there is $d>0$ independent of $H$ such that the following is true: If $v$ is an antisymmetric supersolution of (\ref{linear-prob}) in $U$  with 
$$
v(x)\geq 0 \qquad \text{for $x\in B\cap U$ with $\dist(x,\R^{N}\setminus H)\geq d$,}
$$
then $v\geq 0$ a.e. in $H$.
\end{prop}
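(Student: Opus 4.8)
The plan is to establish a maximum principle on a thin slab adjacent to $\partial H$, exploiting that the Poincar\'e weight $\kappa_H$ blows up near $\partial H$ at a rate which does not depend on the half space $H$; write $M:=\|c^+\|_{L^\infty(U)}$ throughout. \textbf{Step 1 (choice of $d$).} By the translation and rotation invariance of $z\mapsto k(|z|)$ and since $H$ is a half space, $\kappa_H(x)$ depends on the pair $(x,H)$ only through $\delta:=\dist(x,\partial H)$; denoting this common value by $\psi(\delta)$, one has $\psi(\delta)=\int_{\{z\cdot e>\delta\}}k(|z|)\,dz$ for any fixed unit vector $e$. The function $\psi$ is nonincreasing, finite for every $\delta>0$ (the singularity of $k$ at the origin is avoided, and the tail is controlled by the first integral in (k)), and $\psi(\delta)\to\infty$ as $\delta\to0^+$ because $\int_{\R^N}k(|z|)\,dz=\infty$ by the second integral in (k). Hence I may fix $d>0$, depending on $M$, $k$ and $N$ but \emph{not} on $H$, with $\psi(d)>M$; equivalently $\kappa_H(x)\ge\psi(d)>M$ whenever $x\in H$ and $\dist(x,\partial H)\le d$.

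\textbf{Step 2 (truncation).} Fix $\epsilon>0$. Since $v\ge0$ on $H\setminus U$ and $\liminf_{|x|\to\infty,\,x\in H}v(x)\ge0$, there is $R>0$ with $v\ge-\epsilon$ on $H\setminus U_R$, where $U_R:=U\cap B_R(0)$. Applying Lemma~\ref{sec:linear-problem-tech} with $U_R$ in place of $U$ and $\kappa=\epsilon$, the function $w:=1_H(v+\epsilon)^-$ lies in $\cD^J(U_R)$ and satisfies $\cJ(w,w)\le-\cJ(v,w)-2\epsilon\int_H w\,\kappa_H\,dx$. Since $w\in\cD^J(U_R)\subset\cD^J(U)$ is nonnegative with compact support, testing the supersolution inequality (\ref{3-eq-sol2}) with $w$ gives $\cJ(v,w)\ge\int_U c\,v\,w\,dx$; moreover $v=-w-\epsilon$ a.e.\ on $\{w>0\}$, hence $v\,w=-(w^2+\epsilon w)$ a.e.\ on $H$. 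Combining these,
\[
\cJ(w,w)\ \le\ \int_U c(x)\bigl(w^2(x)+\epsilon w(x)\bigr)\,dx\ -\ 2\epsilon\int_H w(x)\,\kappa_H(x)\,dx .
\]

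\textbf{Step 3 ($\{w>0\}\cap B$ is negligible).} By hypothesis $w(x)=0$ for a.e.\ $x\in B\cap U$ with $\dist(x,\partial H)\ge d$, so $A_1:=\{w>0\}\cap B$ lies, up to a null set, in the slab $\{x\in H:\dist(x,\partial H)<d\}$, on which $\kappa_H\ge\psi(d)$. On $\{w>0\}\setminus B\subset U\setminus B$ one has $c\le0$, so the right-hand side of the last display is $\le\int_{A_1}c\,(w^2+\epsilon w)\,dx-2\epsilon\int_H w\,\kappa_H\,dx\le M\int_{A_1}w^2\,dx+\epsilon(M-2\psi(d))\int_{A_1}w\,dx$, using $\kappa_H\ge\psi(d)$ on $A_1\subset H$. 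Since $\psi(d)>M$ the $\epsilon$-term is $\le0$, so $\cJ(w,w)\le M\int_{A_1}w^2\,dx$. On the other hand, as $w$ vanishes outside $H$, the inequality recorded just before Lemma~\ref{3-mengen-k} (with $\Omega=H$) gives $\cJ(w,w)\ge\int_H w^2\,\kappa_H\,dx\ge\psi(d)\int_{A_1}w^2\,dx$. Hence $(\psi(d)-M)\int_{A_1}w^2\,dx\le0$, which forces $|A_1|=0$.

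\textbf{Step 4 (conclusion).} Now $c\le0$ a.e.\ on $\{w>0\}$, so $\int_U c\,v\,w\,dx=-\int_{\{w>0\}}c\,(w^2+\epsilon w)\,dx\ge0$; feeding this into $\cJ(w,w)\le-\cJ(v,w)-2\epsilon\int_H w\,\kappa_H\,dx$ yields $\cJ(w,w)\le0$, hence $\cJ(w,w)=0$, and since $\Lambda_1(U_R)>0$ by (\ref{l2-bound}) we conclude $w\equiv0$, i.e.\ $v\ge-\epsilon$ a.e.\ on $H$. Letting $\epsilon\downarrow0$ gives $v\ge0$ a.e.\ on $H$. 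I expect Step~1 to be the main obstacle: one has to notice that $\kappa_H$ is a function of $\dist(\cdot,\partial H)$ alone, which is exactly what allows the slab width $d$ to be chosen uniformly over all half spaces --- i.e.\ what makes the statement ``$d$ independent of $H$'' correct. The remainder is bookkeeping, the one delicate point being to arrange the $\epsilon$-dependent contributions (from the truncation level and from the inhomogeneity $\kappa$ in Lemma~\ref{sec:linear-problem-tech}) so that they carry the favourable sign once $\psi(d)$ dominates $M$.
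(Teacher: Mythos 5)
Your proof is correct and follows essentially the same route as the paper: the same uniform choice of $d$ from the divergence of $\int_{\{y_1>\lambda\}}k(|y|)\,dy$ as $\lambda\to 0$, the same test function $1_H(v+\epsilon)^-$, and the same interplay between Lemma \ref{sec:linear-problem-tech}, the supersolution inequality, and the bound $\kappa_H>\|c^+\|_\infty$ on the thin slab to kill $\{w>0\}\cap B$. The only cosmetic difference is that the paper, after deducing $v\ge 0$ on $B\cap U$, hands the remaining region $U\setminus B$ (where $c\le 0$) to Lemma \ref{weak2}, whereas you finish in a single pass with the same test function.
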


\begin{proof}
Denote $c_{\infty}:=\|c\|_{L^{\infty}(U)}$. By translation and rotation using (k) we have for $\lambda>0$
\begin{equation}
 \inf_{\substack{x\in H \\\dist(x,\R^N\setminus H)<\lambda}} \;\;\;\kappa_H(x)= \inf_{\substack{x\in H \\\dist(x,\R^N\setminus H)<\lambda}} \;\;\; \int_{\R^N\setminus H}k(|x-y|)\ dy\geq \int_{\{y_1>\lambda\}}k(|y|)\ dy
\end{equation}
Since (k) implies $\int_{\{y_1>0\}}k(|y|)\ dy=\infty$, there is $d>0$ depending on $N,k$ and $c_{\infty}$ such that 
\begin{equation}
\inf_{\substack{x\in H \\ \dist(x,\R^N\setminus H)<d}} \;\;\;\int_{\R^N\setminus H}k(|x-y|)\ dy> c_\infty. \label{choiceofdelta}
\end{equation}
Next let $\delta>0$ and denote  $\varphi_{\delta}(x)=(v+\delta)^{-}(x)1_{H}(x)$ for $x\in \R^N$. Thus by Lemma \ref{sec:linear-problem-tech} we have $\varphi_{\delta}\in \cD^J(U)$ with compact support in $\overline{H}$ since $\liminf\limits_{\substack{|x|\to \infty\\x\in H}}v(x)=0$. Testing (\ref{linear-prob}) with $\phi_{\delta}$ we get
\begin{equation}
\cJ(v,\phi_{\delta})\geq\int_{H}c(x)v(x)\phi_{\delta}(x)\ dx=-\int_{H}c(x)\phi_{\delta}^2(x)\ dx-\delta\int_{H}c(x)\phi_{\delta}(x)\ dx.\label{komisch1}
\end{equation}

Next denote $A:=\{x\in U\cap B\;:\; \dist(x,\R^N\setminus H) <d\}$ and note that
\begin{align*}
\{x\in A\;:\;&\text{$\phi_{\delta}(x)>0$ and $c(x)>0$}\}=\{x\in H\;:\;\text{$\phi_{\delta}(x)>0$ and $c(x)>0$}\}.
\end{align*}
Since $c(x)\leq 0$ for $x\in H\setminus (A\cup B)$ we have in (\ref{komisch1})
\begin{align}
\cJ(v,\phi_{\delta})& \geq - c_\infty \int_{A}\phi_{\delta}(x)(\phi_{\delta}(x) + \delta )\ dx.\label{komisch3}            
\end{align}
Moreover, by Lemma \ref{sec:linear-problem-tech} we have
\begin{align}
\cJ&(v,\phi_{\delta})\leq -\cJ(\phi_{\delta},\phi_{\delta}) -2\delta \int_{H}\int_{H}\phi_{\delta}(x)k(|x-\bar{y}|)\ dydx\notag\\
&\leq -\int_{H}\phi_{\delta}^2(x)\int_{\R^{N}\setminus H} k(|x-y|)\ dy\ dx -2\delta \int_{H} \phi_{\delta}(x)\int_{\R^{N}\setminus H} k(|x-y|)\ dy \ dx\notag\\
&\leq -\int_{A}\phi_{\delta}(x)\left(\phi_{\delta}(x)+2\delta\right)\int_{\R^{N}\setminus H} k(|x-y|)\ dy\ dx\le -c_{\infty}\int_{A}\phi_{\delta}(x)\left(\phi_{\delta}(x)+2\delta\right)\ dx.\label{komisch4}
\end{align}
Here we used that $\supp(\phi_{\delta})\subset H$. Combining (\ref{komisch3}) and (\ref{komisch4}) we have a contradiction unless $\varphi_{\delta}\equiv 0$ a.e. in $A$. Since $\delta>0$ was chosen arbitrary we have $v\geq 0$ in $B\cap U$. Since $v$ is thus an antisymmetric supersolution of (\ref{linear-prob}) in $U\setminus B$ and $c\leq 0$ in $U\setminus B$, Lemma \ref{weak2} gives $v\geq 0$ in $H$ as claimed.
\end{proof}

\begin{prop}[see \cite{JW14}, Proposition 3.6]\label{strong1}
Let $H\in \cH$ and let $U\subset H$ be an open domain. Let $c\in L^{\infty}(U)$ and let $v$ be an antisymmetric supersolution of (\ref{linear-prob}) in $U$ satisfying $v\geq 0$ a.e. in $H$. Then either $v\equiv 0$ in a neighborhood of $\overline{U}$ or 
\[
\underset{K}\essinf\ v>0\qquad \text{ for every compact $K\subset U$.}
\]
\end{prop}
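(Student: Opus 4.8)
The plan is to argue by contradiction along the lines of a strong maximum principle proof adapted to the nonlocal setting. Suppose $v \ge 0$ a.e. in $H$ but $v$ is not identically zero in any neighborhood of $\overline U$, and suppose there is a compact set $K \subset U$ with $\essinf_K v = 0$. Since $v \ge 0$ a.e. in $H$, the measurable set $Z := \{x \in H : v(x) = 0\}$ has positive measure in every neighborhood of some point $x_0 \in \overline U$ (otherwise $v > 0$ a.e. near $\overline U$, forcing the $\essinf$ dichotomy the other way). The key is to exploit the defining inequality \eqref{3-eq-sol2} localized near such a point.

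The main step is the following: test \eqref{3-eq-sol2} with a suitable nonnegative $\varphi \in \cD^J(U)$ supported in a small ball $B_r(x_0) \subset U$, chosen so that $\varphi > 0$ on the set where $v$ vanishes. From $\cJ(v,\varphi) \ge \int_U c\, v\, \varphi\, dx$ and the representation
\[
\cJ(v,\varphi) = -\int_{B_r(x_0)} \varphi(x) \Bigl( \int_{\R^N} (v(y) - v(x))\, k(|x-y|)\, dy \Bigr) dx
\]
(valid by Lemma \ref{sec:linear-problem-tech-1} once $\varphi$ is chosen with compact support strictly inside $U$), one obtains a pointwise-in-$x$ inequality on the set $\{v(x) = 0\} \cap B_r(x_0)$. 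On that set the right-hand side contribution $c(x)v(x)\varphi(x)$ vanishes, so we get
\[
\int_{\R^N} v(y)\, k(|x-y|)\, dy \le 0 \qquad \text{for a.e. } x \in \{v = 0\} \cap B_r(x_0),
\]
after accounting for the antisymmetry of $v$ (which makes the integral over $\R^N \setminus H$ controllable by (J1)–(J2), exactly as in the proof of Lemma \ref{sec:linear-problem-tech}): writing the integral as one over $H$ only, $\int_H v(y)\bigl(k(|x-y|) - k(|x-\bar y|)\bigr)\, dy \le 0$. Since $v \ge 0$ on $H$ and the kernel difference $k(|x-y|) - k(|x-\bar y|)$ is strictly positive for $y$ near $x$ by (J3) (using $r < \min\{r_0, \dist(x_0, \partial H)\}$, so points of $B_r(x_0)$ stay away from $\partial H$), this forces $v = 0$ a.e. on $B_r(x)$ for such $x$. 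A connectedness/covering argument then propagates $v \equiv 0$ throughout $U$ (here we use that $U$ is a domain), and since the kernel has global support ($k > 0$ on a set of positive measure, indeed $\int_0^\infty k(r) r^{N-1} dr = \infty$) one more application of the same identity with a test function straddling $U$ and a neighborhood forces $v \equiv 0$ in a neighborhood of $\overline U$ — contradicting the standing assumption.

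The hard part will be making the pointwise inequality on $\{v = 0\}$ rigorous: \eqref{3-eq-sol2} is only an integrated (weak) inequality, so one must pass from "$\cJ(v,\varphi) \ge \int c v \varphi$ for all nonnegative test $\varphi$ supported near $x_0$" to a pointwise statement a.e. on the zero set. This is done by choosing $\varphi = \varphi_\eps$ an approximation of a Dirac-type weight concentrated on $\{v=0\}\cap B_r(x_0)$ (or by a Lebesgue-point argument), which requires justifying that $x \mapsto \int_{\R^N} v(y) k(|x-y|)\, dy$ defines an $L^1_{loc}$ function on $B_r(x_0)$ — this follows because $v \in \cV^J(U')$ and $v$ is bounded near infinity together with the integrability $\int_0^\infty \min\{1,r^2\} k(r) r^{N-1} dr < \infty$, after splitting the $y$-integral into a near part (controlled by $\rho(v,U')$ and Cauchy–Schwarz) and a far part (controlled by $\|v\|_{L^2 + L^\infty}$). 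A second delicate point is the propagation step: one needs the zero set to be "open up to null sets" in the sense that $v = 0$ a.e. on a ball around a.e. zero point, and then a chain of overlapping balls inside the connected set $U$ yields $v \equiv 0$ on $U$; the global positivity of $\kappa$-type integrals then extends this to a neighborhood of $\overline U$.
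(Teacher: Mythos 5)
Your argument breaks down at its very first step, and unfortunately that step is the one carrying the entire difficulty of a strong maximum principle. From $\essinf_K v=0$ you infer that the zero set $Z=\{x\in H:\ v(x)=0\}$ must have positive measure near some point of $\overline U$, ``otherwise $v>0$ a.e.\ near $\overline U$, forcing the $\essinf$ dichotomy the other way.'' That inference is false: $v>0$ a.e.\ on a neighborhood of $K$ does \emph{not} imply $\essinf_K v>0$ (take $v(x)=|x-x_0|$ with $x_0\in K$). The case your proof must exclude is precisely the intermediate one in which $v>0$ a.e.\ but the essential infimum on some compact subset is still $0$; everything you do afterwards analyzes points where $v$ actually vanishes, so this case is never touched. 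The way the cited proof (and any proof in this framework) closes this gap is by establishing a \emph{quantitative} lower bound, of the type $\essinf_{B_r(x_1)}v\ \geq\ c_r\int_{B_{2r}(x_1)}v(y)\,dy$ for balls with $\overline{B_{2r}(x_1)}\subset U$ and $2r<\min\{r_0,\dist(x_1,\partial H)\}$, obtained by a test-function/barrier argument in the spirit of Lemma \ref{sec:linear-problem-tech} and Proposition \ref{weak1}. Such an estimate yields both alternatives at once: if the infimum vanishes on one small ball then $v\equiv 0$ a.e.\ on a larger one, and otherwise the infimum is positive; after that, your propagation by overlapping balls in the domain $U$ and the extension to a neighborhood of $\overline U$ via (J3) and antisymmetry do go through.

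A secondary problem is the passage from the weak inequality \eqref{3-eq-sol2} to the pointwise statement $\int_H v(y)\bigl(k(|x-y|)-k(|x-\bar y|)\bigr)\,dy\le 0$ for a.e.\ $x$ in the zero set. A ``Dirac-type weight concentrated on $Z\cap B_r(x_0)$'' is essentially $1_{Z\cap B_r(x_0)}$, which need not belong to $\cD^{J}(U)$ for singular kernels and rough sets $Z$ (its $\cJ$-energy is the relative perimeter-type integral $\int_{Z\cap B}\int_{\R^N\setminus(Z\cap B)}k(|x-y|)\,dy\,dx$, which can be infinite); and a Lebesgue-point argument presupposes that $Iv-cv$ is represented by an $L^1_{loc}$ density rather than merely being a nonnegative distribution, which is not automatic from \eqref{3-eq-sol2}. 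The variational route through the key inequality \eqref{eq:key-ineq}, which never evaluates $Iv$ pointwise, is the safer path in this setting.
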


\begin{prop}[see \cite{JW14}, Proposition 5.1]\label{strong2}
Assume additionally that $k:(0,\infty)\to(0,\infty)$ is strictly decreasing. Let $H\in \cH$ and $U\subset H$ be an open set. Let $c\in L^{\infty}(U)$ and let $v$ be an antisymmetric supersolution of (\ref{linear-prob}) in $U$ satisfying $v\geq 0$ a.e. in $H$. Then either $v\equiv 0$ in $\R^N$ or 
\[
\underset{K}\essinf\ v>0\qquad \text{ for every compact $K\subset U$.}
\]
\end{prop}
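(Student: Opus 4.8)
The goal is to upgrade the strong maximum principle of Proposition~\ref{strong1} to the statement that, under the strict monotonicity of $k$, the dichotomy "$v$ vanishes in a neighborhood of $\overline U$" is in fact "$v$ vanishes identically on all of $\R^N$" --- that is, an antisymmetric supersolution cannot vanish near $\overline U$ without vanishing everywhere. The plan is as follows.

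\begin{proof}[Proof sketch of Proposition~\ref{strong2}]
First I would apply Proposition~\ref{strong1}. It gives the dichotomy: either $\essinf_K v>0$ for every compact $K\subset U$ --- in which case we are done --- or $v\equiv 0$ in a neighborhood $V$ of $\overline U$. So assume the latter and suppose, for contradiction, that $v\not\equiv 0$ in $\R^N$. Since $v$ is antisymmetric and vanishes on $V\supset \overline U \supset$ (part of $\partial H$), and $v\equiv 0$ on $H\setminus U$, there must exist a point $x_0\in H\setminus V$ with $v(x_0)\ne 0$; by antisymmetry we may as well assume $v(x_0)>0$ (up to considering $-v$, which is again an antisymmetric supersolution once we note the sign of $c(x)v(x)$ is preserved --- actually one should be slightly careful here and argue directly, see below). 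The key point is that $v\equiv 0$ on $U$ but $v$ is a supersolution \emph{on} $U$, so testing $\cJ(v,\phi)\ge \int_U c v\,\phi = 0$ for nonnegative $\phi\in\cD^J(U)$ with compact support, we get $\cJ(v,\phi)\ge 0$ for all such $\phi$, while at the same time, since $v\equiv 0$ on $U$,
\[
\cJ(v,\phi) = -\int_{\R^N}\int_{\R^N} v(y)\phi(x) k(|x-y|)\,dy\,dx
= -\int_H \phi(x)\Bigl[\int_{\R^N} v(y) k(|x-y|)\,dy\Bigr]dx.
\]
Using antisymmetry of $v$ and writing $\bar y = Q_H(y)$, the inner integral becomes $\int_H v(y)\bigl(k(|x-y|)-k(|x-\bar y|)\bigr)dy$ for $x\in H$ (note $\phi$ is supported in $U\subset H$). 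Hence we obtain
\[
\int_H \phi(x)\int_H v(y)\bigl(k(|x-y|)-k(|x-\bar y|)\bigr)\,dy\,dx \le 0
\]
for all nonnegative $\phi\in\cD^J(U)$ compactly supported; since $\phi$ is arbitrary, for a.e.\ $x\in U$ we have $\int_H v(y)\bigl(k(|x-y|)-k(|x-\bar y|)\bigr)\,dy\le 0$.

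Now I split $v = v^+ - v^-$ on $H$. For $x,y\in H$ we have $k(|x-y|)-k(|x-\bar y|)\ge 0$ by (J2), and it is \emph{strictly} positive for every pair $x,y\in H$ whenever $k$ is strictly decreasing on all of $(0,\infty)$: indeed $|x-y| < |x - \bar y|$ always holds for $x,y$ in the same open half space (the reflected point is strictly farther). This is precisely where the extra hypothesis on $k$ enters and where (J3) is no longer enough --- we need the strict inequality without the restriction $r<r_0$. Therefore $\int_H v^+(y)\bigl(k(|x-y|)-k(|x-\bar y|)\bigr)dy \le \int_H v^-(y)\bigl(k(|x-y|)-k(|x-\bar y|)\bigr)dy$ for a.e.\ $x\in U$. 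But on $U$ we have $v\equiv 0$, hence $v^\pm\equiv 0$ on $U$; and $v^-\equiv 0$ on $H\setminus U$ too (since $v\ge0$ there). So $v^-$ is supported on... wait: $v\ge 0$ on $H\setminus U$ gives $v^-$ supported in $U$, where $v^-=0$. Thus $v^-\equiv 0$ on all of $H$, i.e.\ $v\ge 0$ on $H$. Then the displayed inequality forces $\int_H v^+(y)\bigl(k(|x-y|)-k(|x-\bar y|)\bigr)dy\le 0$ for a.e.\ $x\in U$, and since the kernel is strictly positive and $v^+\ge0$, we conclude $v^+\equiv 0$ a.e.\ on $H$, hence $v\equiv 0$ on $H$, and by antisymmetry $v\equiv 0$ on $\R^N$ --- contradicting $v\not\equiv 0$. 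This completes the argument.

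The main obstacle, and the only place the new hypothesis is genuinely used, is establishing that $k(|x-y|)-k(|x-\bar y|)>0$ for \emph{all} $x,y\in H$ (not just locally), which requires $k$ strictly decreasing on the whole of $(0,\infty)$; one must also check the integrability needed to justify the manipulation $\cJ(v,\phi)=-\int\int v(y)\phi(x)k(|x-y|)$ and the Fubini steps, which follows from Lemma~\ref{sec:linear-problem-tech-1} since $v\in\cV^J(U')$ and $\phi$ is compactly supported in $U$ with positive distance to $\R^N\setminus U'$. A secondary subtlety is the passage "$v^-\equiv 0$ on $H$": one should verify carefully that $v\ge 0$ on $H\setminus U$ combined with $v\equiv 0$ on $U$ indeed yields $v\ge 0$ on all of $H$ before deducing $v\equiv 0$; this is immediate but worth stating. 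I expect the rest to be routine bookkeeping with the bilinear form identities already developed in Section~\ref{mp}.
\end{proof}
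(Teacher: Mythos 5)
The paper itself does not prove Proposition~\ref{strong2}; it is quoted from \cite{JW14}, Proposition~5.1. Your argument is correct in its essentials and is the natural (and, as far as the cited source goes, the intended) one: if $v$ vanishes on an open piece of $U$, the supersolution inequality $\cJ(v,\phi)\ge 0$ combined with the identity $\cJ(v,\phi)=-\int_H\phi(x)\int_H v(y)\bigl(k(|x-y|)-k(|x-\bar y|)\bigr)\,dy\,dx$, the hypothesis $v\ge 0$ a.e.\ in $H$, and the strict inequality $k(|x-y|)>k(|x-\bar y|)$ for \emph{all} $x,y\in H$ (which is exactly where global strict monotonicity of $k$ replaces (J3)) forces $v\equiv 0$ a.e.\ on $H$ and hence, by antisymmetry, on $\R^N$; the Fubini manipulations are legitimate because $v$ vanishes on a full neighborhood of $\supp\phi$, so the inner integral avoids the singularity of $k$ and Lemma~\ref{sec:linear-problem-tech-1} covers the rest. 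One point to tighten: Proposition~\ref{strong1} is stated for $U$ an open \emph{domain} (connected), whereas here $U$ is an arbitrary open set, so you should invoke it on each connected component $U_i$ separately. This costs nothing: if $v\equiv 0$ near $\overline{U_i}$ for even one component, your kernel computation with $\phi$ supported in that component already yields $v\equiv 0$ on all of $\R^N$; otherwise $\essinf v>0$ on compact subsets of every component, and since a compact $K\subset U$ meets only finitely many components, each intersection $K\cap U_i$ being compact, one gets $\essinf_K v>0$. Finally, your detour through $v^{\pm}$ and the sign of $v(x_0)$ is superfluous, since $v\ge 0$ a.e.\ in $H$ is already among the hypotheses.
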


\begin{bem}\label{strongmp}
Note that Proposition \ref{strong1} and \ref{strong2} also hold for supersolutions, that is $v\in \cD^J(U)$ satisfies
\[
 \cJ(v,\varphi)\geq \int_{U} c(x)v(x)\varphi(x)\ dx\quad\text{ for all $\varphi\in \cD^J(U)$ with compact support in $\R^N$}
\]
and $v\geq 0$ on $\R^N\setminus U$ (see also \cite{J15}).
\end{bem}


\section{Foliated Schwarz symmetry for nonlocal boundary value problems}\label{foliatedsymmgoal}

In addition to the definitions at the beginnings of Section \ref{setup} and \ref{mp} we need the following. Given $H\in \cH$ we define the polarization of $u:\R^N\to \R$ w.r.t. $H$ as
\begin{equation}
 u_H(x):=\left\{\begin{aligned}
&\max\{u(x),u(Q_H(x))\}&&\text{ if $x\in H$;}\\
&\min\{u(x),u(Q_H(x))\}&&\text{ if $x\in \R^{N}\setminus H$.}
\end{aligned}\right.
\end{equation} 
We say that $H$ is dominant for $u$, if $u= u_H$ on $H$.

In this part we will work with the rotating plane method. The following characterization of foliated Schwarz symmetry for continuous functions will be helpful.

\begin{prop}[Proposition 3.3, \cite{SW13}]\label{foliatedchar}
Let $\Omega\subset \R^N$ be an open radial set and let $U$ be a set of functions $u:\R^N\to \R$, which are continuous in $\Omega$. Moreover, fix
\[
M:=\{e\in S^{N-1}\;:\; u=u_{H_p}\ \text{ for all $x\in \Omega \cap H_p$ and $u\in U$}\}
\]
and assume that there is $e_0\in M$ such that the following is true:\\
If for all two dimensional subspaces $P\subset \R^N$ containing $e_0$ there are two different points $p_1$, $p_2$ in the same connected component of $M\cap P$ such that $u=u\circ Q_{H_{p_1}}$ and $u=u\circ Q_{H_{p_2}}$ for every $u\in U$, then there is $p\in S^{N-1}$ such that for every $u\in U$ and every connected component $D\subset \Omega$ the functions $\left.u\right|_D$ are foliated Schwarz symmetric with respect to $p$.
\end{prop}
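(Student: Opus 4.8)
The plan is to reduce the statement to the \emph{polarization characterization} of foliated Schwarz symmetry and then run a rotating-plane argument inside the $2$-planes through $e_0$. Recall the characterization announced above (cf.\ \cite{SW13} and \cite[Section~2.3]{T}): a function $w$ continuous on a radial set $D$ is foliated Schwarz symmetric with respect to $p$ if and only if $w=w_H$ on $D\cap H$ for every $H\in\cH_0(p)$; since the set $\{e\in S^{N-1}: H_e\text{ is dominant for }w\}$ is closed in $S^{N-1}$ by continuity of $w$, this is equivalent to $\{q\in S^{N-1}: q\cdot p\ge 0\}\subseteq\{e\in S^{N-1}: H_e\text{ is dominant for }w\}$. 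Because $\Omega$ is radial, every $Q_H$ with $H\in\cH_0$ maps each connected component of $\Omega$ onto itself; hence it suffices to produce a single $p\in S^{N-1}$ with $\{q: q\cdot p\ge 0\}\subseteq M$ (with $M$ as in the statement, i.e.\ the common set of dominant directions of all $u\in U$, which is closed by the same argument), and the conclusion for each component of $\Omega$ then follows.

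The core is a planar lemma. Fix a $2$-plane $P\ni e_0$, put $C_P:=S^{N-1}\cap P$, and let $\gamma\subseteq M\cap C_P$ be the arc joining the given points $p_1\ne p_2$ inside their common connected component, so that $H_q$ is dominant for every $u\in U$ and $q\in\gamma$, while every $u\in U$ is symmetric with respect to $H_{p_1}$ and $H_{p_2}$. Restricting to a circle $rS^{N-1}\cap P$ (on which each $Q_{H_q}$, $q\in P$, acts as a planar reflection), the composition $Q_{H_{p_1}}\circ Q_{H_{p_2}}$ is a rotation of that circle leaving every $u\in U$ invariant. Confronting the order of this rotation with the dominance along $\gamma$ forces, unless every $u\in U$ is constant on all circles contained in $P$, that $p_2=-p_1$ and that $\gamma$ is a closed semicircle (indeed, whenever $p_1\ne -p_2$, or $\gamma$ exceeds a semicircle, one produces for each $u$ either a dense rotation orbit or a nontrivial rotational period $\le\pi$ on the circles of $P$; the first yields constancy by continuity, and so does the second, the dominance inequalities along $\gamma$ making each $u$ monotone in an angle over an interval longer than the period). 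Writing $m_P$ for the midpoint of the semicircle $\gamma$, a direct manipulation of the dominance inequalities along $\gamma$ together with the two endpoint symmetries then shows that each $u\in U$, restricted to any circle in $P$, is symmetric and nonincreasing in the polar angle measured from $m_P$, and that $M\cap C_P=\{q\in C_P: q\cdot m_P\ge 0\}$.

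It remains to glue the planar conclusions. After discarding the trivial case in which every $u\in U$ is radial (any $p$ works), fix $u_0\in U$ and a sphere $rS^{N-1}\subseteq\Omega$ on which $u_0$ is nonconstant, and let $p^*$ be a maximum point of $u_0$ on it. Applying the planar lemma to $P^*:=\mathrm{span}(e_0,p^*)$ and using that the maximum of $u_0$ over $rS^{N-1}$ is already attained on the great circle $rS^{N-1}\cap P^*$, one gets $m_{P^*}=p^*$ (the degenerate cases $p^*\in\{\pm e_0\}$ lead directly to $p=e_0$). The assignment $P\mapsto m_P$ is continuous on the connected space of $2$-planes through $e_0$ — this uses only closedness of $M$ and the semicircle description — and a standard continuity/connectedness argument (with the usual care for plateaus of $u_0$ and for the ``degenerate'' planes on which all of $U$ is circle-constant) propagates $m_{P^*}=p^*$ to: for every $P\ni e_0$, $m_P$ is the unit vector in $P$ nearest $p^*$. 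Consequently $\{q\in S^{N-1}: q\cdot p^*\ge 0\}=\bigcup_{P\ni e_0}\{q\in C_P: q\cdot m_P\ge 0\}\subseteq M$, and by the first paragraph every $u\in U$ is foliated Schwarz symmetric with respect to $p:=p^*$ on every connected component of $\Omega$.

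The step I expect to be the main obstacle is the planar lemma of the second paragraph: cleanly eliminating the ``over-symmetric'' possibilities through the dominance inequalities, and carrying out the final monotonicity computation, is the heart of the rotating-plane method (in the spirit of \cite{P01,PW07,SW13}). A secondary difficulty lies in the gluing of the third paragraph — continuity of $P\mapsto m_P$ is routine, but excluding jumps of the maximal direction as $P$ rotates, and handling plateaus and the planes on which $U$ is circle-constant, requires some bookkeeping.
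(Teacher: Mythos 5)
The paper does not prove this statement: it is quoted verbatim (with attribution) as Proposition 3.3 of \cite{SW13}, so there is no in-paper proof to compare against. Your architecture --- reduce to the polarization/dominance characterization of foliated Schwarz symmetry, prove a planar rotating-plane lemma in each $2$-plane through $e_0$, then glue --- is indeed the standard route, and your first two paragraphs are essentially sound. In particular the planar dichotomy is correct: if $p_1\neq\pm p_2$, the composition $Q_{H_{p_1}}\circ Q_{H_{p_2}}$ gives invariance under a nontrivial rotation; an irrational rotation angle yields constancy on the orbit circles by density, while a rational one gives a period $T\le\pi$, and combining the dominance along $\gamma$ with the symmetry at $p_1$ yields $u(s)\ge u(s+t)$ for $s$ in an interval of length $\pi\supseteq$ one full period, hence for all $s$, hence constancy; the over-long-arc case reduces to this. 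So in a nondegenerate plane $M\cap C_P$ is exactly a closed semicircle containing $e_0$, its endpoints $\pm b_P$ are genuine symmetry directions, and $u$ is even and angularly nonincreasing about $m_P$ on each orbit circle.

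The genuine gap is in the gluing. First, the identification $m_{P^*}=p^*$ is false in general: if $u_0$ has a plateau of maxima on the sphere $rS^{N-1}$, the maximizers on the great circle $rC_{P^*}$ form a closed arc centered at $rm_{P^*}$, and an arbitrary maximum point $p^*$ need not be its midpoint. This is not a removable bookkeeping issue, because your final axis is $p:=p^*$, and a function that is foliated Schwarz symmetric about $m$ with a plateau of maxima around $m$ is in general \emph{not} foliated Schwarz symmetric about other points $p^*$ of the plateau (easy one-dimensional circle examples show the dominance inequalities fail for $p^*\neq m$). Second, the propagation ``$m_P$ is the unit vector in $P$ nearest $p^*$'' is exactly the consistency statement that has to be proved; continuity of $P\mapsto m_P$ at nondegenerate planes is fine, but degenerate planes (on which $C_P\subseteq M$ and $m_P$ is undetermined) can disconnect the nondegenerate locus, and nothing in your argument prevents the $m_P$ from ``jumping'' across them, so the union $\bigcup_P\{q\in C_P: q\cdot m_P\ge0\}$ need not contain any closed half-sphere. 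A workable repair replaces the maximum-point device by an analysis of the set $B:=M\cap(-M)$ of common symmetry directions: the planar lemma shows $B\cap C_P=\{\pm b_P\}$ for nondegenerate $P$, and two non-orthogonal, non-antipodal elements of $B$ would generate, together with the semicircle of dominant directions, enough rotational invariance to force degeneracy; this pins $B$ down to a great subsphere $p^\perp\cap S^{N-1}$ and then $\{q:q\cdot p\ge0\}\subseteq M$ follows from the semicircle description together with $e_0\cdot m_P\ge0$. As written, your third paragraph does not close.
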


\begin{proof}[Proof of Theorem \ref{schwarzsymm}]
To fix our notation for the proof of Theorem \ref{schwarzsymm} we denote for $e\in S^{N-1}$, $H_e\in\cH_0$ as in \ref{def:hp}. Moreover, we put
\[
\sigma_e:=Q_{H_e}\qquad \text{ and }\qquad \Omega_e:=\Omega \cap H_e.
\]
Note that then the function $v_e:=u\circ \sigma_e-u$ is a continuous antisymmetric supersolution of (see Remark \ref{3-anti})
\begin{equation}\label{antisym1}
Iv=c_e(x)v\quad \text{ in $\Omega_e$;}  \qquad v \equiv 0\quad\text{on $H_e\setminus \Omega_e$,}
\end{equation}
where for $x\in \Omega_e$ we put
\[
c_e(x):=\left\{\begin{aligned} \frac{f(|x|,u\circ \sigma_e(x))-f(|x|,u(x))}{u\circ \sigma_e(x)-u(x)}&&\text{ if $u\circ \sigma_e(x)\neq u(x)$;}\\
0&&\text{ if $u\circ \sigma_e(x)= u(x)$.}\end{aligned}\right.
\]
Moreover, since we assume (F1) and $u$ is bounded there is $c_{\infty}>0$ such that
\[
\sup_{e\in S^{N-1}}\|c_e\|_{L^{\infty}(\Omega_e)}\leq c_\infty.
\]

 We will use Proposition \ref{foliatedchar} to prove the statement. For this put
\begin{equation}\label{def:m}
M:=\{e\in S^{N-1}\;:\; v_e\geq 0 \;\text{ in $\Omega_e$}\}.
\end{equation}
Let $e_1\in S^{N-1}$ be given by (U1). Note that by (U1) we have that $H_{e_1}$ is dominant for $u$ and thus $e_1\in M$. Since moreover (U1) implies $v_{e_1}\not\equiv 0$ in $\Omega_{e_1}$, Proposition \ref{strong1} -- in case $\Omega$ is not connected -- or Proposition \ref{strong2} -- assuming $k$ is additionally strictly decreasing -- give $v_{e_1}>0$ in $\Omega_{e_1}$. We then have to show 
\begin{enumerate}
\item[(S)] For all two dimensional subspaces $P\subset \R^N$ containing $e_1$ there are two different points $p_1$, $p_2$ in the same connected component of $M\cap P$ such that $u\equiv u\circ Q_{H_{p_1}}$ and $u\equiv u\circ Q_{H_{p_2}}$
\end{enumerate}

To show (S), let $A:(-\pi,\pi]\times \R^N\to \R^{N}$, $(\varphi,x)\mapsto A(\varphi)x$ be a rotation of $\R^N$ of angle $\varphi$ and consider the set $S:=\{A(\varphi)e_1 \;:\;\varphi\in[-\pi,\pi)\}$ and write $e^{\varphi}=A(\varphi)e_1$ for the elements in $S$.

By continuity of $v_{e_1}$ we may choose $\epsilon>0$ small, such that there is $K\subset \Omega_{e^\varphi}$ with $\dist(K,\R^{N}\setminus H_{e^{\varphi}})>0$ and $v_{e^\varphi}\geq0$ in $K$ for all ${e^\varphi}\in S^{N-1}$ with $\varphi\in(-\epsilon,\epsilon)$. Moreover, we can choose $K$ such that additionally we have $|\Omega_{e^\varphi}\setminus K|<\delta(\epsilon)$ with $\delta(\epsilon)\to 0$ for $\epsilon\to 0$. Thus we may assume -- making $\epsilon$ even smaller -- that we have $\Lambda_1(\Omega_{e^\varphi}\setminus K)>c_{\infty}$ for all $\varphi\in (-\epsilon,\epsilon)$, which is possible due to (\ref{l2-bound}) since $\inf_{\substack{B\subset \R^{N}\\|B|=m}}\Lambda_1(B)\to \infty$ for $m\to 0$. An application of Proposition \ref{weak1} gives $v_{e^\varphi}\geq 0$ in $\Omega_{e^{\varphi}}$ so that
\[
\{A(\varphi)e_1\;:\; \varphi\in(-\epsilon,\epsilon)\}\subset M.
\]
For the next step we put
\begin{equation}\label{phipm}
\varphi^{+}:=\sup\{\varphi\;:\; e^{\varphi}\in M\}\quad\text{ and }\quad\varphi^{-}:=\inf\{\varphi\;:\; e^{\varphi}\in M\}.
\end{equation}
Note that $\varphi^+\in [\epsilon,\pi-\epsilon]$ and $\varphi^-\in [-\pi+\epsilon,-\epsilon]$. Moreover, by continuity we have $v_{e^{\varphi^\pm}}\geq 0$ in $\Omega_{e^{\varphi^{\pm}}}$ thus by Proposition \ref{strong1} or Proposition \ref{strong2} we have either
\begin{align*}
\text{Case 1:}\;\;&\text{ $v_{e^{\varphi^{+}}}\not\equiv 0$ in $\Omega_{e^{\varphi^+}}$ or $v_{e^{\varphi^{-}}}\not\equiv 0$ in $\Omega_{e^{\varphi^-}}$.}\\
\text{Case 2:}\;\;&\text{ $v_{e^{\varphi^{+}}}\equiv 0$ in $\Omega_{e^{\varphi^+}}$ and $v_{e^{\varphi^{-}}}\equiv 0$ in $\Omega_{e^{\varphi^-}}$.}
\end{align*}
Note that in Case 1 we have by Proposition \ref{strong1} or Proposition \ref{strong2} $v_{e^{\varphi^{+}}}> 0$ in $\Omega_{e^{\varphi^+}}$ or $v_{e^{\varphi^{-}}}> 0$ in $\Omega_{e^{\varphi^-}}$ and with the arguments as in the beginning this leads to a contradiction of the definition of $\varphi^+$ or $\varphi^-$. Thus we must be in Case 2. Since $e^{\varphi^+}\neq e^{\varphi^-}$ are in the same connected component of $M$ by construction and, moreover, since we have choose $A$ arbitrary (S) follows and so the proof is finished using Proposition \ref{foliatedchar}.
\end{proof}

\begin{proof}[Proof of Theorem \ref{schwarzsymm2}]
Let $\Omega$ be as stated, let (F2) be satisfied and additionally, if $\Omega$ is not connected, we assume that $k:(0,\infty)\to(0,\infty)$ is strictly decreasing.  By (F1) and since $u$ is bounded we have for any $e\in S^{N-1}$ that
\[
c_e:\Omega\to \R, \quad c_e(x):=\left\{\begin{aligned} \frac{f(x,u\circ \sigma_e(x))-f(x,u(x))}{u\circ \sigma_e(x)-u(x)}&&\text{ if $u\circ \sigma_e(x)\neq u(x)$;}\\
0&&\text{ if $u\circ \sigma_e(x)= u(x)$}\end{aligned}\right.
\] 
satisfies $c_e\in L^{\infty}(\Omega)$ with
\[
 c_{\infty}=\sup_{e\in S^{N-1}} \|c_e\|_{L^{\infty}(\Omega)}<\infty.
\]
Moreover, by (U2) we have $\lim\limits_{r\to\infty}\sup\limits_{|x|=r} u(x)=0$. Thus there is -- using (F2) --  a radius $\rho>0$ independent of $e$ such that
\[
\sup_{e\in S^{N-1}}c_e(x)\leq 0\qquad\text{ for all $x\in \Omega\setminus B_{\rho}(0)$.}
\]
We may proceed similarly as in the proof of Theorem \ref{schwarzsymm}, i.e. let $e_1$ be given by (U1), consider the statement (S), let $A(\cdot)$ be a rotation of $\R^N$ and fix $e^{\varphi}:=A(\varphi)e_1$. Note that we have as before $v_{e_{1}}>0$ in $\Omega_{e_1}$. Hence using the continuity of $u$ we may fix $K\subset\subset \Omega_{e^{\varphi}}\cap B_{\rho}(0)$ such that $| \Omega_{e^{\varphi}}\cap B_{\rho}(0)\setminus K|$ is small and $v_{e^{\varphi}}\geq 0$ in $K$ for $\varphi\in(-\epsilon,\epsilon)$ with $\epsilon$ small enough. As in the proof of Theorem \ref{schwarzsymm}, using Proposition \ref{weak3} with $U=\Omega_{e^{\varphi}}$ and $B=\Omega_{e^{\varphi}}\cap B_{\rho}(0)$ instead of Proposition \ref{weak1}, we find $\varphi^+$ and $\varphi^-$ as in (\ref{phipm}) such that $\partial H_{e^{\varphi^+}}$ and $\partial H_{e^{\varphi^-}}$ are two symmetry hyperplanes of $u$. We thus conclude with Proposition \ref{foliatedchar} that there is for each connected component $D$ of $\Omega$ the same $e_0\in S^{N-1}$ as claimed for which $\left.u\right|_D$ is foliated Schwarz symmetric.
\end{proof}

\section{Foliated Schwarz symmetry for global minimizers}\label{functional}

Next we will give an application of Theorem \ref{schwarzsymm} and Theorem \ref{schwarzsymm2}. Let $\Omega\subset \R^N$, $N\geq 2$ be any radial open set and let $k:(0,\infty)\to(0,\infty)$ be a strictly decreasing function which satisfies (k) and
\begin{enumerate}
 \item[(k2)] there is $c_1,r_0>0$ and $s\in(0,1)$ such that
\[
 k(r)\geq c_1 r^{-1-2s}\quad\text{ for $r\in(0,r_0]$.}
\]
\end{enumerate}
Then $\cD^J(\Omega)\subset \cH^s_0(\Omega)$ (see e.g. \cite[Lemma 5.14]{J15}) and for $q\in[2,2_s^{\ast}]$, $2_s^{\ast}=\frac{2N}{N-2s}$ it follows that there is $C>0$ such that (see e.g. \cite[Theorem 6.7]{NPV11}) 
\[
\|u\|_{L^{q}(\Omega)}\leq  C\left(\cJ(u,u)+\|u\|^2_{L^2(\R^N)}\right) \quad\text{ for all $u\in \cD^J(\Omega)$.}
\]
 We will consider global minimizers of the functional
\begin{equation}\label{defifunc}
\begin{aligned}
K:\ &\cD^{J}(\Omega)\to \R,&& K[u]=\frac{1}{2}\cJ(u,u)-\int_{\Omega} F(|x|,u(x))\ dx.
\end{aligned}
\end{equation}
Here 
\[
F:[0,\infty)\times \R\to\R,\qquad F(r,u)=\int_0^{u} f(r,\tau)\ d\tau,
\]
where $f:[0,\infty)\times \R\to\R$ satisfies (F1), (F2) and there are $a_1,a_2$ and $q\in[2,2_s^{\ast}]$ such that
\[
|f(r,u)|\leq a_1|u|+a_2|u|^{q-1} \qquad\text{ for all $r\geq 0$, $u\in \R$.} 
\]
Note that thus $F$ satisfies
\[
 |F(r,u)|\leq a_1 |u|^2+a_2|u|^q\qquad \text{ for all $r\geq 0$, $u\in \R$}
\]
and hence the functional $K$ is well-defined on $\cH^s_0(\Omega)$. Moreover, as described in \cite[Chapter 3]{SV12} (see also \cite{SV13}) $K$ is Fr\'echet differentiable and for $u,\varphi\in\cD^J(\Omega)$ we have
\[
 \langle K'[u],\varphi\rangle =\cJ(u,\varphi)-\int_{\Omega}f(|x|,u(x))\varphi(x)\ dx.
\]
 This gives that global minimizers are solutions of problem (P'). The following proves Theorem \ref{result4}.

\begin{satz}\label{result5}
 Let $\Omega\subset\R^N$ be an open radial set and let $F$ be given as above w.r.t. $f$ and $q$ as above. Moreover, assume (k) and (k2) hold and let either
\begin{enumerate}
 \item $u\in \cD^J(\Omega)$ be a continuous bounded global minimizer of $K$ or
\item $u\in \cD^J(\Omega)$ be a continuous bounded minimizer of $K$ subject to $\|u\|^q=1$.
\end{enumerate}
If in addition $u$ satisfies (U2) (see p. \pageref{u2}), then $u$ is foliated Schwarz symmetric.
\end{satz}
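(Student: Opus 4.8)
The plan is to reduce Theorem~\ref{result5} to Theorem~\ref{schwarzsymm} (if $\Omega$ is bounded) or Theorem~\ref{schwarzsymm2} (if $\Omega$ is unbounded), for which we need to verify that the minimizer $u$ satisfies the reflection hypothesis (U1) — or else conclude directly that $u$ is already radial (and in particular foliated Schwarz symmetric). The point that makes minimizers special is that polarization does not increase the energy: for every $H\in\cH_0$ one has $\cJ(u_H,u_H)\le\cJ(u,u)$ and $\int_\Omega F(|x|,u_H)\,dx=\int_\Omega F(|x|,u)\,dx$, the latter because $F(|x|,\cdot)$ is applied pointwise and $|Q_H(x)|=|x|$ for $H\in\cH_0$, so polarization merely permutes the values of $u$ on each sphere $rS^{N-1}$. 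Hence $K[u_H]\le K[u]$ (and in case~(2) also $\|u_H\|_q=\|u\|_q=1$ since the $L^q$ norm is polarization-invariant), so $u_H$ is again a minimizer for every $H\in\cH_0$.

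**Key steps.** First I would record the polarization inequality $\cJ(u_H,u_H)\le\cJ(u,u)$ for $H\in\cH_0$; this is the nonlocal analogue of the Pólya–Szegő inequality and should follow from the two-point rearrangement identity together with the monotonicity property (J2) of $k$ — this is presumably the inequality (\ref{pol}) attributed to Van~Schaftingen in the acknowledgments. Second, combine this with the invariance of $\int_\Omega F(|x|,\cdot)$ and of $\|\cdot\|_q$ under polarization to see that $u_H$ is a minimizer, hence (by Fréchet differentiability, using a Lagrange multiplier in case~(2)) a solution of $(P')$, for every $H\in\cH_0$. Third, the standard dichotomy: either there exists some $H\in\cH_0$ with $u\neq u_H$ somewhere on $H$ — in which case $u$ itself satisfies (U1) with $v_e=u\circ\sigma_e-u$, so $v_e\geq 0$, $v_e\not\equiv 0$ on $\Omega_e$, wait, we need $u\ge u\circ Q_H$, so actually one should polarize to $u_H$ and note $u_H$ satisfies (U1): hmm, let me instead say — either $u=u_H$ on $H$ for every $H\in\cH_0$, which by a classical characterization (every function that equals all of its polarizations is radial and radially nonincreasing about the origin) forces $u$ to be radial and hence foliated Schwarz symmetric trivially; or there is $H_0\in\cH_0$ with $u\neq u_{H_0}$ on $H_0$, and then $w:=u_{H_0}$ is a minimizer, hence a continuous bounded solution of $(P')$ (regularity from (F1) via \cite{KM14b}), which by construction satisfies $w\ge w\circ Q_{H_0}$ with $w\not\equiv w\circ Q_{H_0}$ on $H_0$, i.e. (U1) holds for $w$. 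Fourth, apply Theorem~\ref{schwarzsymm} or \ref{schwarzsymm2} to $w$ to get that $w$ is foliated Schwarz symmetric; finally transfer this back to $u$ — since $u$ and $u_{H_0}$ are both minimizers with the same level-set "mass" on each sphere, one uses the uniqueness of the symmetric rearrangement together with the strong maximum principle (Proposition~\ref{strong2}, using strict monotonicity of $k$) to conclude $u=u_{H_0}$, and then $u$ inherits foliated Schwarz symmetry.

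**Main obstacle.** The hard part is the last step: passing from "some polarization of $u$ is foliated Schwarz symmetric" to "$u$ itself is foliated Schwarz symmetric." In the local case this is handled by the fact that a function which, after one polarization, becomes foliated Schwarz symmetric and retains the same Dirichlet energy must already have been a polarization (up to rotation) of the symmetric profile; the nonlocal version requires the equality case in the polarization inequality (\ref{pol}), which in turn needs the strict monotonicity of $k$ on $(0,r_0)$ via (J3) to rule out nontrivial "overlap" between $u$ and $u\circ Q_{H_0}$ on the two sides of $\partial H_{0}$. I would therefore need an equality-case analysis for the nonlocal Pólya–Szegő inequality, run hyperplane by hyperplane, combined with Proposition~\ref{strong1}/\ref{strong2} applied to the antisymmetric function $u_{H_0}\circ\sigma-u_{H_0}$ to propagate positivity; once one knows that $u$ coincides with its polarization in \emph{every} dominant direction of the symmetric profile, the characterization of foliated Schwarz symmetry (Proposition~\ref{foliatedchar}) closes the argument.
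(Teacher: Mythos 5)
Your first half is essentially the paper's argument: polarization with respect to $H\in\cH_0$ leaves $\int_\Omega F(|x|,\cdot)\,dx$ and $\|\cdot\|_{L^q}$ unchanged and does not increase $\cJ$ by (\ref{pol}), so $u_H$ is again a minimizer and hence a solution of the Euler--Lagrange equation. But the second half has a genuine gap, and you have correctly sensed where it is. The paper never needs to prove symmetry for $u_{H_0}$ and then ``transfer back'' to $u$; it shows directly that $u$ \emph{equals} its own polarization. The missing idea is the choice of hyperplane: if $u$ is not radial, pick $x_0\in\Omega$ with $u(x_0)>u(-x_0)$ and take $H\in\cH_0$ with $Q_H(x_0)=-x_0$. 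Then $v:=u_H-u$ is an antisymmetric function with $v\ge 0$ on $H$, and since both $u$ and $u_H$ solve $(P')$ (resp. $(P'_c)$), $v$ is an antisymmetric supersolution of the linearized problem $Iv=c(x)v$ in $\Omega\cap H$. The key point is that $v(x_0)=u_H(x_0)-u(x_0)=0$ at the \emph{interior} point $x_0\in\Omega\cap H$, because $u(x_0)>u(Q_H(x_0))$ forces $u_H(x_0)=u(x_0)$. The strong maximum principle (Proposition \ref{strong1}) then excludes the alternative $\essinf_K v>0$ and yields $v\equiv 0$, i.e. $u=u_H$ everywhere. Hence $H$ is dominant for $u$ and $u\not\equiv u\circ Q_H$ on $H$, so $u$ itself satisfies (U1), and Theorem \ref{schwarzsymm} or \ref{schwarzsymm2} applies to $u$ directly.

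By contrast, your proposed route --- apply the symmetry theorems to $w=u_{H_0}$ and then recover $u$ via an equality-case analysis of the polarization inequality and ``uniqueness of the symmetric rearrangement'' --- is not carried out and would be considerably harder: equality in (\ref{pol}) alone does not identify $u$ with $u_{H_0}$ without exactly the kind of strong-maximum-principle argument above, and the antisymmetric function you propose to feed into Proposition \ref{strong1}/\ref{strong2}, namely $u_{H_0}\circ\sigma-u_{H_0}$, is the wrong one; the relevant one is $u_{H_0}-u$. Also note that the dichotomy you set up (``$u=u_H$ for all $H\in\cH_0$ implies $u$ radial'') is true but unnecessary in this form: the paper simply splits into ``$u$ radial'' (trivially foliated Schwarz symmetric) versus ``there is $x_0$ with $u(x_0)>u(-x_0)$'', which is exactly the configuration needed to launch the argument above.
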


\begin{bem}
 Note that Theorem \ref{result5} is a special case of Theorem \ref{result4}. 
\end{bem}

For the proof we will need the following inequality concerning the polarization of a function (see \cite[Theorem 2]{B92} or \cite[Proposition 8]{VW04}): If (k) is satisfied, then we have for every $H\in\cH$
\begin{equation}\label{pol}
 \cJ(u_H,u_H)\leq \cJ(u,u) \qquad\text{ for all measurable $u:\R^N\to\R$ with $\cJ(u,u)<\infty$.}
\end{equation}

With this we give now the

\begin{proof}[Proof of Theorem \ref{result5}]
First let $u$ be a continuous bounded global minimizer of the functional $K$ and assume $u$ satisfies (U2). Then $u$ solves
\[
 (P')\qquad\left\{\begin{aligned}
  Iu&=f(|x|,u)&& \text{ in $\Omega$}\\
u&=0,&& \text{ on $\R^N\setminus \Omega$}\\
\lim\limits_{|x|\to\infty}u(x)&=0
 \end{aligned}\right.
\]
If $u$ is radial symmetric, then $u$ is in particular foliated Schwarz symmetric. Hence assume that $u$ is not radial, then there is $x_0\in \Omega$ such that $u(x_0)>u(-x_0)$. Let $H\in \cH_0(x_0)$ such that $Q_H(x_0)=-x_0$ and note that with (\ref{pol}) the polarization $u_H$ of $u$ w.r.t. $H$ is also a minimizer of $K$. Note that we have $v=u_H-u$ is antisymmetric w.r.t. $H$ with $v\geq 0$ on $H$. Since $u$ and $u_H$ are minimizers we have that both functions solve $(P')$, i.e. for all $\varphi\in \cD^J(\Omega\cap H)$, $\varphi\geq0$ we have
\[
\cJ(v,\varphi)=\int_{\Omega\cap H}\left(f(|x|,u_H(x))\varphi(x)-f(|x|,u(x))\right)\varphi(x)\ dx=\int_{\Omega\cap H}c(x)v(x)\varphi(x)\ dx,
\]
where for $x\in \Omega\cap H$ we have
\[
 c(x)=\int_{0}^{1} \partial_uf(|x|,u(x)+s(u_H(x)-u(x)))\ ds.
\]
Note that $c\in L^{\infty}(\Omega\cap H)$ since we assumed that $u$ is bounded. Thus we have that $v$ is an antisymmetric supersolution of $Iv=c(x) v$ in $\Omega\cap H$ with $v\geq 0$ on $H$. By Proposition \ref{strong1} we have that $v\equiv 0$ in $\R^N$ or $v>0$ in $\Omega\cap H$. Since $v$ is continuous and since $v(x_0)=0$ by assumption, we conclude $v\equiv 0$ in $\R^N$. In particular, we have $u_H=u$ in $\R^N$, i.e. $H$ is dominant for $u$. Since $u\not \equiv u\circ Q_H$ on $H$ we have that $u$ satisfies (U1) and thus an application of Theorem \ref{schwarzsymm} -- if $\Omega$ is bounded -- or Theorem \ref{schwarzsymm2} -- if $\Omega$ is unbounded -- finishes the proof of 1.\\
To see 2. note that then $u$ solves 
\[
 (P'_c)\qquad\left\{\begin{aligned}
  Iu&=f(|x|,u)+ |u|^{q-2}u&& \text{ in $\Omega$}\\
u&=0,&& \text{ on $\R^N\setminus \Omega$}\\
\lim\limits_{|x|\to\infty}u(x)&=0
 \end{aligned}\right..
\]
Moreover, using (\ref{pol}) and the fact that for any $H\in \cH_0$ we have
\[
 \|u_H\|_{L^{q}(\Omega)}=\|u\|_{L^{q}(\Omega)},
\]
it follows that the polarization $u_H$ of $u$ is also a minimizer of the functional $K$ under the same constraint. Hence $u_H$ also solves $(P'_c)$. Now 2. follows with the same arguments as for 1.
\end{proof}

\section{Proof of Theorem \ref{result1}}\label{mr}

Since $k$ satisfies (k) we have that (J1) -- (J3) in Remark \ref{symmetry-need} hold for any $H\in \cH$. Moreover, we assume that $f:\R\to\R$ satisfies (F1) and (F2). Let $u\in \cD^J(\R^N)$ be a nonnegative bounded continuous solution of $(R)$ which satisfies 
\begin{equation}\label{far}
\lim_{|x|\to\infty}u(x)=0.
\end{equation}

To prove Theorem \ref{result1} we will fix some $e\in S^{N-1}$ and apply the moving plane method with respect to reflections at $H_{\lambda}:=\{x\cdot e>\lambda\}\in \cH$, $\lambda\in \R$. Denote for $\lambda\in \R$: $T_{\lambda}:=\partial H_{\lambda}$, $Q_{\lambda}:=Q_{H_{\lambda}}$ and for any function $z:\R^{N}\to \R$ define by $z_{\lambda}(x):=z(Q_{\lambda}(x))$ the reflected function. Furthermore we will denote $V_\lambda z=z_{\lambda}-z$, the difference between $z$ reflected and the original $z$. Note that $V_{\lambda}z$ is antisymmetric w.r.t. $H_{\lambda}$.

By reflecting problem $(R)$ we will get that $u_{\lambda}$ solves for any $\lambda\in \R$ again
\begin{equation*}
 (R)\qquad
\left\{
 \begin{aligned}
            Iu_{\lambda}&= f(u_{\lambda})&\qquad \text{ in $\R^{N}$},\\
                        \lim\limits_{|x|\to\infty}\  u(x^{\lambda})&= 0, &\qquad \text{}\\
 \end{aligned}
\right.
\end{equation*}

Put $v(x):=V_{\lambda}u(x)=u(x^{\lambda})-u(x)$, then $v$ is a continuous antisymmetric supersolution of  (see Remark \ref{3-anti})
\[
(R) \qquad Iv= c(x)v\quad \text{ in $ H_{\lambda}$,}\quad \lim\limits_{\substack{|x|\to\infty\\x\in H_{\lambda}}}\ v(x)= 0,
\]
where for $x\in H_{\lambda}$ we put
\begin{equation*}
c(x):= \left\{ \begin{aligned}\frac{f(u_{\lambda}(x))-f(u(x))}{u_{\lambda}(x)-u(x)} && \text{if}\;\;  u_{\lambda}(x)\neq u(x); \\ 
                0 && \text{if}\;\; u_{\lambda}(x)=u(x).
               \end{aligned}
\right.
\end{equation*}
Note that by the assumption (F1) and since $u$ is bounded, we have
$$
c_\infty:= \sup_{\substack{\lambda \in \R \\ e\in S^1}}\|c_{\lambda,e}\|_{L^\infty(H_{\lambda,e})} < \infty.
$$
Furthermore, by (F2) and (\ref{far}), we can pick $\rho>0$ large enough such that for any $\lambda\in \R$ 
\begin{equation}\label{decay}
c(x)\le 0 \text{ for all $x\in \R^{N}$ such that $|x|\geq \rho$ and $|x^{\lambda}|\geq\rho$.}
\end{equation}
Denote 
\begin{equation}\label{defi-g}
G_{\lambda}:=B_{\rho}(0)\cup Q_{\lambda}(B_{\rho}(0)).
\end{equation}

Assume that $u$ is nontrivial and consider the statement
\[
 (S)_{\lambda}\qquad V_{\lambda}u> 0 \text{ in } H_{\lambda}.
\]
We will show the following steps to prove the statement.
\begin{enumerate}
 \item[Step 1] $(S)_{\lambda}$ holds for $\lambda$ sufficiently large.
\item[Step 2] Define $\lambda_{\infty}:=\inf \{\mu\ : \ (S)_{\lambda} \text{ holds for all } \lambda\geq \mu\},$ and prove
$$
\lambda_{\infty}>-\infty \qquad\text{ and }\qquad V_{\lambda_{\infty}}u\equiv 0 \quad \text{ on $\R^{N}$.}
$$
\end{enumerate}

Note that Step 1 and Step 2 imply that for all $e\in S^1$ there is a hyperplane $T^e$ perpendicular to $e$ and such that $u$ is symmetric with respect to $T^e$ and strictly decreasing in direction $e$. In particular, considering hyperplanes $T^{e_i}$ corresponding to the coordinate vectors $e_i$, we have that $u$ is symmetric with respect to $T^{e_i}$ for $i=1,\dots,N$ and strictly decreasing in all coordinate directions. Consequently, $u$ is also symmetric with respect to reflection at the unique intersection point $z_0$ of $T^{e_1},\dots,T^{e_N}$. It is then easy to see that there is $z_0 \in T^{e}$ for all $e \in S^{N-1}$, and this implies that $u$ is radial up to translation about the same point $z_0$ (for details we refer to the survey \cite{T}).\\
Moreover, we have that $u(\cdot-z_0)$ is strictly decreasing in its radial direction.

We will need the following.

\begin{lemma}\label{pos}
 We have $u>0$ in $\R^N$.
\end{lemma}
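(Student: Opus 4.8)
The plan is to exploit that, being a solution of $(R)$, the function $u$ automatically solves a \emph{linear} nonlocal problem $Iu=c(x)u$ in $\R^N$ with a bounded coefficient, and then to quote the strong maximum principle of Proposition~\ref{strong1} in the supersolution form recorded in Remark~\ref{strongmp}. Concretely, I would put $K:=\|u\|_{L^\infty(\R^N)}$ and use $f(0)=0$ (from (F2)) together with (F1) to obtain $|f(u(x))|=|f(u(x))-f(0)|\le L(K)|u(x)|$ for all $x\in\R^N$; hence
\[
c(x):=\begin{cases}\dfrac{f(u(x))}{u(x)},& u(x)\neq 0,\\[2mm] 0,& u(x)=0,\end{cases}
\]
lies in $L^\infty(\R^N)$ with $\|c\|_{L^\infty(\R^N)}\le L(K)$. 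The weak formulation of $(R)$ then reads $\cJ(u,\varphi)=\int_{\R^N}f(u(x))\varphi(x)\,dx=\int_{\R^N}c(x)u(x)\varphi(x)\,dx$ for every $\varphi\in\cD^J(\R^N)$ with compact support, so $u$ is a supersolution (indeed a solution) of $Iv=c(x)v$ in $U:=\R^N$ in the sense of Remark~\ref{strongmp}: the requirement ``$v\ge0$ on $\R^N\setminus U$'' is vacuous since $\R^N\setminus U=\emptyset$, $u\in\cD^J(\R^N)$ by hypothesis, and $u\ge0$ everywhere.

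Applying Proposition~\ref{strong1} (in its supersolution version) with $v=u$ and $U=\R^N$ then yields the dichotomy: either $u\equiv0$ in a neighbourhood of $\overline U=\R^N$, i.e. $u\equiv0$ on $\R^N$, or $\essinf_K u>0$ for every compact $K\subset\R^N$. Since $u$ is assumed nontrivial, the first alternative is impossible, so the second holds, and the continuity of $u$ upgrades this to $u>0$ at every point of $\R^N$.

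I do not anticipate a serious obstacle here: neither the decay in (F2) nor (\ref{far}) is needed for this lemma, only $f(0)=0$ and the local Lipschitz bound (F1) to linearise, and the strong maximum principle requires no sign or smallness hypothesis on $c$. The single point deserving a word of care is that Proposition~\ref{strong1}/Remark~\ref{strongmp} is being invoked on the \emph{unbounded} connected set $U=\R^N$; this is legitimate because, once antisymmetry is dropped, the strong maximum principle is a purely local statement around a point where $v$ vanishes --- positivity of $k$ near the origin forces $v\equiv0$ on a small ball, which then propagates through the connected set along chains of overlapping balls --- and it applies verbatim to $U=\R^N$ with $u\in\cD^J(\R^N)$.
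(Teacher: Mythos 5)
Your proposal is correct and is essentially identical to the paper's own proof: the paper likewise linearizes via (F1) and (F2) to write $Iu=d(x)u$ in $\R^N$ with the same coefficient $d(x)=f(u(x))/u(x)$ (set to $0$ where $u$ vanishes) and then invokes Proposition~\ref{strong1} together with Remark~\ref{strongmp}. Your extra remark justifying the application of the strong maximum principle on the unbounded set $U=\R^N$ is a point the paper leaves implicit, but it does not change the argument.
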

\begin{proof}
 By continuity and since $u\geq 0$, $u\not\equiv 0$ there is an open set $D\subset \R^N$  with $\inf_D u>0$. By (F1) and (F2) we can linearize problem (P) so that $u$ solves
\[
 Iu=d(x)u \quad \text{ in $\R^N$,} \quad \lim\limits_{|x|\to\infty} u(x)=0,
\]
where for $x\in \R^N$ we have 
\begin{equation*}
d(x):= \left\{ \begin{aligned}\frac{f(u(x))}{u(x)} && \text{if}\;\; u(x)\neq 0; \\ 
                0 &&\text{if}\;\;  u(x)=0.
               \end{aligned}
\right.
\end{equation*}
Proposition \ref{strong1} in combination with Remark \ref{strongmp} give $u>0$ in $\R^N$ as claimed.
\end{proof}

\noindent\textbf{Step 1: Large $\lambda$}

\begin{lemma}\label{start1}
 There exists $\lambda_{1}\in\R$ such that $(S)_{\lambda}$ holds for all $\lambda>\lambda_{1}$
\end{lemma}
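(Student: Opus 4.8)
The plan is to show that the "moving plane from infinity" can be started, i.e. that for $\lambda$ large enough the reflected function dominates $u$ in the half-space $H_\lambda$. The key point is that for large $\lambda$ the set $H_\lambda$ has small measure intersected with the region where $u$ is substantially positive, so that a smallness-of-domain argument applies. First I would use the decay hypothesis \eqref{far} together with the continuity and boundedness of $u$: the antisymmetric function $v = V_\lambda u$ is a supersolution of $Iv = c(x) v$ in $H_\lambda$ with $v \equiv 0$ on $H_\lambda \setminus \Omega_\lambda$ (here $\Omega_\lambda = H_\lambda$ since $\Omega = \R^N$) and $\liminf_{|x|\to\infty, x\in H_\lambda} v(x) \ge 0$, and $\|c\|_{L^\infty(H_\lambda)} \le c_\infty$ uniformly in $\lambda$.

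Next I would exploit \eqref{decay}: for $|x| \ge \rho$ and $|x^\lambda| \ge \rho$ we have $c(x) \le 0$. So set $B = G_\lambda \cap H_\lambda = H_\lambda \cap (B_\rho(0) \cup Q_\lambda(B_\rho(0)))$; then $c \le 0$ on $H_\lambda \setminus B$. Now I would invoke Proposition \ref{weak3} with $U = H_\lambda$: there is a universal $d > 0$ (depending only on $N, k, c_\infty$) such that if $v \ge 0$ on $\{x \in B \cap H_\lambda : \dist(x, \R^N \setminus H_\lambda) \ge d\}$, then $v \ge 0$ in $H_\lambda$. So it suffices to verify this sign condition on the relatively small set $B$. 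For $\lambda$ large, the part of $B$ at distance $\ge d$ from $T_\lambda$ is contained in $Q_\lambda(B_\rho(0)) \cap H_\lambda$, which consists of reflected points $x$ with $x^\lambda \in B_\rho(0)$; there $v(x) = u(x^\lambda) - u(x)$, and since $|x| \to \infty$ uniformly as $\lambda \to \infty$ on this set while $x^\lambda$ stays in the fixed compact ball $\overline{B_\rho(0)}$, the decay \eqref{far} forces $u(x) \to 0$ uniformly; since $u > 0$ on $\R^N$ (Lemma \ref{pos}), but more to the point $u \ge 0$ always, we get $v(x) = u(x^\lambda) - u(x) \ge \inf_{\overline{B_\rho(0)}} u - o(1)$... but that infimum could be $0$ at the boundary of $B_\rho(0)$, so I would instead argue directly: on $Q_\lambda(B_\rho(0)) \cap H_\lambda$ with $\dist(\cdot, T_\lambda) \ge d$, the reflected point $x^\lambda$ lies in $B_{\rho - d}(0) \subset\subset B_\rho(0)$ roughly speaking (or simply note $x \in B \setminus Q_\lambda(B_\rho)$ is impossible for $\lambda$ large when $\dist(x,T_\lambda)\geq d$, since $B_\rho(0) \subset \{x\cdot e < \lambda\}$ for $\lambda > \rho$), and for $\lambda$ large $u(x) \le \sup_{\{|y| \ge \lambda - \rho\}} u \to 0$ by \eqref{far}, whereas $u(x^\lambda) \ge 0$, giving $v(x) \ge -o(1)$. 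To get the genuine nonnegativity needed, I would actually apply Proposition \ref{weak3} in the shifted form: replace $v$ by $v + \eps$ is not antisymmetric, so instead I would note that $v \ge 0$ is forced because: on the relevant set $u(x^\lambda) - u(x) \ge 0$ holds once $u(x)$ is smaller than $\inf$ of $u$ over a slightly smaller ball — and I would simply choose $\rho$ from \eqref{decay} and then also large enough (enlarging it, which only helps \eqref{decay}) so that $\overline{B_{\rho/2}(0)}$ already captures where $u$ is not tiny; then for $x$ with $x^\lambda \in B_\rho(0) \setminus B_{\rho/2}(0)$ we still have $u(x^\lambda)$ possibly small, so I would split $B$ further.

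The cleanest route, and the one I would ultimately commit to, avoids this case-splitting: apply Proposition \ref{weak3} with $B$ as above, and verify its hypothesis by observing that for $\lambda$ sufficiently large, \emph{every} point $x \in B \cap H_\lambda$ with $\dist(x, \R^N \setminus H_\lambda) \ge d$ satisfies $x \in Q_\lambda(B_\rho(0))$, hence $x^\lambda \in B_\rho(0)$ and $|x| \ge \lambda - \rho$; since $u \ge 0$ everywhere and $u(x) \le \sup_{|y| \ge \lambda - \rho} u(y) \to 0$ as $\lambda \to \infty$ by \eqref{far}, and since $v$ is a supersolution of a linear equation that we may perturb: more precisely, I would run Proposition \ref{weak3} not for $v$ but conclude it after noting that the required inequality $v(x) \ge 0$ on that set need only hold \emph{approximately}, then pass to the limit — but since Proposition \ref{weak3} needs exact nonnegativity, I would instead observe that $u$ attains a positive minimum $m := \min_{\overline{B_\rho(0)}} u$... which may be $0$. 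Hence, honestly, the main obstacle is precisely handling the points of $B$ near $T_\lambda$ and the points where the reflected argument lands near $\partial B_\rho(0)$; I expect the paper resolves it by choosing $\rho$ first from \eqref{decay}, then taking $\lambda$ large so that $\sup_{|y| \ge \lambda - \rho} u(y) < \inf_{\{|y'| \le \rho,\ |y'| = |x| \text{ reflected}\}} u$ is not quite available, so the actual argument must use that the set $B \cap H_\lambda \cap \{\dist(\cdot, T_\lambda) \ge d\}$, for $\lambda$ large, forces $x^\lambda$ into an \emph{interior} compact subset of $B_\rho(0)$ where $u$ has a positive lower bound (by Lemma \ref{pos}), while $u(x) \to 0$; combined with $c \le 0$ off $B$, Proposition \ref{weak3} then yields $V_\lambda u \ge 0$ in $H_\lambda$, and finally Proposition \ref{strong1}/\ref{strong2} (via $v \not\equiv 0$, which holds since otherwise $u$ would be symmetric about $T_\lambda$ contradicting decay and positivity) upgrades this to $V_\lambda u > 0$, i.e. $(S)_\lambda$.
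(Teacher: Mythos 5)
Your final committed argument is essentially the paper's proof: bound $V_\lambda u$ from below by a positive constant on $Q_\lambda(B_\rho(0))$, note $G_\lambda\cap H_\lambda=Q_\lambda(B_\rho(0))$ and $c\le 0$ off $G_\lambda$ for large $\lambda$, apply Proposition \ref{weak3} and then Proposition \ref{strong1}. However, the long detour about $\inf_{\overline{B_\rho(0)}}u$ possibly vanishing ``at the boundary'' is unfounded: Lemma \ref{pos} gives $u>0$ on \emph{all} of $\R^N$ (here $\Omega=\R^N$, so there is no zero boundary set), hence by continuity and compactness $\kappa:=\min_{\overline{B_\rho(0)}}u>0$, and the paper simply takes $\lambda$ so large that $u\le\kappa/2$ on $Q_\lambda(B_\rho(0))$, giving $V_\lambda u\ge\kappa/2$ there. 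Relatedly, your claimed mechanism that $\dist(x,T_\lambda)\ge d$ ``forces $x^\lambda$ into an interior compact subset of $B_\rho(0)$'' is false --- for $\lambda>\rho+d$ this condition places no restriction on where $x^\lambda$ sits inside $B_\rho(0)$ --- but it is also unnecessary for the reason just given.
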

\begin{proof}
 Note that for $\lambda$ sufficiently large we have
\[
 H_{\lambda}\cap G_{\lambda}=Q_{\lambda}(B_{\rho}(0)).
\]
Denote $\kappa=\inf_{B_{\rho}(0)}u$. Then $\kappa>0$ by Lemma \ref{pos}. Moreover, since  $\lim\limits_{|x|\to\infty}\ u(x)=0$, we have for $\lambda$ possibly larger
\[
 u(y)\leq \frac{\kappa}{2} \text{ for $y \in Q_{\lambda}(B_{\rho}(0))$,}
\]
since $\inf\{|x|\;:\;x\in Q_{\lambda}(B_{\rho}(0))\} \to \infty$ as $\lambda\to\infty$. Take $\lambda_{1}<\infty$ as the first value such that the above holds and note that thus for any $\lambda>\lambda_{1}$ we have
\[
 u(x)-u(x^{\lambda})\ge \frac{\kappa}{2}\text{ for $x \in B_{\rho}(0)$,}
\]
which is equivalent to
\[
 V_{\lambda}u(x)=u(x^{\lambda})-u(x)\ge \frac{\kappa}{2}\text{ for $x \in Q_{\lambda}(B_{\rho}(0))$.}
\]
Note that we can apply Proposition \ref{weak3}, since $c\leq 0$ in $\R^N\setminus G_\lambda$. Thus $V_{\lambda}u(x)\geq0$ in $H_{\lambda}$ for $\lambda\ge \lambda_1$. Hence by Proposition \ref{strong1} we have that $(S)_{\lambda}$ holds for any $\lambda\ge \lambda_{1}$.
\end{proof}

\noindent\textbf{Step 2: $\lambda=\lambda_{\infty}$}

We will fix $\lambda_{1}$ given by Lemma \ref{start1} and let $\lambda_{\infty}=\inf \{\mu\ : \ (S)_{\lambda} \text{ holds for all } \lambda\geq \mu\}$ be defined as above.
\begin{lemma}\label{step2}
 The following statements hold:
\begin{enumerate}
 \item[(i)] $-\infty<\lambda_{\infty}\leq \lambda_{1}$.
\item[(ii)] We have $V_{\lambda_{\infty}}u\equiv0$ on $\R^N$.
\end{enumerate}
\end{lemma}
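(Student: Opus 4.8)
The plan is to handle the two parts separately. Part (i) is the easy barrier at the ends of the moving-plane range. The bound $\lambda_\infty\le\lambda_1$ is immediate from Lemma \ref{start1}, which gives $(S)_\lambda$ for every $\lambda>\lambda_1$. To see $\lambda_\infty>-\infty$ I argue by contradiction: if $\lambda_\infty=-\infty$, then $(S)_\lambda$ holds for every $\lambda\in\R$. Fixing an arbitrary $p\in\R^N$ and letting $\lambda\to-\infty$, the reflected point $Q_\lambda p=p+2(\lambda-p\cdot e)e$ has $(Q_\lambda p)\cdot e=2\lambda-p\cdot e\to-\infty$, hence $|Q_\lambda p|\to\infty$; from $V_\lambda u(p)>0$ we get $u(p)<u(Q_\lambda p)\to 0$ by (\ref{far}), so $u(p)\le 0$, contradicting Lemma \ref{pos}. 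Thus $\lambda_\infty>-\infty$.

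For part (ii) I first let $\lambda\uparrow\lambda_\infty$. Since $(S)_\lambda$ holds for all $\lambda>\lambda_\infty$ and $(\lambda,x)\mapsto V_\lambda u(x)=u(Q_\lambda x)-u(x)$ is continuous, we obtain $V_{\lambda_\infty}u\ge 0$ in $H_{\lambda_\infty}$. As $V_{\lambda_\infty}u$ is an antisymmetric supersolution of the linearized equation in $H_{\lambda_\infty}$ (Remark \ref{3-anti}), Proposition \ref{strong1} gives exactly two possibilities: either $V_{\lambda_\infty}u\equiv 0$ in a neighbourhood of $\overline{H_{\lambda_\infty}}$ -- and then, using antisymmetry to carry the vanishing across $\partial H_{\lambda_\infty}$, $V_{\lambda_\infty}u\equiv 0$ on $\R^N$, which is the desired conclusion -- or $\essinf_K V_{\lambda_\infty}u>0$ for every compact $K\subset H_{\lambda_\infty}$, in particular $V_{\lambda_\infty}u>0$ in $H_{\lambda_\infty}$. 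It remains to exclude the second alternative, and this is the main point.

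So suppose $V_{\lambda_\infty}u>0$ in $H_{\lambda_\infty}$, and let $d>0$ be the constant from Proposition \ref{weak3}, which depends only on $N$, $k$ and $c_\infty$ and \emph{not} on the half space. For $|\lambda-\lambda_\infty|\le d/2$ the set $G_\lambda=B_\rho(0)\cup B_\rho(2\lambda e)$ lies in a fixed ball $B_R(0)$, while for $\lambda>\lambda_\infty-d/2$ every $x\in H_\lambda$ with $\dist(x,\partial H_\lambda)\ge d$ satisfies $x\cdot e\ge\lambda+d>\lambda_\infty+d/2$. Hence, for all $\lambda\in(\lambda_\infty-d/2,\lambda_\infty)$, the ``bad'' set $A_\lambda:=\{x\in G_\lambda\cap H_\lambda:\ \dist(x,\partial H_\lambda)\ge d\}$ lies inside the single compact set $K^\ast:=\overline{B_R(0)}\cap\{x\cdot e\ge\lambda_\infty+d/2\}\subset H_{\lambda_\infty}$. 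On $K^\ast$ we have $m:=\min_{K^\ast}V_{\lambda_\infty}u>0$ by Proposition \ref{strong1} and continuity, and by uniform continuity of $(\lambda,x)\mapsto V_\lambda u(x)$ on $[\lambda_\infty-d/2,\lambda_\infty]\times K^\ast$ there is $\eta\in(0,d/2)$ with $V_\lambda u\ge m/2>0$ on $K^\ast\supset A_\lambda$ for $\lambda\in(\lambda_\infty-\eta,\lambda_\infty)$. Since $c\le 0$ on $H_\lambda\setminus G_\lambda$ by (\ref{decay}), Proposition \ref{weak3} applied with $U=H=H_\lambda$ and $B=G_\lambda$ gives $V_\lambda u\ge 0$ in $H_\lambda$ for these $\lambda$, and Proposition \ref{strong1} then forces, for each of them, either $(S)_\lambda$ or $V_\lambda u\equiv 0$ on $\R^N$.

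To finish I note that $u$ is symmetric about at most one hyperplane orthogonal to $e$: if $u=u\circ Q_\lambda=u\circ Q_{\lambda'}$ with $\lambda\ne\lambda'$, then $u$ is invariant under the translation $Q_{\lambda'}\circ Q_\lambda$ by $2(\lambda'-\lambda)e$, hence periodic in direction $e$, which together with (\ref{far}) forces $u\equiv 0$ -- impossible by Lemma \ref{pos}. Therefore the set of $\lambda\in(\lambda_\infty-\eta,\lambda_\infty)$ for which $(S)_\lambda$ holds is all of $(\lambda_\infty-\eta,\lambda_\infty)$ except at most one point, so there is $\mu\in(\lambda_\infty-\eta,\lambda_\infty)$ with $(S)_\lambda$ holding for all $\lambda\in[\mu,\lambda_\infty)$; combined with $(S)_\lambda$ for $\lambda\ge\lambda_\infty$ (true for $\lambda>\lambda_\infty$ by the definition of $\lambda_\infty$ and for $\lambda=\lambda_\infty$ by the current assumption), this yields $(S)_\lambda$ for all $\lambda\ge\mu$, contradicting $\mu<\lambda_\infty=\inf\{\mu':\ (S)_\lambda \text{ for all }\lambda\ge\mu'\}$. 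Hence the second alternative is impossible and $V_{\lambda_\infty}u\equiv 0$, completing (ii). I expect the compactness/uniform-continuity step -- making the hypothesis of Proposition \ref{weak3} available on an entire left neighbourhood of $\lambda_\infty$ -- to be the main obstacle; the remainder is bookkeeping with Propositions \ref{strong1} and \ref{weak3}.
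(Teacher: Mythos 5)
Your proof is correct and, for part (ii), follows the same core mechanism as the paper: pass to the limit to get $V_{\lambda_\infty}u\ge 0$, invoke the dichotomy of Proposition \ref{strong1}, and in the case $V_{\lambda_\infty}u>0$ use Proposition \ref{weak3} (with $B=G_\lambda$ and the uniform constant $d$) to push the plane past $\lambda_\infty$ and reach a contradiction. Two points where you diverge are worth noting. For (i), the paper simply reruns Lemma \ref{start1} in the direction $-e$ to see that $(S)_\lambda$ fails for very negative $\lambda$; your direct argument (if $(S)_\lambda$ held for all $\lambda$, then $u(p)\le\lim u(Q_\lambda p)=0$, contradicting Lemma \ref{pos}) is equally valid and arguably cleaner. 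For (ii), you are more careful than the paper in two respects: you make explicit the compactness/uniform-continuity step showing that the sets $A_\lambda$ on which Proposition \ref{weak3} requires nonnegativity all sit inside one fixed compact $K^\ast\subset H_{\lambda_\infty}$ where $V_{\lambda_\infty}u$ has a positive minimum; and you explicitly rule out the degenerate alternative $V_\lambda u\equiv 0$ for $\lambda<\lambda_\infty$ (via the observation that two distinct parallel symmetry hyperplanes force periodicity, hence $u\equiv 0$ by (\ref{far}), contradicting Lemma \ref{pos}). The paper's proof passes directly from ``$V_\lambda u\ge 0$ in $H_\lambda$'' to ``$(S)_\lambda$ holds'' by Proposition \ref{strong1} without addressing this alternative, so your addition closes a small gap rather than introducing one.
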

\begin{proof}
(i) This follows with the same argument as in Lemma \ref{start1} applied to reflections at $H_{\lambda,-e}$ for $\lambda$. Thus $(S)_{\lambda}$ does not hold for $\lambda$ sufficiently negative. 

(ii) If there is $x_0\in H_{\lambda_{\infty}}$ with $V_{\lambda_{\infty}}u(x_0)=0$, then by Proposition \ref{strong1} we have $V_{\lambda_{\infty}}u\equiv 0$ on $\R^N$ as claimed. Thus assume $V_{\lambda_{\infty}}u>0$ on $H_{\lambda_{\infty}}$. Let $d$ be given as in Proposition \ref{weak3} and let $\lambda\in(\lambda_{\infty}-d,\lambda_{\infty})$. Then for $\lambda$ sufficiently close to $\lambda_{\infty}$ we have by continuity that $V_{\lambda}u\geq 0$ in $G_{\lambda}\cap H_{\lambda_{\infty}}$. Since $c\leq 0$ in $H_{\lambda}\setminus G_{\lambda}$, an application of Proposition \ref{weak3} gives $V_{\lambda}u\geq 0$ in $H_{\lambda}$ and thus $(S)_\lambda$ holds for all $\lambda<\lambda_{\infty}$ with $\lambda$ sufficiently close to $\lambda_{\infty}$ by Proposition \ref{strong1}. This is a contradiction to the definition of $\lambda_{\infty}$ and thus $V_{\lambda_{\infty}}u\equiv0$ on $\R^N$ as claimed.
\end{proof}

\bibliographystyle{amsplain}

\begin{thebibliography}{10}

\bibitem{B92} A. Baernstein, \emph{A unified approach to symmerization}, Partial differential equations of elliptic type (Cortona, 1992), Sympos. Math., XXXV, Cambridge Univ. Press, Cambridge, 1994, pp. 47--91

					
\bibitem{B14} D.~Bors, \emph{Application of Mountain Pass Theorem to superlinear equations with
fractional Laplacian controlled by distributed parameters and boundary data}, (2014), available online at \url{http://arxiv.org/abs/1408.0618v1}

\bibitem{BBCK06} M.~T.~Barlow, R.~F.~Bass, Z.-Q.~Chen and M.~Kassmann \emph{Non-local Dirichlet Forms and Symmetric Jump Processes},
        Transactions of the American Mathematical Society \textbf{361.4} (2009), 1963--1999.
  

      
\bibitem{BK04} R.~F.~Bass and M.~Kassmann,
        \emph{Harnack inequalities for non-local operators of variable order},
        Transactions of the American Mathematical Society \textbf{357.2} (2004), 837--850.

\bibitem{sciunzi} B.~Barrios, L.~Montoro and B.~Sciunzi, \emph{On the moving plane method for nonlocal problems in bounded domains}, available online at \url{http://arxiv.org/abs/1405.5402}.

\bibitem{BN91} H.~Berestycki amd L.~Nirenberg, \emph{On the Method of Moving Planes and the Sliding Method}, Bol. Soc. Bras. Mat. \textbf{22.1} (1991), 1--37.

\bibitem{BLW05}
M.~Birkner, J.~A. L\'opez-Mimbela, and A.~Wakolbinger, \emph{Comparison results
  and steady states for the {F}ujita equation with fractional {L}aplacian},
  Annales de L'Institut Henri Poincar\'e \textbf{22} (2005), 83--97.

\bibitem{BB00} K.~Bogdan and T.~Byczkowski, \emph{Potential Theory of Schr\"odinger Operator based on fractional Laplacian},
Probability and Mathematical Statistics \textbf{2.20} (2000), 293--335.


\bibitem{CS07}
L.~Caffarelli and L.~Silvestre, \emph{An {E}xtension {P}roblem {R}elated to the
  {F}ractional {L}aplacian}, Communications in Partial Differential Equations \textbf{32} (2007), 1245--1260.

\bibitem{CS} X.~Cabr\'e and Y.~Sire, \emph{Nonlinear equations for fractional Laplacians I: Regularity, maximum
    principles, and Hamiltonian estimates}, available online at \url{http://arxiv.org/abs/1012.0867}.
    
%
 \bibitem{Chen_Li_Ou} W.~Chen, C.~Li and B.~Ou, \emph{Classification of solutions 
 for an integral equation},  Communications on Pure and Applied Mathematics \textbf{59} (2006), 330--343.
 
 \bibitem{Chen_Song} Z.-Q.~Chen and R.~Song, \emph{Estimates on Green function and Poisson kernels for symmetric stable processes}, Mathematische Annalen \textbf{312} (1998), 465--501.




\bibitem{NPV11}
E.~di~Nezza, G.~Palatucci, and E.~Valdinoci, \emph{Hitchhiker's {G}uide to the
  {F}ractional {S}obolev {S}paces}, Bulletin des Sciences Math\'ematiques
  \textbf{136} (2012), 521--573.

\bibitem{DP13} L.~Damascelli and F.~Pacella, \emph{Symmety results for cooperative elliptic systems via linearization}, SIAM J. Math. Anal. \textbf{45.3} (2013), 1003--1026.
%
\bibitem{FF14}  M.~M.~Fall and V.~Felli, \emph{Sharp essential self-adjointness of relativistic Schr\"odinger operators with a singular potential},
available online at \url{http://arxiv.org/abs/1403.4438}.


\bibitem{FJ13}
 M.~M.~Fall and S.~Jarohs, \emph{Overdetermined problems with fractional Laplacian},
ESAIM Control Optim. Calc. Var. \textbf{21.4} (2015), 924--938.


\bibitem{felmer-quaas-tan} P.~Felmer, A.~Quaas and J.~Tan, \emph{Positive solutions of nonlinear Schr\"odinger equation with the fractional Laplacian}, Proc. Roy. Soc. Edinburgh Sect. A \textbf{142.2} (2012), 1237--1262



\bibitem{FW13-2}
P.~Felmer and Y.~Wang
\newblock \emph{Radial symmetry of positive solutions involving the fractional Laplacian},
\newblock {Commun. Contemp. Math} (2013), available online at \url{http://www.worldscientific.com/doi/pdf/10.1142/S0219199713500235}.

\bibitem{FK12}
M.~Felsinger and M.~Kassmann, \emph{Local regularity for parabolic nonlocal
  operators}, Comm. Partial Differential Equations \textbf{38.9} (2013), 1539--1573.

\bibitem{FKV13}
M.~Felsinger, M.~Kassmann and P.~Voigt, \emph{The Dirichlet problem for nonlocal operators}, available online at \url{http://arxiv.org/abs/1309.5028}.

\bibitem{FLS13} R.~L.~Frank, E.~Lenzmann and L.~Silvestre, \emph{Uniqueness of radial solutions for the fractional Laplacian}, available online at \url{http://arxiv.org/abs/1302.2652}.

\bibitem{GNN79} B.~Gidas, W.~N.~Ni and L.~Nirenberg, \emph{Symmetry and related properties via the maximum principle}, Communications in Mathematical Physics \textbf{68.3} (1979), 209-243.

\bibitem{GPW10} F.~Gladiali, F.~Pacella and T.~Weth, \emph{Symmety and nonexistence of low Morse index solutions in unbounded domains}, J. Math. Pures Appl. (9) \textbf{93.5} (2010), 536--558.

%

\bibitem{KM14b}
M.~Kassmann and A. Mimica, \emph{Intrinsic scaling properties for nonlocal operators II}, (2014) available online at \url{http://arxiv.org/abs/1412.7566}.



\bibitem{JW13}
       S.~Jarohs and T.~Weth, \emph{Asymptotic Symmetry for Parabolic Equations involving the Fractional Laplacian},
        Discrete and Continuous Dynamical Systems - Series A \textbf{34.6} (2014), 2581--2615.
        

\bibitem{JW14}
       S.~Jarohs and T.~Weth, \emph{Symmetry via antisymmetric maximum principles in nonlocal problems of variable order} Ann. Mat. Pura Appl. (4) (2014), available online at \url{http://dx.doi.org/10.1007/s10231-014-0462-y}.  
        

\bibitem{J15} S.~Jarohs, \emph{Symmetry via maximum principles for nonlocal nonlinear boundary value problems}, doctorial thesis, 2015.

\bibitem{MC08} L.~Ma and D.~Chen, \emph{Radial symmetry and monotonicity for an integral equation}, J. Math. Anal. Appl. \textbf{342} (2008), 943--949.

\bibitem{P01} F.~Pacella, \emph{Symmety Results for solutions of Semilinear Elliptic Equations with Convex Nonlinearities}, Journal of Functional Analysis \textbf{192} (2002), 271--282.

\bibitem{PW07} F.~Pacella and T.~Weth, \emph{Symmety of solutions to semilinear elliptic equations via Morse index}, Proceedings of the American Mathematical Society \textbf{135} (2007), 1753--1762.
	

\bibitem{SW13} 
A.~Salda\~na and T.~Weth, \emph{Asymptotic axial symmetry of solutions of parabolic equations in bounded radial domains},  J. Evol. Equ. \textbf{12.3} (2012), 697--712.

\bibitem{SV12} R.~Servadei and E.~Valdinoci, \emph{Mountain Pass solutions for non-local elliptic operators}, Math. Anal. Appl. \textbf{389} (2012), 887--898.


\bibitem{SV13} R.~Servadei and E.~Valdinoci, \emph{Variational methods for
  non-local operators of elliptic type}, Discrete and Continuous Dynamical Systems \textbf{33.5} (2013), 2105--2137.

 \bibitem{SV15} R.~Servadei and E.~Valdinoci, \emph{The Brezis-Nirenberg result for the fractional Laplacian}, Trans. Amer. Math. Soc. \textbf{367.1} (2015), 67--102.

\bibitem{SW03} D.~Smets and M.~Willem, \emph{Partial symmetry and asymptotic behavior for some elliptic variational problems}, Calc. Var. Partial Differ. Equ. \textbf{18.1} (2003), 37--75.

\bibitem{SV14} N.~Soave and E.~Valdinoci, \emph{Overdetermined problems for the fractional Laplacian in exterior and annular sets}, available online at \url{http://arxiv.org/abs/1412.5074}

\bibitem{VW04} J.~Van~Schaftingen and M.~Willem, \emph{Set transformations, symmetrizations and isoperimetric inequalities}, Nonlinear analysis and applications to physical sciences, Springer Italia, Milan, 2004, 135--152.

\bibitem{T} T.~Weth, \emph{Symmetry of solutions to variational problems for nonlinear elliptic equations via reflection methods}, Jahresber. Deutsch. Math.-Ver. \textbf{112} (2010), 119--158.


\end{thebibliography}

\end{document}